\documentclass[12pt]{article}
	
	\usepackage[margin=1in]{geometry}  
	\usepackage{graphicx}              
	\usepackage{amsmath}               
	\usepackage{amsfonts}              
	\usepackage{amsthm}                
	\usepackage{mathtools}
	\usepackage{xcolor}
	\usepackage{hyperref}
	\usepackage{tikz-cd}
    \usepackage{amssymb}
	
	\newtheorem{thm}{Theorem}[section]
	\newtheorem{lem}[thm]{Lemma}
	\newtheorem{prop}[thm]{Proposition}
	\newtheorem{cor}[thm]{Corollary}
	
	\newtheorem{defn}[thm]{Definition}
	
	\newtheorem{remark}[thm]{Remark}

	\newcommand{\RR}{\mathbb{R}}      

    \newcommand{\Ga}{\mathrm{G}}
    \newcommand{\Pol}{\operatorname{Pol}}

\begin{document}

\title{Geodesics for generalised Monge-Amp\`{e}re equations}

\author{Gao Chen, Kartick Ghosh, and Sijie Nie}
\maketitle

\begin{abstract}
  Xiuxiong Chen proved the existence of $C^{1, \bar 1}$ geodesics in the space of K\"{a}hler potentials and the convexity of the 
  $J$-functional along $C^{1, \bar 1}$ geodesics. Analogous results were proved by Collins and Yau for the hypercritical 
  Leung--Yau--Zaslow equation. 
  In this paper, we study a similar picture for generalised Monge--Amp\`{e}re equations associated with two right-Noetherian polynomials in proper position.
\end{abstract}

\tableofcontents 

\section{Introduction}

The story of geodesics in the space of K\"ahler potentials starts with Xiuxiong Chen's breakthrough paper \cite{ChenKahlerMetrics}, which proves the existence of $C^{1,\bar 1}$ geodesics connecting two K\"ahler potentials. The later work of Chu, Tosatti, and Weinkove \cite{ChuTosattiWeinkoveGeodesics} proves the full $C^{1,1}$ estimate. The counterexample of Darvas and Lempert \cite{DarvasLempertWeakGeodesics} proves that the $C^{1,1}$ estimate cannot be improved to $C^2$. In another direction, Xiuxiong Chen studied the J-equation proposed by Donaldson \cite{DonaldsonMomentMaps} from the viewpoint of K\"ahler geometry \cite{ChenMabuchiEnergy}. Its solutions are critical points of the J-functional, which is convex along geodesics.

From the point of view of mirror symmetry, the J-equation is also the large-radius limit of the deformed Hermitian--Yang--Mills equation proposed by Leung, Yau, and Zaslow \cite{LeungYauZaslowFourierMukai}. We call this the LYZ equation in this paper. In analogy with Xiuxiong Chen's theory, Collins and Yau \cite{CollinsYauGeodesics} study geodesics on a connected component of the space $\{\varphi: \operatorname{Re}(e^{-\sqrt{-1}\theta}(\omega+\sqrt{-1}\chi_\varphi)^n)>0\}$ and prove the $C^{1,\bar 1}$ estimate. Collins and Yau \cite{CollinsYauGeodesics} also study the functional for the hypercritical LYZ equation $\operatorname{Im}(e^{-\sqrt{-1}\theta}(\omega+\sqrt{-1}\chi_\varphi)^n)=0$, and they prove that this functional is convex along their geodesics. The estimate for the geodesic was improved to $C^{1,1}$ by Chu, Collins, and Lee \cite{ChuCollinsLeeAlmostCalibrated}. 

The J-equation is also used in the continuity path proposed by Xiuxiong Chen \cite{ChenCscKContinuityPath} to study constant scalar curvature K\"ahler metrics. The breakthrough works of Xiuxiong Chen and Jingrui Cheng \cite{ChenChengCscK1, ChenChengCscK2} and Berman-Darvas-Lu \cite{BermanDarvasLu} prove that, after normalizing potentials and accounting for holomorphic automorphisms, the existence of cscK metrics is equivalent to the coercivity of Mabuchi's K-energy functional \cite{MabuchiKEnergy} with respect to the $d_1$ distance on the $\mathcal{E}^1$ space introduced in \cite{GuedjZeriahiFiniteEnergy, DarvasFiniteEnergyGeometry, DarvasMabuchiCompletion}. For the J-equation, a properness result is proved in \cite{CollinsGabor}. For the hypercritical LYZ equation, the theory of $\mathcal{E}^1$ spaces is established in \cite{ChuCollinsLeeAlmostCalibrated}, \cite{ChuLeeHypercriticalDHYM}, and the analogue of the Chen--Cheng result is proved in \cite{ChuLeeHypercriticalDHYM}.

In this paper, we study more general equations. Using Lin's terminology \cite{LinInverseSigmaConvexity}, we call the associated polynomials right-Noetherian. In particular, our theory includes the supercritical, but not necessarily hypercritical, phase of the LYZ equation. More recently, Fang and Ma \cite{FangMaGarding} extended Lin's theory to a larger class of polynomials. Using Lin's estimates \cite{LinInverseSigmaSolvability} and Fang and Ma's theory \cite{FangMaGarding}, we study two right-Noetherian polynomials $f_1$ and $f_2$ such that $f_2\prec f_1$ in the sense of \cite{FangMaGarding}.

Our first main theorem is the following:
\begin{thm}
\label{MainThm1}
Let $(X,\omega)$ be a compact K\"ahler manifold and let $\chi$ be a real closed smooth $(1,1)$-form. Set $c_n^{(2)}=1$, $c_{n-1}^{(2)}=0$ and let \[f_2=\sum_{k=0}^{n}c_k^{(2)}\binom{n}{k}x^k, \quad g_2(\lambda)=\sum_{k=0}^{n}c_k^{(2)}\sigma_k(\lambda)\] be a right-Noetherian polynomial and its polarization, and let
\[
\mathcal{H}=\{\varphi\in C^{\infty}(X,\mathbb{R}): \chi_\varphi=\chi+\sqrt{-1}\partial\bar\partial\varphi\in \Upsilon_{g_2}(\omega)\}
\]
be the space of admissible potentials. Set $\mathcal{A}=\{z_0\in\mathbb{C}:1<\lvert z_0\rvert<e\}$ and $t=\log\lvert z_0\rvert$. Then, for any $\varphi_0, \varphi_1\in \mathcal{H}$ and any sufficiently small $\epsilon>0$, there exists a smooth $S^1$-invariant $\epsilon$-geodesic $\varphi_{\epsilon}$ on $\overline{\mathcal{X}}=X\times\overline{\mathcal{A}}$. Moreover, $\varphi_{\epsilon}$ is uniformly bounded and $\varphi_{\epsilon_1}\ge\varphi_{\epsilon_2}$ for all $0<\epsilon_1\le\epsilon_2$.
\end{thm}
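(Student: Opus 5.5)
We follow the strategy of Chen and of Collins--Yau: reformulate the geodesic problem as a degenerate Dirichlet problem on the product $\overline{\mathcal{X}}=X\times\overline{\mathcal{A}}$, perturb it to a nondegenerate one with right-hand side $\epsilon$, and solve that by the method of continuity using Lin's estimates \cite{LinInverseSigmaSolvability} and the G\r{a}rding-type theory of Fang and Ma \cite{FangMaGarding}.

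\textbf{Setting up the equation.} Let $\pi:\overline{\mathcal{X}}\to X$ be the projection. For an $S^1$-invariant function $\Phi(x,z_0)=\varphi(x,\log|z_0|)$, consider the $(1,1)$-form $\pi^*\chi+\sqrt{-1}\partial\bar\partial\Phi$ on $\overline{\mathcal{X}}$. The normalisation $c_n^{(2)}=1$, $c_{n-1}^{(2)}=0$ is what makes the relevant quantity well defined. With $\Omega=\pi^*\omega$, which is degenerate in the $\mathcal{A}$-direction, the operator
\[
\sum_k c_k^{(2)}\binom{n+1}{k}\,(\pi^*\chi+\sqrt{-1}\partial\bar\partial\Phi)^{k}\wedge\Omega^{n+1-k}
\]
expands in the $dz_0\wedge d\bar z_0$ direction to (a positive multiple of) $\big(\ddot\varphi-\text{quadratic term in }\nabla\dot\varphi\big)$ times the linearisation of $g_2$ at $\chi_\varphi$. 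Because $c_{n-1}^{(2)}=0$, no lower-order term appears.

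An \emph{$\epsilon$-geodesic} is then a solution of
\[
G(\Phi)=\epsilon\,\Omega_0^{n+1}\ \text{on }\mathcal{X},\qquad \Phi|_{|z_0|=1}=\varphi_0,\qquad \Phi|_{|z_0|=e}=\varphi_1,
\]
where $\Omega_0=\pi^*\omega+\sqrt{-1}dz_0\wedge d\bar z_0$ is a genuine Kähler form. We also require admissibility: $\chi_{\varphi(\cdot,t)}\in\Upsilon_{g_2}(\omega)$ and positivity of the time part. I would first verify, using the right-Noetherian property and the fact that $f_2\prec f_1$ gives concavity and ellipticity of the cone in the sense of \cite{FangMaGarding}, that this equation is elliptic and concave on the admissible cone in dimension $n+1$.

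\textbf{Subsolution, uniqueness of boundary data, $C^0$ bound and monotonicity.}
\begin{itemize}
\item \emph{Subsolution.} Take the interpolation $\underline\Phi=(1-t)\varphi_0+t\varphi_1+A(t^2-t)$ with $A\gg1$. Since $\mathcal{H}$ is convex (the cone $\Upsilon_{g_2}$ is convex), the slices $\chi_{(1-t)\varphi_0+t\varphi_1}$ are admissible. For large $A$ the time component dominates, so $G(\underline\Phi)\ge \epsilon_0\Omega_0^{n+1}$ for some $\epsilon_0>0$. This makes $\underline\Phi$ a subsolution for every $\epsilon\le\epsilon_0$; this is the meaning of ``sufficiently small $\epsilon$''.
\item \emph{Upper barrier.} The linear interpolation $(1-t)\varphi_0+t\varphi_1$, corrected by $-A'(t^2-t)$ if necessary, or more simply $\max(\varphi_0,\varphi_1)+C$ adjusted to the boundary data, gives a supersolution. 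Its existence comes from the comparison principle for degenerate-elliptic concave operators.
\item \emph{Uniform bound and monotonicity.} The comparison principle applied to the equation with different right-hand sides gives $\underline\Phi\le\varphi_\epsilon\le\overline\Phi$, which is the uniform bound. It also gives $\varphi_{\epsilon_1}\ge\varphi_{\epsilon_2}$ for $\epsilon_1\le\epsilon_2$, since a larger right-hand side yields a smaller solution; $\varphi_{\epsilon_2}$ is a subsolution of the $\epsilon_1$ problem.
\end{itemize}

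\textbf{Solvability.} I would run the continuity method $G(\Phi_s)=((1-s)G(\underline\Phi)/\Omega_0^{n+1}+s\epsilon)\Omega_0^{n+1}$, which starts at $\Phi_0=\underline\Phi$. $S^1$-invariance is preserved by uniqueness. Openness follows from ellipticity and the implicit function theorem with Dirichlet data. Closedness needs $C^{2,\alpha}$ estimates depending on $\epsilon$:
\begin{enumerate}
\item The gradient estimate, from the barrier argument on the boundary and a maximum principle in the interior.
\item The second-order estimate, where boundary $C^2$ estimates à la Guan and Caffarelli--Kohn--Nirenberg--Spruck use the subsolution $\underline\Phi$, and the interior estimate follows Lin's approach for inverse-$\sigma$ type equations, with the $\mathcal{C}$-subsolution structure supplied by $f_2\prec f_1$.
\item Evans--Krylov for the concave operator, then Schauder theory.
\end{enumerate}

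\textbf{Main obstacle.} I expect the hard part to be the boundary double-normal estimate and the interior $C^2$ estimate on the manifold with boundary $\overline{\mathcal{X}}$ for this general polynomial operator. The form $\pi^*\omega$ is degenerate, so ellipticity must be derived from the time-direction positivity together with the right-Noetherian structure. I would first try to reduce the equation to a $\mathcal{C}$-subsolution framework (Székelyhidi-type) on the product and check the required cone-asymptotic condition via Fang--Ma's results.
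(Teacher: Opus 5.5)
There is a genuine gap where the theorem has its real content: the a priori estimates that close your continuity method. Your subsolution, upper barrier and monotonicity arguments are fine for the equation you wrote down. The problem is that your regularization $G(\Phi)=\epsilon\,\Omega_0^{n+1}$ keeps the degenerate form $\pi^*\omega$ inside the operator. (Also, the coefficients should be $\tilde c_k=c^{(2)}_{k-1}$, not $c^{(2)}_k$.) By the computation of Section 3 and a rank-one computation, $G(\Phi)$ is, up to a positive factor,
$\ddot\varphi\, g_2\bigl(\chi_{\varphi_t}-\ddot\varphi^{-1}\sqrt{-1}\partial\dot\varphi\wedge\bar\partial\dot\varphi\bigr)$. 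That is, it is the $dz_0\,d\bar z_0$-entry times $g_2$ of a Schur complement taken with respect to the product splitting of $T\mathcal{X}$.

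Unless $f_2=x^n$, this is not a symmetric function of the eigenvalues of $\chi_\Phi$ relative to any fixed K\"ahler form. Consequently, none of the following apply as stated:
\begin{itemize}
\item Lin's estimates in \cite{LinInverseSigmaSolvability};
\item a Sz\'ekelyhidi-type $\mathcal{C}$-subsolution theory;
\item the usual maximum-principle argument for $\lambda_1$.
\end{itemize}
The ellipticity and concavity of your operator are also only asserted. You call the $C^2$ estimates the ``main obstacle'' without supplying a mechanism. Likewise, a gradient bound via ``a maximum principle in the interior'' is not available for this class of equations. Finally, $f_2\prec f_1$ cannot supply concavity or a subsolution structure, because $f_1$ does not appear in this theorem at all.

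The missing idea is to regularize the background form instead of the right-hand side. Replace $\pi_X^*\omega$ by the K\"ahler form $\omega_\epsilon=\pi_X^*\omega+\epsilon^2\sqrt{-1}dz_0\wedge d\bar z_0$ in every term. Then add a constant $\tilde c_0$ so negative that $\tilde f_2$ is strictly right-Noetherian (Lemma~\ref{IntegralUpsilonStable}). The $\epsilon$-geodesic equation is then literally Lin's equation $\tilde g_2(\lambda(\omega_\epsilon^{-1}\chi_\Phi))=0$ on $\Upsilon^1_{\tilde g_2}$, over an $(n+1)$-dimensional K\"ahler manifold with boundary. It differs from the geodesic equation only by the nonnegative term in Lemma~\ref{geodesic lemma}. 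This specific equation is what ``$\epsilon$-geodesic'' means in the statement, so even a completed version of your argument would prove an analogous but different result.

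The paper then proceeds as follows:
\begin{itemize}
\item It deforms the coefficients by $\tilde c_k(s)=s^{n+1-k}\tilde c_k$ back to Chen's equation. The path stays in Lin's class with $\Upsilon_{\tilde g_2}\subset\Upsilon_{\tilde g_{2,s}}$ (Lemma~\ref{PathRightNoeth}), so Lin's Lemma 3.7 and his bounds (3.13), (3.19) hold uniformly in $s$.
\item The normalization $c^{(2)}_{n-1}=0$ does not serve to kill lower-order terms. Via Lemma~\ref{positivity}, it makes admissible forms K\"ahler.
\item Together with Lemma~\ref{rewrite-equation}, this makes $\{\lambda_1\le N\}\cap\partial\Upsilon_{\tilde g_{2,s}}$ a compact subset of $\Upsilon^1_{\tilde g_{2,s}}$, which is what the boundary barrier needs.
\item The interior bound follows Lin's Theorem 3.1.
\item The gradient bound comes from a blow-up argument and Liouville for bounded plurisubharmonic functions.
\item Evans--Krylov applies through Lin's convexity.
\end{itemize}
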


We have discussed the $C^{1,\bar 1}$ estimate with ChatGPT 5.6 Sol \[
||\sqrt{-1}D\bar{D}\varphi_\epsilon||_{L^{\infty}(X\times \mathcal{A})}\le C,
\] where the norm is measured in any fixed K\"ahler metric on $\mathcal{X}$. AI claims that this can be proved using the spatial eigenvalue method of \cite{CollinsYauGeodesics} if an algebraic lemma holds. It has tested that lemma numerically and cannot find any counterexample, but it only claims the proof of the lemma for the supercritical LYZ equation. Since its claim is long and hard to read, we will not present anything here, and we still view the $C^{1,\bar 1}$ estimate as an open problem.

However, even without the $C^{1,\bar 1}$ estimate, we can define the weak geodesic $\varphi_t$ in the sense of Bedford-Taylor \cite{BedfordTaylorCapacity}, and define the mixed Aubin-Mabuchi energy. Our second main theorem proves that the $J$-functional for $f_1$ is convex along geodesics for $f_2$, and the $I$-functional for $f_2$ is affine along geodesics for $f_2$.
\begin{thm}
\label{MainThm3}
Let $f_1$ and $f_2$ be two right-Noetherian polynomials satisfying $f_2\prec f_1$. If $I_2$ is the functional associated with $f_2$, and $J_1$ is the functional associated with $-f_1$, then $J_1(t)$ is Lipschitz and convex, and $I_2(t)$ is affine along the geodesic $\varphi_t$ connecting any two potentials in $\mathcal{H}$.
\end{thm}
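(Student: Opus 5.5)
We will follow the Chen / Collins--Yau strategy. First we work with the smooth $S^1$-invariant $\epsilon$-geodesics $\varphi_\epsilon$ of Theorem \ref{MainThm1}. Then we pass to the limit $\epsilon\to 0$. By the monotonicity and uniform boundedness in Theorem \ref{MainThm1}, $\varphi_\epsilon$ increases to the weak (Bedford--Taylor) geodesic $\varphi_t$.

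\textbf{Step 1: derivative formulas.} Write the mixed Aubin--Mabuchi type energies through their first variation:
\[
\frac{d}{dt}I_2(\varphi_t)=\int_X \dot\varphi\, g_2(\chi_{\varphi})\,\omega^n,
\qquad
\frac{d}{dt}J_1(\varphi_t)=-\int_X \dot\varphi\, g_1(\chi_\varphi)\,\omega^n,
\]
with $g_i(\chi_\varphi)\omega^n$ interpreted as $\sum_k c_k^{(i)}\binom{n}{k}\chi_\varphi^k\wedge\omega^{n-k}$. We then compute the second derivative along a smooth path. Since $\chi$ and $\omega$ are closed, integration by parts gives the standard formulas
\[
\ddot I_2=\int_X\Bigl(\ddot\varphi\, g_2 - \sum_k c_k^{(2)}\tbinom{n}{k}k\sqrt{-1}\partial\dot\varphi\wedge\bar\partial\dot\varphi\wedge\chi_\varphi^{k-1}\wedge\omega^{n-k}\Bigr).
\]
The point is that this integrand is exactly the $(n+1)$-form $\Pi_2:=\sum_k c^{(2)}_k\binom{n}{k}\,\tilde\chi^{k}\wedge\omega^{n-k}$ on $\mathcal{X}$, up to the factor $\sqrt{-1}dz\wedge d\bar z/|z|^2$. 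Here $\tilde\chi=\chi+\sqrt{-1}D\bar D\varphi$ is formed on $X\times\mathcal{A}$, and we only need the terms of degree $n+1$, i.e.\ $\binom{n+1}{k}$-type mixing. So $\ddot I_2\,dt\wedge\dots$ equals $\pi_*$ of the geodesic operator. Along an $\epsilon$-geodesic this operator is $\epsilon$ times a fixed positive form, so $\ddot I_2(\varphi_\epsilon)=O(\epsilon)$.

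The same computation for $f_1$ gives $\ddot J_1=-\pi_*(\Pi_1)$. We must show $-\Pi_1\ge -C\epsilon$ whenever $\Pi_2=\epsilon\cdot(\mathrm{vol})$ and $\tilde\chi|_X\in\Upsilon_{g_2}$.

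\textbf{Step 2: the algebraic inequality (main obstacle).} Pointwise, diagonalize $\tilde\chi|_X$ with respect to $\omega$ with eigenvalues $\lambda$, and write $\tilde\chi$ in block form with the $t$-direction entry $a$ and mixed vector $v$. Then $\Pi_i/\mathrm{vol}=a\,g_i(\lambda)-\sum_j |v_j|^2\,\partial_{\lambda_j}g_i(\lambda)$, where each coefficient $\partial_{\lambda_j}g_2>0$ on $\Upsilon_{g_2}$. Eliminating $a$ using $\Pi_2=\epsilon$ gives
\[
-\Pi_1=\frac{-g_1}{g_2}\,\epsilon+\sum_j|v_j|^2\Bigl(\partial_j g_1-\frac{g_1}{g_2}\partial_j g_2\Bigr)
= \frac{-g_1}{g_2}\,\epsilon-g_2\sum_j|v_j|^2\,\partial_j\Bigl(\frac{g_1}{g_2}\Bigr).
\]
Convexity therefore reduces to showing $\partial_{\lambda_j}(-g_1/g_2)\ge0$ on $\Upsilon_{g_2}$, together with boundedness of $g_1/g_2$ on the compact set of eigenvalues occurring. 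We expect this monotonicity, i.e.\ that $g_1/g_2$ is decreasing in each $\lambda_j$, to follow from $f_2\prec f_1$ (proper position/interlacing of roots) and the Fang--Ma G\aa rding theory. Concretely, the plan is to restrict to a line in direction $e_j$, where both polarizations become real-rooted with interlacing roots; interlacing implies that the Wronskian $g_2\,\partial g_1-g_1\,\partial g_2$ has a fixed sign. This is the step we expect to require the most care, including verifying the sign conventions that make the $-f_1$ normalisation correct.

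\textbf{Step 3: limit.} For each $\epsilon$ we obtain $\ddot J_1(\varphi_\epsilon)\ge -C\epsilon$ and $|\ddot I_2(\varphi_\epsilon)|\le C\epsilon$. The constant $C$ depends on uniform bounds for $\varphi_\epsilon$, together with the fact that the eigenvalue bounds enter only through $\int \pi_*$ of positive forms, which are controlled cohomologically. Lipschitz bounds in $t$ follow from $|\dot\varphi_\epsilon|\le C$. This bound comes from $t$-convexity of $\varphi_\epsilon$ (admissibility makes $\varphi_\epsilon$ subharmonic in the $\mathcal{A}$-direction) and the barrier comparison with the boundary data, using the uniform bound from Theorem \ref{MainThm1}. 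The energies are continuous along monotone bounded convergence by Bedford--Taylor continuity of mixed Monge--Amp\`ere measures. Hence $J_1(\varphi_{\epsilon,t})\to J_1(\varphi_t)$ and $I_2(\varphi_{\epsilon,t})\to I_2(\varphi_t)$ pointwise in $t$. Since limits of $C\epsilon$-almost-convex (resp.\ almost-affine) uniformly Lipschitz functions are convex (resp.\ affine) and Lipschitz, the theorem follows.
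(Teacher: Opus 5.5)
Your overall architecture matches the paper's: second variation along the smooth $\epsilon$-geodesics, a pointwise algebraic reduction, then Bedford--Taylor continuity and the Lipschitz bound from $|\dot\varphi_\epsilon|\le C$. However, Step 2 has a sign error that turns your key lemma into a false statement. Since $g_2\,\partial_j(g_1/g_2)=\partial_jg_1-\frac{g_1}{g_2}\partial_jg_2$, your (correct) first expression actually equals
\[
-\Pi_1=-\frac{g_1}{g_2}\,\epsilon+g_2\sum_j|v_j|^2\,\partial_j\Bigl(\frac{g_1}{g_2}\Bigr).
\]
So convexity requires $g_1/g_2$ to be \emph{nondecreasing} in each $\lambda_j$ on $\Upsilon_{g_2}$. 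The monotonicity you set out to prove (decreasing) already fails in Chen's case $f_1=x^n-cx^{n-1}$, $f_2=x^n$. There $g_1/g_2=1-\frac{c}{n}\sum_i\lambda_i^{-1}$ is strictly increasing on $\Gamma_n^+$, so no interlacing argument can deliver it. Your line-restriction plan is also more delicate than you suggest. Along a line in the $e_j$ direction the multi-affine $g_i$ are affine, so ``real-rooted with interlacing roots'' is degenerate, and $\partial_jg_1$ need not be positive on $\Upsilon_{g_2}$. The correct inequality $g_1\partial_jg_2-g_2\partial_jg_1\le0$ on $\Upsilon_{g_2}$ is condition (3) of Theorem~\ref{PositiveDerivative}. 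It becomes available once $f_2\prec f_1$ is polarized via Corollary~\ref{polarizationproperposition}. This is exactly what the paper uses to show that the $(n-1,n-1)$-form $\frac{g_1}{g_2}g_2'-g_1'$ is nonpositive.

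Second, your control of the $\epsilon$-term is not uniform in $\epsilon$. By Lemma~\ref{geodesic lemma} and Proposition~\ref{extension}, $\Pi_2$ is not $\epsilon$ times a fixed form. Up to normalization it is $\epsilon^2$ times the nonnegative form $-4e^{2t}\tilde g_2(\chi_{\varphi_\epsilon})$, whose integral over $X$ is only cohomologically controlled. To integrate $-\frac{g_1}{g_2}$ against it, you therefore need $g_1/g_2\le C$ with $C$ independent of $\epsilon$. Boundedness on ``the compact set of eigenvalues occurring'' gives only an $\epsilon$-dependent constant: the Hessian bounds of Section~\ref{C11section} depend on $\epsilon$, and no uniform $C^{1,\bar 1}$ estimate is available. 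The paper instead gets the global bound $g_1/g_2\le1$ on all of $\Upsilon_{g_2}$. This follows from the correct monotonicity together with $f_1,f_2$ being monic of the same degree (let all $\lambda_i\to\infty$). With your sign, the same reasoning would give $g_1/g_2\ge1$, which is useless for a lower bound on $-\Pi_1$. With both corrections one gets $\ddot J_1(\varphi_{\epsilon,t})\ge -C\epsilon^2e^{2t}$, and your Step 3 then goes through as in the paper.
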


We can also prove that the $d_p$ length of the $\epsilon$-geodesics converges to the $d_p$ distance. As a byproduct, we obtain an estimate for the $d_p$ distance, which implies that $(\mathcal{H}, d_p)$ is a metric space. We call the completion the $\mathcal{E}^p$ space, which is the classical one when $f_2(x)=x^n$. See \cite{DarvasMabuchiCompletion} for more details.

\begin{thm}
\label{MainThm2}
For any $p\in[1,\infty)$ and any $\varphi_{0},\varphi_{1}\in \mathcal{H}$, let $\varphi_{\epsilon}$ be the $\epsilon$-geodesic connecting $\varphi_{0}$ and $\varphi_{1}$. Then we have
\begin{enumerate}
    \item $d_{p}(\varphi_{0},\varphi_{1})=\lim_{\epsilon\rightarrow 0}\operatorname{length}_{p}(\varphi_{\epsilon})$;
    \item The following inequality holds 
\[
d_p^p(\varphi_0,\varphi_1) \ge \max\Big\{
\int_{\{\varphi_0\ge\varphi_1\}} (\varphi_0-\varphi_1)^p g_2(\chi_{\varphi_0}),\;
\int_{\{\varphi_1\ge\varphi_0\}} (\varphi_1-\varphi_0)^p g_2(\chi_{\varphi_1})
\Big\}.
\]
\end{enumerate}
\end{thm}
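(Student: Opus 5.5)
We follow the Darvas strategy from \cite{DarvasMabuchiCompletion}, adapted to the measure $g_2(\chi_\varphi)$ in place of $\omega_\varphi^n$. Here $d_p$ is defined as the infimum of the $p$-lengths $\int_0^1\big(\int_X|\dot\gamma_t|^p g_2(\chi_{\gamma_t})\big)^{1/p}dt$ over smooth curves in $\mathcal{H}$.

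\textbf{Step 1: the $\epsilon$-geodesics are almost minimizing.} We compare $\operatorname{length}_p(\varphi_\epsilon)$ with any smooth curve $\gamma_s$ joining $\varphi_0$ and $\varphi_1$. Write the $\epsilon$-geodesic equation as $\ddot\varphi\, g_2(\chi_\varphi)-(\text{mixed gradient term})=\epsilon\,(\text{positive reference form})$. We compute
\[
\frac{d}{dt}\int_X|\dot\varphi_\epsilon|^{p-2}\dot\varphi_\epsilon\,g_2(\chi_{\varphi_\epsilon}),
\]
integrating by parts with $d^c$ and using that the $\sigma_k$-polarisation is closed. The result is $\epsilon$ times a bounded error, so the $p$-energy of $\varphi_\epsilon$ is almost constant in $t$.

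We then adapt Chen's argument and Darvas' Theorem 1 proof. Consider the two-parameter family obtained by solving $\epsilon$-geodesics from $\varphi_0$ to $\gamma_s$. Differentiating the length in $s$ shows
\[
\operatorname{length}_p(\varphi_\epsilon)\le \operatorname{length}_p(\gamma)+C\epsilon .
\]
The key ingredient is that the first variation of length is bounded by $\|\partial_s\gamma\|_{L^p(g_2(\chi_{\gamma_s}))}$ plus an $O(\epsilon)$ term. This uses the endpoint identity for $\dot\varphi_\epsilon(1)$ and a Hölder inequality. The constant $C$ depends only on uniform bounds on $\varphi_\epsilon$ and $\dot\varphi_\epsilon$; these come from Theorem~\ref{MainThm1} together with the barrier construction.

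\textbf{Step 2: part 1.} Since $\varphi_\epsilon$ is a smooth curve in $\mathcal{H}$, we have $d_p\le \operatorname{length}_p(\varphi_\epsilon)$. Step 1 gives $\limsup_{\epsilon\to0}\operatorname{length}_p(\varphi_\epsilon)\le d_p$. Together these give the limit.

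\textbf{Step 3: part 2.} Each $\varphi_\epsilon$ is convex in $t$ because it is $D\bar D$-subharmonic in the $S^1$-invariant annulus direction, up to the $\epsilon$ correction. Hence $\dot\varphi_\epsilon(0)\le \varphi_1-\varphi_0+C\epsilon$ pointwise. By monotonicity in $\epsilon$ and uniform boundedness, the $\varphi_\epsilon$ converge decreasingly, so on $\{\varphi_0\ge\varphi_1\}$ we get $\liminf|\dot\varphi_\epsilon(0)|\ge \varphi_0-\varphi_1$.

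Combining the almost-constancy of the $p$-energy from Step 1 with part 1 gives
\[
d_p^p=\lim_{\epsilon\to0}\int_X|\dot\varphi_\epsilon(0)|^p g_2(\chi_{\varphi_0}).
\]
Restricting the integral to $\{\varphi_0\ge\varphi_1\}$ and applying Fatou's lemma yields the first term of the maximum. Reversing the roles of $\varphi_0$ and $\varphi_1$, via the reversed geodesic $t\mapsto 1-t$ with the endpoint measure $g_2(\chi_{\varphi_1})$, yields the second.

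\textbf{Main obstacle.} The main obstacle is Step 1 without a $C^{1,\bar1}$ estimate. Every error term must be controlled using only $C^0$ and $\dot\varphi$ bounds, together with the positivity of the Gårding-type cone of $g_2$. This positivity is needed so that the integration-by-parts remainder, a mixed form built from $\partial\dot\varphi\wedge\bar\partial\dot\varphi$ and $\sigma_{k-1}$-polarisations, has a sign. I would first establish that this mixed form is nonnegative for admissible $\chi_\varphi$, using the Fang–Ma theory of \cite{FangMaGarding}. After that, the error reduces to $\epsilon$ times the total mass $\int_X\omega^n$ times a bound on $\|\dot\varphi_\epsilon\|_\infty^{p-1}$.
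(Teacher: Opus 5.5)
Your overall route is the paper's: first variation of length along a family of $\epsilon$-geodesics, almost-constancy of the $p$-energy via Lemma~\ref{geodesic lemma}, and convexity in $t$ for the endpoint bound. Steps~2--3 go through once Step~1 is established. Step~1 as sketched does not work, however, and for each $p\in[1,\infty)$ at least one of the following two problems occurs.

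\textbf{Problem (a).} For $1\le p<2$ the weight $|v|^p$ is not $C^2$. After you differentiate the length in $s$ and integrate by parts in $t$, the $\epsilon$-error of the equation gets multiplied by $\partial_s\varphi_\epsilon\,(p-1)|\dot\varphi_\epsilon|^{p-2}$. For $p=1$ it is multiplied by a term singular on $\{\dot\varphi_\epsilon=0\}$. No $C^0$ or $\dot\varphi$ bound controls either factor. The paper replaces $|v|^p$ by $\sqrt{|v|^{2p}+\delta^2}$, which gives an error $C\epsilon^2\delta^{-3}$ rather than $C\epsilon$. It then lets $\epsilon\to0$ before $\delta\to0$, using $|\operatorname{length}_p-\operatorname{length}_{p,\delta}|\le C\delta^{1/p}$. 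So one only obtains a $\limsup$ inequality, not $\operatorname{length}_p(\varphi_\epsilon)\le\operatorname{length}_p(\gamma)+C\epsilon$.

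\textbf{Problem (b).} For $p>1$, $\frac{d}{ds}\int_0^1E^{1/p}\,dt$ contains $E^{1/p-1}$. After the integration by parts in $t$ it also contains $\partial_t(E^{1/p-1})=O(\epsilon^2E^{1/p-2})$, which after H\"older leaves an error of order $\epsilon^2/E$. You therefore need a lower bound on $E$ that is uniform in $s$. Your family degenerates exactly there: at $s=0$ you join $\varphi_0$ to $\gamma_0=\varphi_0$. There $\dot\varphi_\epsilon=O(\epsilon^2)$ by Proposition~\ref{ddtestimate}, so $E=O(\epsilon^{2p})$. The same happens whenever $\gamma$ revisits the fixed endpoint.

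The paper handles (b) as follows.
\begin{itemize}
\item It fixes the endpoint $\hat\psi=\varphi_1$ and varies the other endpoint along $\psi(s)$.
\item It discards the part of $\psi$ after its first arrival at $\varphi_1$ and runs the variation only on $[0,s_0]$ with $s_0<1$.
\item On that range, Lemma~\ref{estimate lemma}(ii) bounds $E^\epsilon_{p,\delta}$ below uniformly for small $\epsilon$. This lemma is your Step~3 convexity bound combined with almost-constancy.
\item The leftover piece is estimated crudely by $\operatorname{length}_p(\varphi_\epsilon(s_0,\cdot))\le C\|\psi(s_0)-\varphi_1\|_{L^\infty}+C\epsilon^2$.
\item The limits are taken in the order $\epsilon\to0$, then $\delta\to0$, then $s_0\to1$.
\end{itemize}
Alternatively, once you regularize as in (a), $E_{p,\delta}\ge\delta\int_Xg_2(\chi_{\varphi_0})>0$ already supplies a $\delta$-dependent lower bound. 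That suffices, since $\epsilon\to0$ first.

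You also need $|\partial_s\varphi_\epsilon|\le C$, not just bounds on $\varphi_\epsilon$ and $\dot\varphi_\epsilon$. This comes from differentiating the equation in $s$ and applying the maximum principle.

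Finally, the obstacle you single out is not the real one. No sign of $\sqrt{-1}\partial\dot\varphi\wedge\bar\partial\dot\varphi\wedge g_2'$ is needed. After integrating by parts it combines with $\ddot\varphi\,g_2$ into the left side of Lemma~\ref{geodesic lemma}, which equals $-4\epsilon^2e^{2t}\tilde g_2(\chi_{\varphi_\epsilon})$. What is actually used is $\tilde g_2\le0$ (Proposition~\ref{extension}) together with the fact that its integral over $X$ is cohomological. That is what gives the $L^1$ control of the error term.
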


Finally, let us compare our results with the recent work of the first and third authors with Xu \cite{ChenNieXuNumericalCriterion}, which proves a Nakai-Moishezon-type criterion following the roadmap of Demailly and P\u{a}un \cite{DemaillyPaunKahlerCone}, the first author \cite{ChenJEquationDHYM}, and others \cite{DatarPingaliNumericalCriterion, SongNakaiMoishezon, ChuLeeTakahashiNakaiMoishezon, FangMaDifferentialForms, FuZhangZhangTwoHessian}. In that paper, the projectivity assumption is essential, as shown by Zhang's counterexample \cite{ZhangSupercriticalDHYM}. In the K\"ahler case, the best result one can hope for is a Chen-Cheng-type coercivity theorem. This will be studied by the first two authors and another collaborator in \cite{ChenGhoshWangCoerciveness}.

The paper is organized as follows. Section 2 reviews the right-Noetherian and Fang--Ma--G\aa rding polynomial framework and derives several useful lemmas. Section 3 derives the geodesic equation, rewrites it on $X\times\mathcal A$, and introduces its $\epsilon$-regularization. Section 4 introduces a continuity path for the $\epsilon$-geodesic equation and constructs uniform sub- and supersolutions, yielding the $C^0$ and time-derivative estimates. Section 5 establishes fixed-$\epsilon$ boundary and interior complex-Hessian and gradient estimates. Section 6 uses the fixed-$\epsilon$ theory to obtain smooth $\epsilon$-geodesics and then takes their monotone limit to define the bounded weak geodesic. Section 7 defines the mixed Aubin--Mabuchi functionals and proves linearity of $I_2$ and convexity of $J_1$ along the weak geodesic. Section 8 introduces the 
$d_p$ metric, proves convergence of $\epsilon$-geodesic lengths to it, and defines its abstract metric completion $\mathcal{E}^p$. Section 9 study several examples.

\paragraph{\textbf{Declaration on the use of AI.}}
We have discussed the $C^{1,\bar 1}$ estimate with ChatGPT 5.6 Sol but cannot prove anything interesting to us. ChatGPT 5.6 Sol was also used to assist with presentations and grammar corrections of this paper.

\section{Preliminaries}

 This section recalls the required materials from
\cite{FangMaGarding} and \cite{LinInverseSigmaConvexity}, \cite{LinInverseSigmaConvexityCorrigendum} in notation adapted to the present paper and derives some consequences which will be used later.

A polynomial is \emph{multi-affine} if its degree in each individual
variable is at most one.

\begin{defn}[Definition 3.1(7),(9) of \cite{FangMaGarding}]
Let \[\sigma_k(x_1,\ldots,x_n)=\sum_{i_1<\cdots<i_k}x_{i_1}\cdots x_{i_k}\] be the \(k\)-th elementary symmetric
polynomial, with \(\sigma_0=1\).

If \(h(x)=\sum_{k=0}^{d}c_k \binom nk x^k\) and \(n\ge d\), its ordinary polarization is
\[
  \Pol_n(h)(x_1,\ldots,x_n)
  =
  \sum_{k=0}^{d}c_k\sigma_k(x_1,\ldots,x_n).
\]
It is the unique symmetric multi-affine polynomial satisfying
\[
  \Pol_n(h)(x,\ldots,x)=h(x).
\]

More generally, let
\(\boldsymbol{\kappa}=(\kappa_1,\ldots,\kappa_m)\in\mathbb N^m\), and
suppose \(h(x_1,\ldots,x_m)=\sum_{\alpha\le\boldsymbol{\kappa}}
c_\alpha x^\alpha\), where
\(\alpha=(\alpha_1,\ldots,\alpha_m)\),
\(x^\alpha=\prod_i x_i^{\alpha_i}\), and
\(\alpha\le\boldsymbol{\kappa}\) means
\(\alpha_i\le\kappa_i\) for every \(i\).  The
\(\boldsymbol{\kappa}\)-polarization is
\[
  \Pi^\uparrow_{\boldsymbol{\kappa}}(h)
  =
  \sum_{\alpha\le\boldsymbol{\kappa}}c_\alpha
  \prod_{i=1}^m
  \frac{\sigma_{\alpha_i}(x_{i1},\ldots,x_{i\kappa_i})}
       {\binom{\kappa_i}{\alpha_i}}.
\]
Its output is multi-affine in all the variables
\(\{x_{ij}:1\le i\le m,\ 1\le j\le\kappa_i\}\), and setting each
block \(x_{i1}=\cdots=x_{i\kappa_i}=x_i\) recovers \(h\).
\end{defn}

\begin{defn}[{Definitions 2.2 and 2.3} of \cite{FangMaGarding}]
For \(n\ge1\), write
\begin{align*}
  \Gamma_n^+
  &=\{x=(x_1,\ldots,x_n)\in\RR^n:x_i>0\text{ for every }i\},\\
  \overline{\Gamma_n^+}
  &=\{x=(x_1,\ldots,x_n)\in\RR^n:x_i\ge0\text{ for every }i\}.
\end{align*}
A set \(\Upsilon\subseteq\RR^n\) passes the \emph{positive ray test}
\emph{(PRT)} if
\[
  \Upsilon+\overline{\Gamma_n^+}\subseteq \Upsilon.
\]
It is \emph{negative ray terminating} \emph{(NRT)} if, for
every \(x\in \Upsilon\), the set
\[
  (\{x\}-\overline{\Gamma_n^+})\cap \Upsilon
\]
is bounded.
\end{defn}

We now introduce the definition of a G{\aa}rding polynomial from \cite{FangMaGarding}. To distinguish this notion from the classical one, we call such a polynomial a Fang-Ma-G{\aa}rding polynomial. Since the original definition in Definitions~4.8 and 4.9 of \cite{FangMaGarding} is more complicated, we use another characterization here, which is equivalent to the original definition by Theorem~1.1(3) of \cite{FangMaGarding}.

\begin{defn}[Theorem 1.1(3) of
\cite{FangMaGarding}]
\label{FMGdefinition}
Let \(p\in\RR[x_1,\ldots,x_n]\) be nonzero.  For a
multi-index
\(\alpha=(\alpha_1,\ldots,\alpha_n)\in\mathbb Z_{\ge0}^n\), write
\[
  \partial^\alpha p
  =
  \frac{\partial^{|\alpha|}p}
       {\partial x_1^{\alpha_1}\cdots\partial x_n^{\alpha_n}},
  \qquad |\alpha|=\alpha_1+\cdots+\alpha_n.
\]
Then \(p\) is called a \emph{Fang-Ma-G{\aa}rding polynomial} if and only if,
for every \(\alpha\) such that \(\partial^\alpha p\not\equiv0\),
\begin{enumerate}
  \item the set \(\{\partial^\alpha p>0\}\) has a unique connected
  component \(\Upsilon_{\partial^\alpha p}\) passing PRT; and
  \item this component is contained in the corresponding components
  of all its first partial derivatives:
  \[
    \Upsilon_{\partial^\alpha p}
    \subseteq \Upsilon_{\partial_i\partial^\alpha p}
    \qquad(1\le i\le n).
  \]
\end{enumerate}
Here \(\Upsilon_h=\RR^n\) when \(h\equiv c>0\), and by convention \(\Upsilon_0=\RR^n\).
The component \(\Upsilon_p\) is called the \emph{Fang-Ma-G{\aa}rding component} of
\(p\), which is a generalization of the $\Upsilon$-stable component of \cite{LinInverseSigmaConvexity}.  We denote the class of such polynomials in \(n\) variables by
\(\Ga_n\), and put \(\Ga=\bigcup_{n\ge1}\Ga_n\).

\end{defn}

\begin{thm}[{Theorem 8.13} of \cite{FangMaGarding}]
\label{kappa-polarization}
Let \(p\in\Ga_n\) have multidegree at most
\(\boldsymbol{\kappa}\), meaning
\(\deg_{x_i}p\le\kappa_i\) for every \(i\).  If its Fang-Ma-G{\aa}rding
component \(\Upsilon_p\) is NRT, then
\[
  \Pi^\uparrow_{\boldsymbol{\kappa}}(p)\in\Ga.
\]
\end{thm}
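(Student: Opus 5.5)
This is Theorem 8.13 of Fang--Ma, so within the paper it is simply cited. If I had to prove it, I would reduce to one block at a time: polarize one variable into $\kappa_i$ copies and leave the others untouched. Since $\Pi^\uparrow_{\boldsymbol\kappa}$ factors as a composition of such one-variable polarizations, it suffices to show one thing. If $p\in\Ga_n$ has NRT component and $\deg_{x_1}p\le k$, then the polynomial $q(y_1,\dots,y_k,x_2,\dots,x_n)$ obtained by replacing $x_1^j$ with $\sigma_j(y)/\binom kj$ lies in $\Ga$ and again has NRT component. This also keeps the hypothesis alive for the next step.

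The natural candidate for the component of $q$ is built from the component of $p$. For fixed $x'=(x_2,\dots,x_n)$, view $t\mapsto p(t,x')$ as a univariate polynomial of degree $\le k$. Then $q$ is its symmetric multi-affine polarization in $y$. I would use a Grace--Walsh--Szeg\H{o}-type coincidence principle for real roots, or Lin's right-Noetherian analysis, in the following way. The PRT component of $p$ in the $x_1$ direction is a half-line $(r(x'),\infty)$, where $r(x')$ is the largest real root; NRT guarantees the root exists, so the component is a genuine half-line. The candidate component is then
\[
\Upsilon_q=\{(y,x') : y_j>r_j \text{ for suitable thresholds}\},
\]
characterised as the set where all the univariate restrictions obtained by freezing all but one $y_j$ are positive to the right of their largest roots. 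Concretely, I would prove that $q>0$ on $\{\min_j y_j> r(x')\}$ and take $\Upsilon_q$ to be the connected component of $\{q>0\}$ containing this set.

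Next come the two Fang--Ma--G\aa rding conditions for $q$ and all its derivatives. Derivatives of $q$ in the $y$-variables are, up to constants, polarizations of $\partial_{x_1}^m p$ in fewer variables. Derivatives in $x'$ commute with polarization. So every $\partial^\alpha q$ is again a polarization of some $\partial^\beta p$, and $\partial^\beta p$ lies in $\Ga$ with a component containing $\Upsilon_p$. For such $\partial^\beta p$ I need three facts. First, uniqueness of the PRT component: any PRT component must eventually contain a translate of the region $\{\min y_j\gg0\}$ in which everything is positive, so two PRT components would meet. Second, the nesting $\Upsilon_{\partial^\alpha q}\subseteq\Upsilon_{\partial_i\partial^\alpha q}$, which I would deduce from monotonicity: along positive rays inside $\Upsilon_{\partial^\alpha q}$ the function cannot decrease to zero before the derivative does, which is the same mechanism that gives nesting for $p$. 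Third, NRT for $\Upsilon_q$, which follows from NRT for $\Upsilon_p$ together with boundedness of roots along the diagonal.

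The main obstacle is the multi-affine positivity step: showing that $q$ is positive, with the right connectivity, on the region defined by the thresholds. In other words, polarization in one variable has to preserve the ``right of the largest root'' structure even though $p(\cdot,x')$ may have complex roots. For Lin's right-Noetherian polynomials this is exactly where real-rootedness of the iterated derivatives is used. In general I would first try induction on $k$, using the identity
\[
q(y_1,\dots,y_k)=q\big|_{y_k=0}+y_k\,\partial_{y_k}q ,
\]
where both terms are lower-order polarizations of $p$ and $\partial_{x_1}p$ respectively. I expect the NRT hypothesis to be essential precisely here, since it controls the root thresholds uniformly.
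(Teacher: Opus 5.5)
The paper gives no proof of this statement; it is cited from Fang--Ma. So the question is whether your sketch stands on its own, and it does not: the decisive step is left open.

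Some parts are fine. The block-by-block reduction works. Uniqueness of a PRT component is actually trivial: if $U,U'$ pass PRT and $z\in U$, $z'\in U'$, the coordinatewise maximum of $z,z'$ lies in $U\cap U'$. NRT of $\Upsilon_q$ follows from Lemma~\ref{NRT-redundant} once $q\in\Ga$ is known. But the two statements with content are not proved: the \emph{existence} of a component of $\{q>0\}$ passing PRT, and the nesting $\Upsilon_{\partial^\alpha q}\subseteq\Upsilon_{\partial_i\partial^\alpha q}$.

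Your nesting argument is circular. For $p$ the nesting is part of the hypothesis $p\in\Ga_n$, not the output of a ``mechanism''. For $q$, monotonicity along positive rays in $\Upsilon_q$ is exactly positivity of $\partial_i q$ on $\Upsilon_q$, which is what has to be shown.

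Positivity of $q$ on $\{\min_j y_j>r(x')\}$ is also not a statement about real roots. Grace--Walsh--Szeg\H{o} gives $q\neq 0$ only when all $y_j$ lie in a circular region free of \emph{complex} zeros of $p(\cdot,x')$. Your half-line description $(r(x'),\infty)$ of the slices is also valid only for $x'$ in the projection of $\Upsilon_p$. Even granted, that set is a small part of $\Upsilon_q$. For $p=t^2-1$, $\Upsilon_q=\{y_1,y_2>0,\ y_1y_2>1\}$ contains $(\tfrac12,3)$. So it says nothing about PRT of the whole component or about nesting.

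The induction you suggest at the end does not work as stated either. Write $q=q_0+y_kq_1$. Then $q_1=\frac1k\Pol_{k-1}(\partial_t p)$, but $q_0=q|_{y_k=0}=\Pol_{k-1}\bigl(p-\tfrac tk\partial_t p\bigr)$. This is not a polarization of $p$ and need not lie in $\Ga$: for $p=t^k-1$ you get $q=y_1\cdots y_k-1$ and $q_0\equiv -1$. So no inductive hypothesis can be applied to $q_0$.

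The missing idea is the structure already visible in Proposition~\ref{extension}. The component should be $\{(y',y_k,x'):(y',x')\in\Upsilon_{q_1},\ y_k>-q_0/q_1\}$. What makes this a PRT component of $\{q>0\}$ is that the threshold $-q_0/q_1$ is nonincreasing in every positive direction on $\Upsilon_{q_1}$, i.e.\ $q_1\partial_i q_0-q_0\partial_i q_1\ge 0$ there. This also forces $q_0\le 0$ on $\partial\Upsilon_{q_1}$, so the set is a whole component. This is a proper-position inequality of the type in Theorem~\ref{PositiveDerivative}. It must be proved for polarizations and carried through the induction, and through all derivatives $\partial^\alpha q$ to obtain the nesting. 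Supplying that inequality is the substance of Fang--Ma's theorem; your proposal identifies the obstacle but does not overcome it.
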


Fang and Ma have provided the following characterization of NRT polynomials:

\begin{lem}[Lemma 9.5 of \cite{FangMaGarding}]
\label{NRT-redundant}
Let $f\in \Ga_n$. Then the Fang--Ma--G{\aa}rding component $\Upsilon_f$ is NRT if and only if $\deg_{x_i} f\ge 1$ for every $i=1,2,\ldots,n$.
\end{lem}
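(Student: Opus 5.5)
We prove the two directions separately, with the direction ``$\Upsilon_f$ NRT $\Rightarrow \deg_{x_i} f\ge1$'' being elementary. Suppose $\deg_{x_i}f=0$ for some $i$, so $f$ does not depend on $x_i$. Then membership of a point in $\{f>0\}$, and in its connected components, is unchanged by moving along the $e_i$ direction. Hence $\Upsilon_f$ is invariant under translations by $\RR e_i$. Take any $x\in\Upsilon_f$ (nonempty, since it is a component passing PRT). Then $x-se_i\in(\{x\}-\overline{\Gamma_n^+})\cap\Upsilon_f$ for all $s\ge0$, which is unbounded, so $\Upsilon_f$ is not NRT. If $f$ is a positive constant, then $\Upsilon_f=\RR^n$, which is clearly not NRT; this is consistent with the convention in Definition \ref{FMGdefinition}.

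For the converse, assume $\deg_{x_i}f\ge1$ for every $i$. Fix $x\in\Upsilon_f$ and a coordinate $i$. The plan is to show that $x-se_i$, and more generally $x-v$ with $v\in\overline{\Gamma_n^+}$, leaves $\Upsilon_f$ once $|v|$ is large. I would induct on the derivatives, using property (2) of Definition \ref{FMGdefinition}. Let $d=\deg_{x_i}f$. Then $\partial_i^{d}f$ is a nonzero polynomial independent of $x_i$, and $\Upsilon_f\subseteq\Upsilon_{\partial_i f}\subseteq\cdots\subseteq\Upsilon_{\partial_i^{d-1}f}$. The polynomial $\partial_i^{d-1}f$ is affine in $x_i$ with slope $\partial_i^df$, which is positive on $\Upsilon_{\partial_i^{d-1}f}\subseteq\Upsilon_{\partial_i^df}$.

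The key step is to bound, along a path in $\Upsilon_f$, how negative $x_i$ can become, uniformly in the other coordinates as they decrease. First I would show that $\Upsilon_{\partial_i^{d-1}f}$ is NRT in the $e_i$ direction. On this component, $\partial_i^{d-1}f=a(x')x_i+b(x')>0$ with $a>0$ (where $x'$ denotes the remaining variables), so $x_i>-b/a$. This is not uniform as $x'$ decreases, however. So I would instead argue by contradiction. Suppose there is an unbounded sequence $x-v_k\in\Upsilon_f$ with $v_k\in\overline{\Gamma_n^+}$. By PRT and connectedness, the segments from $x$ to $x-v_k$ lie in $\Upsilon_f$: the box $\{x-w:0\le w\le v_k\}$ is contained in $\Upsilon_f$, because $x-w=(x-v_k)+(v_k-w)$ and PRT applies. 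Passing to a subsequence, $v_k/|v_k|\to u\in\overline{\Gamma_n^+}$ with $|u|=1$. The boxes then contain long segments in direction $-u'$, for any $u'\le u$ with the same support. In particular, choosing a coordinate $i$ with $u_i>0$ yields points $x-se_i\cdot c$ in $\Upsilon_f$ for arbitrarily large $s$, after adjusting using the box structure. So it suffices to show that each ray $x-se_i$ exits $\Upsilon_f$. Along this ray $f$ is a polynomial in $s$ of degree $d\ge1$. Its top coefficient is $(-1)^d\partial_i^df/d!$, which is constant along the ray and positive, since $x\in\Upsilon_{\partial_i^df}$ and that polynomial is independent of $x_i$. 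The same holds for each $\partial_i^jf$ with $j\ge1$. Now suppose the whole ray lay in $\Upsilon_f\subseteq\Upsilon_{\partial_i^jf}$ for all $j$. Then all $\partial_i^jf$ would be positive along it, and the derivative of $f(x-se_i)$ in $s$ would be $-\partial_if<0$, with every higher derivative in $s$ of sign $(-1)^j$. This is impossible for a nonconstant polynomial on $[0,\infty)$: $(-1)^d\partial_i^df>0$ forces $\partial_i^{d-1}f(x-se_i)\to-\infty$ whenever $d-1\ge0$ and the slope is nonzero. That is the desired contradiction.

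I expect the main obstacle to be the reduction from unbounded directions $u\in\overline{\Gamma_n^+}$ to coordinate rays, that is, making the box argument rigorous. The ray argument needs only that some coordinate direction $e_i$ with $u_i>0$ satisfies $x-se_i\in\Upsilon_f$ for all large $s$. The box $\{x-w:0\le w\le v_k\}$ contains $x-(v_k)_ie_i$, and $(v_k)_i\to\infty$ for any $i$ with $u_i>0$. So the reduction works directly, and the whole ray $x-se_i$, $s\ge0$, lies in $\Upsilon_f$.
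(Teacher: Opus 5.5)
Your proof is correct. The paper does not prove this statement; it quotes it as Lemma 9.5 of Fang--Ma. So there is no in-paper argument to compare against. What you have written is a self-contained proof that uses only PRT and properties (1)--(2) of Definition~\ref{FMGdefinition}.

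The easy direction is fine as written. If $f$ does not depend on $x_i$, each line $y+\RR e_i$ with $y\in\Upsilon_f$ is a connected subset of $\{f>0\}$, so it lies in $\Upsilon_f$.

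For the converse, you isolate the right point. Put $d=\deg_{x_i}f\ge 1$. The chain $\Upsilon_f\subseteq\Upsilon_{\partial_i f}\subseteq\cdots\subseteq\Upsilon_{\partial_i^{d}f}$ gives $\partial_i^d f(x)>0$. The map $s\mapsto\partial_i^{d-1}f(x-se_i)$ is affine with slope $-\partial_i^d f(x)<0$. Hence it cannot stay positive along a ray contained in $\Upsilon_f\subseteq\Upsilon_{\partial_i^{d-1}f}$.

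There are two small things to fix.

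First, the sign statements. The top coefficient $(-1)^d\partial_i^d f(x)/d!$ of $s\mapsto f(x-se_i)$ has sign $(-1)^d$. What is positive is $\partial_i^d f(x)$. That fact, not ``$(-1)^d\partial_i^df>0$'', is what forces $\partial_i^{d-1}f(x-se_i)\to-\infty$. Likewise, for $1\le j<d$ the functions $\partial_i^j f$ are positive on $\Upsilon_f$, but they are not constant along the ray.

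Second, you can drop the compactness/contradiction step and the discussion of directions $u'\le u$. Suppose $x-w\in\Upsilon_f$ with $w\in\overline{\Gamma_n^+}$, and put $d_i=\deg_{x_i}f$. PRT gives $x-w_ie_i=(x-w)+\sum_{j\ne i}w_je_j\in\Upsilon_f\subseteq\Upsilon_{\partial_i^{d_i-1}f}$. Therefore $0<\partial_i^{d_i-1}f(x)-w_i\,\partial_i^{d_i}f(x)$, that is, $0\le w_i<\partial_i^{d_i-1}f(x)/\partial_i^{d_i}f(x)$ for every $i$. This is an explicit bound on $(\{x\}-\overline{\Gamma_n^+})\cap\Upsilon_f$, and it is the direct form of your argument.
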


We next recall the definition introduced by Lin.

\begin{defn}[Definition 2.1 of \cite{LinInverseSigmaConvexity}]
\label{right-Noetherian}
For a nonconstant univariate polynomial \(h\), let \(r(h)\) denote its
largest real root, provided such a root exists.  If
\(\deg h=d\), its \emph{root sequence} is
\[
  R(h)=\bigl(r(h),r(h'),\ldots,r(h^{(d-1)})\bigr),
\]
provided all entries exist.  The polynomial \(h\) is
\emph{right-Noetherian} if all these entries exist
and
\[
  r(h)\ge r(h')\ge\cdots\ge r(h^{(d-1)}).
\]

The polynomial \(h\) with degree $d\ge 2$ is
\emph{strictly right-Noetherian} if all these entries exist
and
\[
  r(h)> r(h')\ge\cdots\ge r(h^{(d-1)}).
\]

\end{defn}

We need the following lemma.
\begin{lem}
\label{IntegralUpsilonStable}
If $f(x) = \sum_{k=0}^{n}c_k \binom{n}{k} x^k$, with $c_n=1$, is right-Noetherian, then, for all sufficiently large $C$, 
\[\tilde f(x)=\int_0^x f(t)\,dt - C= \frac{1}{n+1} \Big( \sum_{k=0}^{n} c_k\binom{n+1}{k+1}x^{k+1} - (n+1)C \Big) \]
is strictly right-Noetherian.
\end{lem}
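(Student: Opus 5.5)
The plan is to compute the root sequence of $\tilde f$ directly, using only $\tilde f'=f$. Assume $n\ge1$, so that $\deg\tilde f=n+1\ge2$ and the notion of strictly right-Noetherian applies. Since $c_n=1$, $\tilde f$ has degree $n+1$ with positive leading coefficient $\frac{1}{n+1}$, and $f$ has degree exactly $n$ with leading coefficient $1$.

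For $1\le k\le n$ we have $\tilde f^{(k)}=f^{(k-1)}$. Hence the root sequence of $\tilde f$ in the sense of Definition~\ref{right-Noetherian} is
\[
R(\tilde f)=\bigl(r(\tilde f),\,r(f),\,r(f'),\ldots,r(f^{(n-1)})\bigr).
\]
Everything after the first entry is exactly $R(f)$. These entries exist and are weakly decreasing because $f$ is right-Noetherian. It therefore remains to show two things: that $r(\tilde f)$ exists, and that $r(\tilde f)>r(f)$, for all sufficiently large $C$.

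Set $a=r(f)$, the largest real root of $f$. Since $f$ is monic and has no real root beyond $a$, we have $f>0$ on $(a,\infty)$. Consequently $\tilde f$ is strictly increasing on $[a,\infty)$ and $\tilde f(x)\to+\infty$ as $x\to\infty$.

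Now choose $C>\int_0^a f(t)\,dt$; any such $C$ works, which gives the phrase ``sufficiently large''. Then $\tilde f(a)=\int_0^a f(t)\,dt-C<0$. By the intermediate value theorem there is some $b>a$ with $\tilde f(b)=0$. Strict monotonicity on $[a,\infty)$ shows that $b$ is the only root in $[a,\infty)$. Hence $\tilde f$ has real roots, and its largest real root is $r(\tilde f)=b>a=r(f)$.

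Combining this with the right-Noetherian property of $f$ gives
\[
r(\tilde f)>r(f)\ge r(f')\ge\cdots\ge r(f^{(n-1)}),
\]
which is precisely the statement that $\tilde f$ is strictly right-Noetherian.

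There is no real obstacle in this argument. The only point needing care is the sign of $\tilde f$ on $(r(f),\infty)$, which follows from $c_n=1$, together with the choice of $C$ making $\tilde f(r(f))<0$.
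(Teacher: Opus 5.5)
Your proof is correct and follows the same route as the paper: the tail of the root sequence of $\tilde f$ is $R(f)$, you choose $C>\int_0^{r(f)}f(t)\,dt$ so that $\tilde f(r(f))<0$, and the intermediate value theorem gives a root beyond $r(f)$. Your monotonicity remark (that this root is the unique one in $[r(f),\infty)$) is harmless but not needed, since any root beyond $r(f)$ already forces $r(\tilde f)>r(f)$.
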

\begin{proof}
Using the notation in Definition~\ref{right-Noetherian}, we have
\[
r(f) \ge r(f') \ge \cdots\ge r(f^{(n-1)}).
\]
Since $\tilde f'(x) = f(x)$, we only need to show that for all sufficiently large $C$, $r(\tilde f)$ exists and
\[
r(\tilde f) > r(\tilde f') = r(f).
\]
Choose $C_0 := \int_0^{r(f)} f(t)\,dt$. Then, for all $C > C_0$, $\tilde f(r(f)) < 0$.
Moreover, $\lim_{x\to +\infty}\tilde f(x) = +\infty$, so the intermediate value theorem 
shows that there exists a point $x_0 > r(f)$ such that $\tilde f(x_0)=0$.
This implies that $r(\tilde f) > r(f)$ for all $C > C_0$.
\end{proof}

The following is immediate from Section~9.2 of \cite{FangMaGarding}.
\begin{lem}[Section 9.2 of \cite{FangMaGarding}]
\label{FangMaLinCorrespondence}
A monic univariate polynomial $p(x)$ belongs to $\Ga_1$ if and only if it is right-Noetherian.
\end{lem}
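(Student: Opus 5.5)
The statement reduces to unpacking both definitions in the univariate case and checking that they coincide. I plan to do this directly, as in Section~9.2 of \cite{FangMaGarding}, by translating each Fang--Ma--G{\aa}rding condition of Definition~\ref{FMGdefinition} into a statement about largest real roots.

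\emph{Components in one variable.} For $n=1$ the derivatives $\partial^\alpha p$ are just $p^{(k)}$ with $0\le k\le d=\deg p$. Since $p$ is monic, every $p^{(k)}$ with $k\le d$ has positive leading coefficient. The set $\{p^{(k)}>0\}$ is a finite union of open intervals. A connected subset of $\RR$ passing the PRT must be unbounded to the right, so the only candidate is the rightmost interval. This interval always exists because the leading coefficient is positive. It equals $(r(p^{(k)}),\infty)$ if $p^{(k)}$ has a real root, and all of $\RR$ otherwise (for $k=d$ this matches the convention for positive constants).

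Hence condition (1) of Definition~\ref{FMGdefinition} holds automatically, with
\[
\Upsilon_{p^{(k)}}=(r(p^{(k)}),\infty) \quad\text{or}\quad \Upsilon_{p^{(k)}}=\RR .
\]
Indeed, the other components of $\{p^{(k)}>0\}$ are bounded intervals and fail the PRT.

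\emph{Reducing condition (2) to roots.} Condition (2) is the chain of inclusions
\[
\Upsilon_{p}\subseteq\Upsilon_{p'}\subseteq\cdots\subseteq\Upsilon_{p^{(d-1)}}\subseteq\Upsilon_{p^{(d)}}=\RR .
\]
For $k\le d-1$ the polynomial $p^{(d-1)}$ is linear, so $r(p^{(d-1)})$ always exists.

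\emph{Right-Noetherian implies $\Ga_1$.} All roots exist, and the inclusions $(r(p^{(k)}),\infty)\subseteq(r(p^{(k+1)}),\infty)$ are exactly the inequalities $r(p^{(k)})\ge r(p^{(k+1)})$.

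\emph{$\Ga_1$ implies right-Noetherian.} This is the only step needing care. We must show each $p^{(k)}$ with $k\le d-1$ has a real root. Suppose some $p^{(k)}$ has no real root, so $\Upsilon_{p^{(k)}}=\RR$. The inclusion $\RR\subseteq\Upsilon_{p^{(k+1)}}$ then forces $\Upsilon_{p^{(k+1)}}=\RR$, so $p^{(k+1)}>0$ everywhere. Inductively $p^{(d-1)}>0$ on all of $\RR$. This is impossible for a nonconstant linear polynomial. So all roots exist, and the inclusions give the monotone root sequence.

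The degenerate case $d=0$, where $p=1$, is trivially consistent with both definitions. The main point of attention is this existence argument and making sure the conventions ($\Upsilon_h=\RR$ for positive constants) line up.
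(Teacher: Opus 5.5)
Your argument is correct. It is the direct one-variable unpacking of Definition~\ref{FMGdefinition} that the paper leaves to the citation of Section~9.2 of \cite{FangMaGarding}: the unique PRT component of $\{p^{(k)}>0\}$ is $(r(p^{(k)}),\infty)$ or $\RR$, the inclusions become the root inequalities, and the linearity of $p^{(d-1)}$ forces every root to exist. One small wording fix: the non-rightmost components of $\{p^{(k)}>0\}$ need not be bounded (e.g.\ $(-\infty,a)$ when $d-k$ is even), but they are bounded above, and that is all your PRT argument actually uses.
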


Next, let us recall the definition of proper position from \cite{FangMaGarding}.

\begin{defn}[{Definition 10.9} of \cite{FangMaGarding}]
 Suppose $f_2(x), f_1(x) \in \Ga[x] \setminus \{0\}$. The pair $(f_2,f_1)$ is said to be \emph{in proper position} if
 \[f_2(x)y +f_1(x) \in \Ga[x,y].\]
In this case, we also write $f_2 \prec f_1$.
\end{defn}

Using this definition and Theorem \ref{kappa-polarization}, it is almost trivial to prove the following corollary. However, since it is not explicitly stated in their paper, we write it out here for completeness:

        \begin{cor}
        \label{polarizationproperposition}
        Suppose that $f_2, f_1\in \Ga_1$ are univariate monic polynomials of degree $1\le d\le n$. If $f_2\prec f_1$, then $\Pol_n f_2 \prec \Pol_n f_1$.
        \end{cor}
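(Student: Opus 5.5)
The plan is to apply Theorem~\ref{kappa-polarization} to the bivariate polynomial $p(x,y)=f_2(x)y+f_1(x)$ with multidegree bound $\boldsymbol{\kappa}=(n,1)$. Since $f_2\prec f_1$, we have $p\in\Ga_2$ by definition. We will then identify $\Pi^\uparrow_{(n,1)}(p)$ with $\Pol_n(f_2)(x_1,\ldots,x_n)\,y+\Pol_n(f_1)(x_1,\ldots,x_n)$, which is exactly the statement $\Pol_n f_2\prec\Pol_n f_1$ in the multivariate sense, namely that $\Pol_n f_2\cdot y+\Pol_n f_1\in\Ga$.

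First we check the hypotheses. The multidegree of $p$ is at most $(n,1)$ because $\deg f_i\le n$ and $p$ is affine in $y$. For the NRT property of $\Upsilon_p$ we use Lemma~\ref{NRT-redundant}. We have $\deg_y p=1$ since $f_2\neq0$, and $\deg_x p\ge1$ since $f_2$ is monic of degree $d\ge1$. Hence $\Upsilon_p$ is NRT, and Theorem~\ref{kappa-polarization} gives $\Pi^\uparrow_{(n,1)}(p)\in\Ga$.

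Next we compute the polarization. Write $f_i(x)=\sum_k c^{(i)}_k\binom nk x^k$. The monomials of $p$ are $c^{(2)}_k\binom nk x^k y$ and $c^{(1)}_k\binom nk x^k$. By the definition of the $\boldsymbol{\kappa}$-polarization, $x^k$ becomes $\sigma_k(x_1,\ldots,x_n)/\binom nk$, and $y$ becomes $\sigma_1(y_1)=y_1$, with $\binom11=1$. The factors $\binom nk$ cancel, giving
\[
\Pi^\uparrow_{(n,1)}(p)=\sum_k c^{(2)}_k\sigma_k(x)\,y_1+\sum_k c^{(1)}_k\sigma_k(x)=\Pol_n(f_2)\,y_1+\Pol_n(f_1).
\]

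The main subtlety is interpretive. Proper position was defined only for univariate $f_1,f_2$, so we will read $\Pol_n f_2\prec\Pol_n f_1$ as meaning that $\Pol_n f_2\cdot y+\Pol_n f_1\in\Ga_{n+1}$, extending Definition 10.9 of \cite{FangMaGarding} verbatim to $\Ga[x_1,\ldots,x_n]$. Under this reading the proof is complete.

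If one also wants $\Pol_n f_i\in\Ga_n$ itself, this follows from Theorem~\ref{kappa-polarization} applied to $f_i\in\Ga_1$. Its component is NRT by Lemma~\ref{NRT-redundant}, since $\deg f_i\ge1$. Alternatively, $\Pol_n f_i$ is recovered as a coefficient or derivative of the $(n+1)$-variable polynomial: $\partial_{y}$ gives $\Pol_n f_2$, and the $y=0$ specialization gives $\Pol_n f_1$. This uses the closure properties of $\Ga$ under derivatives built into Definition~\ref{FMGdefinition}.
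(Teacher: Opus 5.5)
Your proposal is correct and is the paper's proof: it applies Theorem~\ref{kappa-polarization} with $\kappa=(n,1)$ to $f_2(x)y+f_1(x)$, checks NRT via Lemma~\ref{NRT-redundant}, and reads off $\Pol_n(f_2)\,y+\Pol_n(f_1)\in\Ga$. Your explicit coefficient computation and your check that each $\Pol_n f_i\in\Ga$ (Theorem~\ref{kappa-polarization} applied to $f_i$ itself) fill in details the paper leaves implicit; the alternative route through the $y=0$ specialization would need more than the derivative-closure built into Definition~\ref{FMGdefinition}.
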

        \begin{proof}
        By definition, $f_2(x)y+f_1(x)\in \Ga[x,y]$. By Lemma \ref{NRT-redundant}, its Fang-Ma-G{\aa}rding component is NRT. By Theorem \ref{kappa-polarization}, if we choose $\kappa=(n,1)$, then the $\kappa$-polarization 
        \[
        g_2(\mathbf{x})y+g_1(\mathbf{x})\in \Ga[\mathbf{x},y],
        \]
        where $g_2=\Pol_n f_2$ and $g_1=\Pol_n f_1$. Thus, $g_2\prec g_1$ by definition.
        \end{proof}
    The main reason for us to use the proper position condition is the following:
    \begin{thm}[Theorem 6.14 of \cite{FangMaGarding}]
\label{PositiveDerivative}
Let $g_2$ and $g_1$ be two nontrivial multi-affine Fang--Ma--G{\aa}rding polynomials. Then the following are equivalent:
\begin{enumerate}
    \item $g_2\prec g_1$;
    \item $\Upsilon_{g_2} \cap\{g_1>0\}=\Upsilon_{g_1}$;
    \item For every $i=1,2,\ldots,n$, $g_1\partial_i g_2-g_2\partial_i g_1 \le 0$ on $\Upsilon_{g_2}$.
\end{enumerate}
\end{thm}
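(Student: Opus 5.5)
Since this is Theorem~6.14 of \cite{FangMaGarding}, we could simply cite it. Here is how we would prove it directly. Set $h(\mathbf{x},y)=g_2(\mathbf{x})y+g_1(\mathbf{x})$, which is multi-affine, and
\[
U=\{(\mathbf{x},y):\mathbf{x}\in\Upsilon_{g_2},\ y>-g_1(\mathbf{x})/g_2(\mathbf{x})\}.
\]
The whole argument rests on identifying $\Upsilon_h$ with $U$.

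\textbf{Step 1: $(1)\Rightarrow(3)$ and $(1)\Rightarrow(2)$.} Assume $g_2\prec g_1$. Since $\partial_y h=g_2$, condition~2 of Definition~\ref{FMGdefinition} gives $\Upsilon_h\subseteq\Upsilon_{\partial_y h}=\Upsilon_{g_2}\times\RR$. Over $\Upsilon_{g_2}$ we have $g_2>0$, so $h>0$ there exactly when $y>-g_1/g_2$; hence $\Upsilon_h\subseteq U\subseteq\{h>0\}$. Now $U$ is connected, being the region above the graph of a continuous function on a connected set. Since $\Upsilon_h$ is a connected component of $\{h>0\}$, we get $\Upsilon_h=U$.

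PRT for $U$ in the direction $e_i$ says that $-g_1/g_2$ is nonincreasing in $x_i$ on $\Upsilon_{g_2}$. That is, $\partial_i(g_1/g_2)\ge 0$, which is exactly (3).

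For (2), slice at $y=0$: $U\cap\{y=0\}$ is $\Upsilon_{g_2}\cap\{g_1>0\}$. This set is PRT because both $g_2>0$ and $g_1/g_2$ are monotone along positive rays. It is also connected: from any point, move far along the diagonal direction, where everything lies in $\Upsilon_{g_2}$ and $g_1/g_2$ only increases. By uniqueness of the PRT component of $\{g_1>0\}$, it is contained in $\Upsilon_{g_1}$. For the reverse inclusion $\Upsilon_{g_1}\subseteq\Upsilon_{g_2}$, we would use that $g_1$ is itself a specialization of $h$ at $y=0$, together with the Fang--Ma fact that PRT components of FMG polynomials restrict well under such specializations. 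Alternatively, we would argue by continuity along segments inside $\Upsilon_{g_1}$: $g_2$ cannot vanish there while $g_1>0$ and $g_1/g_2$ is monotone.

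\textbf{Step 2: $(2)\Rightarrow(3)$.} Fix $\mathbf{x}\in\Upsilon_{g_2}$ and consider $t\mapsto g_1(\mathbf{x}+te_i)$ and $t\mapsto g_2(\mathbf{x}+te_i)$, both affine in $t$. Condition (2) says the zero set of $g_1$ inside $\Upsilon_{g_2}$ bounds a PRT region. Suppose (3) failed at some point. Then we could add a suitable multiple of $g_2$, i.e.\ look at $g_1+cg_2$, which has the same structure since $\Upsilon_{g_2}\cap\{g_1+cg_2>0\}$ is handled similarly, and obtain a point where moving in the $e_i$ direction leaves $\{g_1+cg_2>0\}$. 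That contradicts PRT of the corresponding component. The step of passing from (2) for $g_1$ to (2) for $g_1+cg_2$ needs care, and we would instead phrase it via the Wronskian sign directly.

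\textbf{Step 3: $(3)\Rightarrow(1)$.} This is the main obstacle. By the computation in Step~1, $U$ is a connected PRT component of $\{h>0\}$ once (3) holds. However, the Fang--Ma--G{\aa}rding property requires the same two conditions for \emph{every} derivative $\partial^\alpha h$. By multi-affinity, each such derivative is either $\partial^\alpha g_2$ (if $\alpha$ involves $y$) or $\partial^\alpha g_2\,y+\partial^\alpha g_1$.

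So we would induct on $|\alpha|$, showing that the Wronskian inequality (3) passes from $(g_2,g_1)$ to $(\partial_j g_2,\partial_j g_1)$ on $\Upsilon_{g_2}\subseteq\Upsilon_{\partial_j g_2}$. Because everything is affine in $x_j$, we would write $g_k=a_k+x_jb_k$ with $b_k=\partial_j g_k$. We would then try to extract the inequality for $(b_2,b_1)$ as the $x_j\to+\infty$ limit of (3). This limit is allowed because $\Upsilon_{g_2}$ is PRT. Finally, we would check the nesting $\Upsilon_{\partial^\alpha h}\subseteq\Upsilon_{\partial_i\partial^\alpha h}$ using the graph description of each component and the hypothesis that $g_1,g_2$ are FMG.
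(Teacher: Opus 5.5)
The paper gives no proof of this statement; it imports it from Fang--Ma, so your remark that one could simply cite it is exactly the paper's route. The only direction the paper actually uses is $(1)\Rightarrow(3)$, in Section~7, for the sign of $\frac{g_1}{g_2}g_2'-g_1'$. Your Step~1 proves that direction correctly. Your $(1)\Rightarrow(2)$ is also essentially sound, and your continuity option does work. Run a segment in a positive direction inside $\Upsilon_{g_1}$ from $\mathbf{x}$ to a point of $\Upsilon_{g_1}\cap\Upsilon_{g_2}$. At the point where it enters $\Upsilon_{g_2}$ one has $g_2=0<g_1$, so $g_1/g_2\to+\infty$ there. That contradicts $g_1/g_2$ being nondecreasing along the segment and finite at its endpoint.

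As a self-contained proof of the equivalence, however, there is a genuine gap: \textbf{nothing starts from (2)}. The $g_1+cg_2$ device needs (2) for $g_1+cg_2$, which is not a hypothesis, and you drop it without a replacement. So the cycle of implications never closes. A direct argument uses only tools you already have.
\begin{itemize}
\item Fix $i$ and write $g_k=a_k+x_ib_k$ with $b_k=\partial_ig_k$ independent of $x_i$. Then $g_1\partial_ig_2-g_2\partial_ig_1=a_1b_2-a_2b_1$ does not depend on $x_i$.
\item The nesting $\Upsilon_{g_k}\subseteq\Upsilon_{\partial_ig_k}$, combined with your Step~1 argument, gives $\Upsilon_{g_k}=\{\mathbf{x}\in\Upsilon_{\partial_ig_k}:x_i>-a_k/b_k\}$ when $b_k\not\equiv0$.
\item Hypothesis (2) yields $\Upsilon_{g_1}\subseteq\Upsilon_{g_2}$. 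Over $\Upsilon_{\partial_ig_1}$ the thresholds therefore satisfy $-a_1/b_1\ge-a_2/b_2$, i.e.\ $a_1b_2-a_2b_1\le0$.
\item Over $\Upsilon_{\partial_ig_2}\setminus\Upsilon_{\partial_ig_1}$, (2) says the fibre of $\Upsilon_{g_2}\cap\{g_1>0\}$ is empty. So $a_1+x_ib_1\le0$ for all $x_i>-a_2/b_2$, and letting $x_i\downarrow-a_2/b_2$ again gives $a_1b_2-a_2b_1\le0$.
\item Degenerate cases: if $b_2\equiv0$, then $\Upsilon_{g_2}$ is invariant under $x_i$-translation, and PRT of $\Upsilon_{g_1}=\Upsilon_{g_2}\cap\{g_1>0\}$ forces $b_1\ge0$ on it. If $b_1\equiv0\not\equiv b_2$, this is incompatible with (2).
\end{itemize}

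Step~3 also claims more than Step~1 delivers. Step~1 showed $U$ is a component of $\{h>0\}$ only because (1) gave $\Upsilon_h\subseteq\Upsilon_{g_2}\times\RR$. Under (3) alone, $U$ is connected, open and PRT, but it is a component only if it is closed in $\{h>0\}$. That requires $g_1\le0$ on $\partial\Upsilon_{g_2}$; otherwise points $(\mathbf{x}_0,y)$ with $\mathbf{x}_0\in\partial\Upsilon_{g_2}$ and $g_1(\mathbf{x}_0)>0$ lie in $\overline{U}\cap\{h>0\}$. This does follow from (3): $\mathbf{x}_0+s\mathbf{1}\in\Upsilon_{g_2}$ for every $s>0$, $g_2(\mathbf{x}_0)=0$, and $g_1/g_2$ is nondecreasing in $s$. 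So $g_1(\mathbf{x}_0)>0$ would make $g_1/g_2$ blow up as $s\downarrow0$. The same check is needed for every $\partial^\alpha g_2\,y+\partial^\alpha g_1$.

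In your induction, the $x_j\to+\infty$ limit works because the $x_j^2$-coefficient of the Wronskian is the Wronskian of $(\partial_jg_2,\partial_jg_1)$. It must give the inequality on all of $\Upsilon_{\partial_jg_2}$, not just on $\Upsilon_{g_2}$. It does, but this uses the graph description of $\Upsilon_{g_2}$ over $\Upsilon_{\partial_jg_2}$, not merely PRT.

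Finally, in the nesting check with $\partial_i\partial^\alpha g_2\equiv0\not\equiv\partial_i\partial^\alpha g_1$, you need $\partial_i\partial^\alpha g_1>0$ on $\Upsilon_{\partial^\alpha g_2}$, while (3) only gives $\ge0$. This is rescued by a general fact: a nonzero multi-affine polynomial that is nonnegative on an open set is positive there. At a zero, its lowest-order Taylor term is a nonzero multi-affine form, which takes both signs.
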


    For univariate right-Noetherian polynomials, proper position can be characterized as follows.

\begin{thm}[Theorem 10.12 of \cite{FangMaGarding}]
\label{PositiveUnivariable}
Let $f_2, f_1\in \Ga_1$ be univariate monic polynomials of the same degree $d$. Then $f_2\prec f_1$ if and only if, for every $k=0,1,\ldots,d-1$, $r(f_2^{(k)}) \le r(f_1^{(k)})$ and $f_1^{(k)}(x)\le 0$ for all $x \in [r(f_2^{(k)}),r(f_1^{(k)}))$.
\end{thm}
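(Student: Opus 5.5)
Since $f_2$ and $f_1$ are monic of the same degree $d$, I would argue directly from Definition~\ref{FMGdefinition} applied to
\[
p(x,y)=f_2(x)y+f_1(x).
\]
Its nonzero partial derivatives are $\partial_y\partial_x^k p=f_2^{(k)}$ and
\[
p_k:=\partial_x^k p=f_2^{(k)}(x)\,y+f_1^{(k)}(x),\qquad 0\le k\le d .
\]
Two facts handle the easy parts. By Lemma~\ref{FangMaLinCorrespondence}, each $f_2^{(k)}$, up to a positive constant, lies in $\Ga_1$. Its component is therefore $\Upsilon_{f_2^{(k)}}=(r(f_2^{(k)}),\infty)\times\RR$. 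Also $p_d$ is a positive constant. So the whole content is the following: for each $k<d$, $\{p_k>0\}$ has a unique PRT component $\Upsilon_k$, and $\Upsilon_k\subseteq\Upsilon_{f_2^{(k)}}\cap\Upsilon_{k+1}$.

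First I would identify $\Upsilon_k$. On $x>r_k:=r(f_2^{(k)})$ we have $f_2^{(k)}>0$, so $\{p_k>0\}$ there is the supergraph $\{y>h_k(x)\}$ with
\[
h_k=-\frac{f_1^{(k)}}{f_2^{(k)}} .
\]
This supergraph is connected and contains all points with $x,y$ large. Any PRT component must contain such points, so it contains this supergraph. It cannot leak into $x\le r_k$: the requirement $\Upsilon_k\subseteq\Upsilon_{f_2^{(k)}}$ forces it to stay in $x>r_k$. Where $f_2^{(k)}<0$, the region is a subgraph and is never PRT. So the conditions become:
\begin{enumerate}
\item[(a)] $\{y>h_k\}$ over $(r_k,\infty)$ is a connected component of $\{p_k>0\}$, i.e.\ it does not connect across the vertical line $x=r_k$;
\item[(b)] it is PRT, i.e.\ $h_k$ is nonincreasing on $(r_k,\infty)$;
\item[(c)] $h_k\ge h_{k+1}$ on $(r_k,\infty)$.
\end{enumerate}
For (c) I would use the identity
\[
h_k'=-\frac{f_1^{(k+1)}+h_k f_2^{(k+1)}}{f_2^{(k)}},
\]
together with $f_2^{(k+1)}>0$ on $(r_k,\infty)$, which holds because $r_k\ge r_{k+1}$ by the right-Noetherian property. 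Given this, (c) is equivalent to (b). So everything reduces to the Wronskian-type inequality
\[
W_k:=f_1^{(k)}f_2^{(k+1)}-f_2^{(k)}f_1^{(k+1)}\le 0 \quad\text{on } (r_k,\infty),
\]
plus condition (a).

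Condition (a) is local at $x=r_k$. Since $f_2^{(k)}(r_k)=0$, the fibre of $\{p_k>0\}$ over $x=r_k$ is $\{f_1^{(k)}(r_k)>0\}$. If that fibre is nonempty, the component leaks past $x=r_k$, which violates containment in $\Upsilon_{f_2^{(k)}}$. So (a) is equivalent to $f_1^{(k)}(r_k)\le 0$. The boundary case $f_1^{(k)}(r_k)=0$ needs a short separate check of the local picture. When $f_1^{(k)}(r_k)<0$, (a) gives $h_k\to+\infty$ as $x\downarrow r_k$.

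It remains to show, for each $k$, that
\[
\bigl(W_k\le0 \text{ on } (r_k,\infty)\ \text{and}\ f_1^{(k)}(r_k)\le 0\bigr)
\iff
\bigl(r(f_2^{(k)})\le r(f_1^{(k)})\ \text{and}\ f_1^{(k)}\le0 \text{ on } [r(f_2^{(k)}),r(f_1^{(k)}))\bigr),
\]
assuming the corresponding statement for all higher $k$. I would run a downward induction on $k$, starting from $k=d-1$, where both polynomials are linear and the check is immediate.

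For ``$\Rightarrow$'': $h_k$ is nonincreasing, starts at $+\infty$ (or at $0$ in the boundary case), and tends to $-c$ at infinity. Its zeros are exactly the zeros of $f_1^{(k)}$ on $(r_k,\infty)$. Monotonicity then forces $f_1^{(k)}$ to be $\le 0$ up to its last root and positive afterwards, which gives the stated sign and root conditions.

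For ``$\Leftarrow$'', which I expect to be the main obstacle, I would show $h_k'\le0$ using the induction hypothesis $h_k\ge h_{k+1}$ in the form
\[
h_k'=\frac{f_2^{(k+1)}}{f_2^{(k)}}\,(h_{k+1}-h_k).
\]
This is circular as stated. So I would instead argue by contradiction. Suppose $h_k$ had a local minimum at some $x_*$ beyond which it increases. At such a point $h_k=h_{k+1}$, and the inductive monotonicity of $h_{k+1}$ gives a crossing incompatible with the sign of $W_k$. To rule out $x_*$, I would combine this with the root and sign hypothesis on $f_1^{(k)}$, which pins down where $h_k$ can change sign. If this analysis gets messy, I would fall back on a Hermite--Kakeya/interlacing argument for the pencil $f_1^{(k)}+s f_2^{(k)}$.
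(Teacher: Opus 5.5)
The paper does not prove this statement; it is quoted from Fang--Ma. So your argument has to stand on its own, and it has a genuine gap in the ``$\Leftarrow$'' direction.

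\textbf{What is correct.} Your reduction is sound. Write $r_k=r(f_2^{(k)})$ and $h_k=-f_1^{(k)}/f_2^{(k)}$. Then $f_2\prec f_1$ is equivalent to the following, for every $k<d$: $f_1^{(k)}(r_k)\le 0$, and $h_k$ is nonincreasing on $(r_k,\infty)$. Your ``$\Rightarrow$'' direction is fine, because it only uses the sign of $h_k$. Two small remarks:
\begin{itemize}
\item The case $f_1^{(k)}(r_k)=0$ needs no separate local check, since the fibre of $\{p_k>0\}$ over $x=r_k$ is then empty. Also, in that case $h_k$ need not tend to $0$ at $r_k$.
\item $p_d=d!\,(y+1)$ is not a constant. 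So (c) at $k=d-1$ reads $h_{d-1}\ge h_d\equiv -1$. This is harmless, because your identity $h_k'=\frac{f_2^{(k+1)}}{f_2^{(k)}}(h_{k+1}-h_k)$ still holds with $h_d\equiv-1$.
\end{itemize}

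\textbf{The gap.} The ``$\Leftarrow$'' direction is the substance of the theorem, and it is not established. Your crossing argument rules out an interior point $x_*$ with $h_k(x_*)=h_{k+1}(x_*)$ followed by an increase; that part needs no sign hypothesis. It does not touch the case that actually survives: $h_k<h_{k+1}$, i.e.\ $h_k'>0$, on a whole interval $(r_k,x_1)$ adjacent to the endpoint. The sign pattern of $f_1^{(k)}$ cannot exclude this. For example, $f_2=x^2-1$ and $f_1=(x+3)(x-1)$ satisfy the level-$0$ condition, yet $h_0=-1-\frac{2}{x+1}$ is increasing; only the level-$1$ condition fails. Your Hermite--Kakeya fallback is also unavailable, since right-Noetherian polynomials need not be real-rooted.

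\textbf{The missing step.} What is needed is an endpoint argument for the weighted difference
\[
v:=f_2^{(k)}\,(h_k-h_{k+1})=-f_1^{(k)}+f_1^{(k+1)}\,\frac{f_2^{(k)}}{f_2^{(k+1)}}=-\frac{W_k}{f_2^{(k+1)}}\qquad\text{on }(r_k,\infty).
\]
\begin{itemize}
\item A direct computation gives $v'=-f_2^{(k)}h_{k+1}'$, which is $\ge0$ by the inductive hypothesis.
\item As $x\downarrow r_k$, $v(x)\to -f_1^{(k)}(r_k)$. This also holds when $r_{k+1}=r_k$, where $h_k$ and $h_{k+1}$ may both blow up. In that case $f_2^{(k)}$ and $f_2^{(k+1)}$ vanish at $r_k$ to orders $m\ge2$ and $m-1$, so $f_2^{(k)}/f_2^{(k+1)}\to0$.
\item Hence $v\ge -f_1^{(k)}(r_k)\ge 0$ on $(r_k,\infty)$, i.e.\ $W_k\le 0$.
\end{itemize}
The only input needed at level $k$ is $f_1^{(k)}(r_k)\le0$, which follows from the stated root and sign condition. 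Without this step, or an equivalent one, your downward induction does not close.
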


We also recall the $\Upsilon$-dominance from \cite{LinInverseSigmaConvexity}. The extension to Fang-Ma-Garding polynomials is done by \cite{FangMaGarding}.

\begin{defn}[Definition 2.8 of \cite{LinInverseSigmaConvexity}]
\label{DefUpsilonDominance}
Let $f_2, f_1\in \Ga_1$ be univariate monic polynomials of degree $n$.  If $r(f_2^{(k)}) \leq r(f_1^{(k)})$ for every  $k=0, \cdots, n-1$, then we say $f_2 \lessdot f_1$.
\end{defn}

\begin{thm}[Theorem 2.4 of  \cite{LinInverseSigmaConvexity}]
\label{ThmUpsilonDominance}
Let $f_2, f_1$ be as before. Then $f_2\lessdot f_1$ if and only if $\Upsilon_{g_1} \subset \Upsilon_{g_2}$, where $g_i$ is the polarization of $f_i$ on $\mathbb{R}^n$.    
\end{thm}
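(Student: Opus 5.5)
We argue by induction on $n$, using the fact that each partial derivative of a polarization is again a polarization. For a monic $f$ of degree $n$ with polarization $g=\Pol_n f$ (after normalizing so that $f=\sum_k c_k\binom nk x^k$, $c_n=1$), we have $\partial_i g(\lambda)=\Pol_{n-1}(\tfrac1n f')(\lambda_1,\ldots,\widehat{\lambda_i},\ldots,\lambda_n)$. Iterating, freezing $k$ coordinates and letting them tend to $+\infty$ gives
\[
\frac{g(\mu,s,\ldots,s)}{s^k}\longrightarrow c\,\Pol_{n-k}\bigl(f^{(k)}\bigr)(\mu)\quad(c>0).
\]
We also use the basic facts from Definition~\ref{FMGdefinition}: $\Upsilon_g\subseteq\Upsilon_{\partial_i g}$, and $\Upsilon_g$ is the PRT component of $\{g>0\}$. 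On the diagonal, $g(x,\ldots,x)=f(x)$ and the diagonal ray is in PRT directions, so $\Upsilon_g\cap\{x\mathbf 1\}=\{x\mathbf 1: x>r(f)\}$.

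\emph{Necessity.} Assume $\Upsilon_{g_1}\subset\Upsilon_{g_2}$. The diagonal statement immediately gives $r(f_2)\le r(f_1)$. For $k\ge1$, take $x>r(f_1^{(k)})$ and consider the points $(x,\ldots,x,s,\ldots,s)$ with $k$ entries equal to $s$. I would show that these lie in $\Upsilon_{g_1}$ for $s\gg1$. The heuristic is that, after dividing by $s^k$, $g_1$ and its derivatives converge to the polarization of $f_1^{(k)}$ (and its derivatives) at $x\mathbf 1\in\Upsilon_{\Pol f_1^{(k)}}$. The PRT path from there to $+\infty\cdot\mathbf 1$ stays inside $\{g_1>0\}$ by a compactness argument on that path. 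Hence these points lie in $\Upsilon_{g_2}$ for all large $s$. The same limit applied to $g_2$ then forces $x\mathbf 1\in\overline{\Upsilon_{\Pol f_2^{(k)}}}$, so $x\ge r(f_2^{(k)})$. Letting $x\downarrow r(f_1^{(k)})$ gives $r(f_2^{(k)})\le r(f_1^{(k)})$.

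\emph{Sufficiency.} Assume $f_2\lessdot f_1$. Then $f_2'\lessdot f_1'$ as monic polynomials of degree $n-1$ after rescaling by $1/n$. By the induction hypothesis in $n-1$ variables (the case $n=1$ is the statement $(r(f_1),\infty)\subset(r(f_2),\infty)$), we get $\Upsilon_{\partial_i g_1}\subset\Upsilon_{\partial_i g_2}$ for each $i$. Hence
\[
\Upsilon_{g_1}\subset\Omega:=\bigcap_i\Upsilon_{\partial_i g_2}.
\]
It then remains to prove $g_2>0$ on $\Upsilon_{g_1}$. Once that holds, $\Upsilon_{g_1}$ is a connected subset of $\{g_2>0\}$ containing $x\mathbf 1$ with $x>r(f_1)\ge r(f_2)$, so it lies in the PRT component $\Upsilon_{g_2}$.

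The main obstacle is this positivity step. The set $U=\Upsilon_{g_1}\cap\{g_2>0\}$ is open and nonempty, and I would show it is relatively closed in the connected set $\Upsilon_{g_1}$. So suppose $\lambda\in\Upsilon_{g_1}$ with $g_2(\lambda)=0$. Since $\lambda\in\Omega$, $g_2$ is strictly increasing in each coordinate near $\lambda$, so $\{g_2=0\}\cap\Omega$ is a smooth hypersurface that is a graph over every coordinate direction. My first attempt would be to slide $\lambda$ along $\{g_2=0\}$ inside $\Omega$ while decreasing the spread of its coordinates, keeping $g_1>0$. The goal is to reach the diagonal, where $g_2=0$ means the point is at most $r(f_2)\mathbf 1$, contradicting $g_1>0$ only at points above $r(f_1)\mathbf 1\ge r(f_2)\mathbf 1$. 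If the sliding argument is hard to control, the fallback is to use Lin's explicit description of $\Upsilon_g$ for right-Noetherian polarizations in terms of the root sequence applied to sub-polarizations. Plugging the inequalities $r(f_2^{(k)})\le r(f_1^{(k)})$ directly into that description would make the inclusion immediate.
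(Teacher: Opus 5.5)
You have a genuine gap in the sufficiency direction, the implication $f_2\lessdot f_1\Rightarrow\Upsilon_{g_1}\subset\Upsilon_{g_2}$.

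Your reduction is correct. Induction gives $\Upsilon_{g_1}\subset\Omega$, and connectedness then reduces everything to showing $g_2>0$ on $\Upsilon_{g_1}$. But that positivity \emph{is} the content of this implication, and neither of your suggestions proves it.

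In the sliding argument nothing keeps $g_1>0$ along a path in $\{g_2=0\}\cap\Omega$. A pairwise averaging move at fixed coordinate sum raises both $g_1$ and $g_2$ (wherever their mixed second partials are positive). The compensating shift back onto $\{g_2=0\}$ lowers $g_1$. Your hypothesis gives no way to compare these two effects, because $f_2\lessdot f_1$ only compares the roots of $f_i^{(k)}$, which is information on the diagonal.

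The natural pointwise comparison would be monotonicity of $g_1/g_2$, i.e.\ condition (3) of Theorem~\ref{PositiveDerivative}. That condition is equivalent to proper position, which is strictly stronger than dominance. For example, $f_1=x^2-1$ and $f_2=(x+3)^2-1$ satisfy $f_2\lessdot f_1$. Yet $x\mathbf 1$ with $-2<x<-1$ lies in $\Upsilon_{g_2}\cap\{g_1>0\}$ and not in $\Upsilon_{g_1}$, so conditions (2)--(3) of Theorem~\ref{PositiveDerivative} fail.

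Your fallback, an explicit description of $\Upsilon_g$ that is monotone in the root sequence, is just a restatement of the sufficiency claim. Invoking it amounts to citing Lin, which is exactly what the paper does: it gives no proof of this statement. As written, you have proved only the implication $\Upsilon_{g_1}\subset\Upsilon_{g_2}\Rightarrow f_2\lessdot f_1$. The other direction needs a genuinely new global ingredient at the positivity step.

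Your necessity argument is essentially sound but needs two repairs.

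First, the membership $(x\mathbf 1,s\mathbf 1)\in\Upsilon_{g_1}$ for large $s$ is not a compactness statement, since the path to $+\infty\cdot\mathbf 1$ is unbounded. Instead, raise the first $n-k$ coordinates from $x$ to $s$ and use $g_1(y\mathbf 1,s\mathbf 1)=\sum_{j\le k}\binom kj (s-y)^j\frac{(n-j)!}{n!}f_1^{(j)}(y)$. For $y$ in a bounded interval the $j=k$ term dominates once $s-y$ is large. Once $y>r(f_1)$, all terms are nonnegative and the $j=0$ term is positive.

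Second, the limit $s^{-k}g_2(x\mathbf 1,s\mathbf 1)\to c\,f_2^{(k)}(x)$ only gives $f_2^{(k)}\ge0$ on $[x,\infty)$. That does not imply $x\ge r(f_2^{(k)})$: take $f_2=x^3-a$ with $a\ge 0$ and $k=1$, where $f_2'=3x^2\ge0$ everywhere but $r(f_2')=0$. Use instead the inclusion $\Upsilon_{g_2}\subset\Upsilon_{\partial_J g_2}$ from Definition~\ref{FMGdefinition}, with $J$ the last $k$ coordinates. Here $\partial_Jg_2$ does not depend on those coordinates and is a positive multiple of $\Pol_{n-k}(f_2^{(k)})$ in the remaining ones. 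So $x\mathbf 1\in\Upsilon_{\Pol_{n-k}f_2^{(k)}}$, and hence $x>r(f_2^{(k)})$.
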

       
        We now define the cone condition.
        \begin{defn}
            Let $f(x) = \sum_{k=0}^{d} c_k \binom{n}{k} x^k$, with $c_d>0$, be a right-Noetherian polynomial of degree $d\le n$, and let $g(\boldsymbol{x})$ be its polarization on $\mathbb{R}^n$. We define the $\Upsilon^k$ cone by
            \[\Upsilon^k_g=\bigcap_{|\alpha|=k} \Upsilon_{\partial^\alpha g}.\]
            If \(x \in \Upsilon^1_g\), we say that $x$ satisfies the cone condition. If $\omega$ is a given K\"ahler form, then we say that $\chi$ satisfies the cone condition if the eigenvalues $\lambda\!\left[\omega^{-1}\chi\right]$ satisfy the cone condition. When $d=n$, we also say that $\chi$ is a $C$-subsolution if $\chi$ satisfies the cone condition.
            \label{DefnConeCondition}
        \end{defn}

        By Section~9.2 of \cite{FangMaGarding}, the $\Upsilon^k$ cone defined here is the same as that in Definition~2.6 of \cite{LinInverseSigmaConvexity}. So we can use the following convexity property.

        \begin{prop}
        Let $f(x) = \sum_{k=0}^{n}c_k \binom{n}{k} x^k$, with $c_n=1$, be a right-Noetherian polynomial, and let $g(\boldsymbol{x})$ be its polarization on $\mathbb{R}^n$. Then $\Upsilon^k_g$ is convex for all $k=0, 1, ..., n-1$.
        Moreover, the set of Hermitian-matrices $A$ such that its eigenvalues $\lambda(A)\in \Upsilon^k_g$ is also convex. 
        \label{Convexity}
        \end{prop}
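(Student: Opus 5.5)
The plan is to deduce both claims from results already recorded in this section, with Lin's convexity theorem doing the main work.

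\textbf{Step 1: identify the cones as sets built from polarizations of derivatives.} By symmetry of $g$ and multi-affinity, each $\partial^\alpha g$ with $|\alpha|=k$ and $\alpha$ having a repeated index vanishes identically. So $\Upsilon_{\partial^\alpha g}=\RR^n$ by the convention $\Upsilon_0=\RR^n$, and only multi-affine derivatives $\partial_{i_1}\cdots\partial_{i_k}g$ with distinct indices matter. For such a derivative, $\partial_{i_1}\cdots\partial_{i_k}g$ is, up to a positive constant, the polarization on the remaining $n-k$ variables of $f^{(k)}$. Since $f$ is right-Noetherian, $f^{(k)}$ is again right-Noetherian: its root sequence is a tail of that of $f$.

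\textbf{Step 2: convexity of $\Upsilon^k_g$ in $\RR^n$.} As remarked just before the statement, $\Upsilon^k_g$ coincides with Lin's $\Upsilon^k$ cone (Definition~2.6 of \cite{LinInverseSigmaConvexity}), so convexity follows from Lin's convexity theorem \cite{LinInverseSigmaConvexity, LinInverseSigmaConvexityCorrigendum}. Since an intersection of convex sets is convex, it suffices to know that each factor $\Upsilon_{\partial^\alpha g}$ is convex. That is, one needs that the $\Upsilon$-stable component of the polarization of a right-Noetherian polynomial is convex. I expect this to be the substantive ingredient; it is exactly Lin's theorem, and I would cite it rather than reprove it.

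If a self-contained argument were wanted, I would try induction on the degree. The PRT component $\Upsilon_{g}$ can be described as the set of $x$ for which $g(x+s\mathbf 1)>0$ for all $s\ge0$, together with the analogous condition for all its derivatives. One would then use a G\aa rding-type concavity of $g^{1/d}$ along directions in the cone; this is delicate because $g$ need not be hyperbolic, which is why I defer to the cited result.

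\textbf{Step 3: the matrix statement.} The set $\{A:\lambda(A)\in\Upsilon^k_g\}$ is defined by a symmetric, convex subset of $\RR^n$. It is symmetric because $g$ is symmetric, so permuting coordinates permutes the derivatives $\partial^\alpha g$ and hence preserves the intersection. The standard fact applies: if $\Gamma\subset\RR^n$ is convex and permutation invariant, then $\{A \text{ Hermitian}:\lambda(A)\in\Gamma\}$ is convex. To see this, take $A,B$ with $\lambda(A),\lambda(B)\in\Gamma$. By the Schur--Horn / Ky Fan majorization theorem, $\lambda(tA+(1-t)B)$ lies in the convex hull of the permutations of $t\lambda(A)+(1-t)\lambda(B')$, where the eigenvalues are ordered compatibly. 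Equivalently, $\lambda(tA+(1-t)B)$ is majorized by $t\lambda^\downarrow(A)+(1-t)\lambda^\downarrow(B)$. A point majorized by an element of $\Gamma$ lies in the convex hull of that element's permutations, hence in $\Gamma$. This completes the proof modulo Lin's theorem in Step 2.
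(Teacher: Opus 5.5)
Your proposal is correct and follows essentially the same route as the paper. Convexity of each factor $\Upsilon_{\partial^\alpha g}$ comes from Lin's convexity theorem (Theorem 3.1 of \cite{LinInverseSigmaConvexity}) applied to the right-Noetherian derivatives $f^{(k)}$, so the intersection $\Upsilon^k_g$ is convex; the matrix statement then follows, exactly as in the paper, from the Ky Fan majorization $\lambda(tA+(1-t)B)\prec t\lambda^\downarrow(A)+(1-t)\lambda^\downarrow(B)$, Rado's (Hardy--Littlewood--P\'olya) characterization of majorization, and the permutation invariance of $\Upsilon^k_g$.
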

        \begin{proof}
        By Theorem 3.1 of \cite{LinInverseSigmaConvexity}, $\Upsilon_g$ is convex. Using Definition 2.6 of \cite{LinInverseSigmaConvexity} and its equivalent characterization, $\Upsilon^k_g$ is also convex. Then we apply the Ky Fan--Lidskii inequality and Rado's corollary of the Hardy--Littlewood--P\'{o}lya theorem to see that the corresponding subset of Hermitian-matrices is also convex. For example, see Corollary 2.20 of \cite{ChenNieXuNumericalCriterion} for more details.
        \end{proof}

    Now let us recall the following useful lemma proved by Lin in \cite{LinInverseSigmaConvexity} and used in \cite{LinInverseSigmaSolvability}.
    \begin{lem}
    Let $f(x) = \sum_{k=0}^{n}c_k \binom{n}{k} x^k$, with $c_n=1$, be a strictly right-Noetherian polynomial of degree $n$, and let $g(\boldsymbol{x})$ be its polarization on $\mathbb{R}^n$. Then $\Upsilon^1_g \cap \{g=0\} = \partial\Upsilon_g$.
    \label{rewrite-equation}
    \end{lem}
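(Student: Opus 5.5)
We prove the two inclusions separately. Throughout we use three facts: $g$ is multi-affine and symmetric; each first derivative $\partial_i g$ is the polarization, in the remaining $n-1$ variables, of $\frac{1}{n}f'$ (up to the normalization $\binom{n-1}{k-1}$ versus $\binom{n}{k}$); and, by Definition~\ref{FMGdefinition} together with Lemma~\ref{FangMaLinCorrespondence}, $g\in\Ga_n$ with the nesting $\Upsilon_g\subseteq\Upsilon_{\partial_i g}$. This nesting gives $\Upsilon_g\subseteq\Upsilon^1_g$. Since $\Upsilon^1_g$ is open and convex (Proposition~\ref{Convexity}), we also get $\overline{\Upsilon_g}\subseteq\overline{\Upsilon^1_g}$.

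\emph{Inclusion $\partial\Upsilon_g\subseteq\Upsilon^1_g\cap\{g=0\}$.} Let $x\in\partial\Upsilon_g$. Since $\Upsilon_g$ is a connected component of the open set $\{g>0\}$, its boundary lies in $\{g=0\}$, so $g(x)=0$. It remains to show that $x$ lies in the open cone $\Upsilon^1_g$, and not merely in its closure. Strict right-Noetherianity enters exactly here. Along the diagonal, $g(s,\dots,s)=f(s)$, and $\partial_i g(s\mathbf 1)=\frac1n f'(s)>0$ for $s>r(f')$; this is what makes $r(f)>r(f')$ relevant. For general $x$, we argue by contradiction. Suppose $x\in\partial\Upsilon^1_g$, so $\partial_i g(x)=0$ for some $i$ with $x\in\overline{\Upsilon_{\partial_i g}}$. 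Because $g$ is multi-affine,
\[
g(x)=x_i\,\partial_i g(x)+g|_{x_i=0}(x),
\]
and hence $g|_{x_i=0}(x)=0$ as well. The function $t\mapsto g(x+te_i)$ is affine with slope $\partial_i g(x)=0$, so it vanishes identically. The points $x+te_i$ for $t>0$ lie in $\overline{\Upsilon_g}$ by the positive ray test, while $g$ vanishes along this whole ray. We will contradict this using the NRT property. Lemma~\ref{NRT-redundant} gives that $\Upsilon_g$ is NRT, and we will also use the strict gap $r(f)>r(f')$. Concretely, we induct on $n$. Restricting to the hyperplane $x_i$ free, the polynomial $\partial_i g$ is the polarization of a (rescaled) $f'$. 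The condition $\partial_i g(x)=0$ with $g\equiv0$ along the ray forces the remaining variables to lie on $\partial\Upsilon_{\partial_i g}\cap\partial\Upsilon_{g|_{x_i=0}}$. Taking the limit $t\to\infty$ of the ray, this would make $\Upsilon_g$ contain points arbitrarily close to where both $f$- and $f'$-type conditions degenerate simultaneously. Comparing with the diagonal, where $r(f)>r(f')$ separates these, yields the contradiction. I expect this step to be the main obstacle. What I would try first is to replace the ray argument with a cleaner statement: for strictly right-Noetherian $f$, $\partial_i g>0$ on $\overline{\Upsilon_g}$. One would prove this by a Gårding-type inequality, namely concavity of $g^{1/n}$, or of $\log g$, on $\Upsilon_g$. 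That inequality bounds $\partial_i g$ from below by a positive multiple depending on the gap $r(f)-r(f')$, uniformly up to the boundary.

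\emph{Inclusion $\Upsilon^1_g\cap\{g=0\}\subseteq\partial\Upsilon_g$.} Let $x\in\Upsilon^1_g$ with $g(x)=0$. Since all $\partial_i g(x)>0$, the point $x+t\mathbf 1$ satisfies $g(x+t\mathbf 1)>0$ for small $t>0$. We need these points to lie in the component $\Upsilon_g$. We use convexity and connectedness of $\Upsilon^1_g$. Along the segment from $x$ to a far diagonal point $s\mathbf 1$ with $s>r(f)$, which lies in $\Upsilon_g$, the function $g$ is increasing in every positive direction inside $\Upsilon^1_g$. Shifting the segment by $t\mathbf 1$ therefore keeps $g>0$ on it. Then $x+t\mathbf 1\in\Upsilon_g$, and letting $t\to0$ gives $x\in\overline{\Upsilon_g}$ with $g(x)=0$, so $x\in\partial\Upsilon_g$. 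The positivity of $g$ along the shifted path is the delicate point. I would obtain it from monotonicity: on the convex set $\Upsilon^1_g$ every $\partial_i g>0$, so $g(y+t\mathbf 1)>g(y)$. It then suffices to use a path from $x$ to $s\mathbf 1$ that is monotone in the partial order and stays inside $\Upsilon^1_g$, namely $y(\tau)=\max(x,\tau\mathbf 1)$ componentwise, which lies in $\Upsilon^1_g$ by the positive ray test. Along this path $g$ is nondecreasing from $0$ and strictly increasing once perturbed.
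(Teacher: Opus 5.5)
\textbf{The gap: your first inclusion is not proved.} The missing step is the one you flag yourself: showing that a point $x\in\partial\Upsilon_g$ lies in the \emph{open} cone $\Upsilon^1_g$. This is the only place where strict right-Noetherianity is needed, and none of your arguments establishes it.

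Your contradiction argument correctly produces a ray $x+te_i$, $t\ge 0$, contained in $\partial\Upsilon_g$ with $g\equiv 0$ and $\partial_i g\equiv 0$ on it. But NRT does not rule this out. For $f=x^n$ (right-Noetherian, not strictly), $\Upsilon_g=\Gamma_n^+$ is NRT, and $\partial\Gamma_n^+$ contains exactly such rays. So the contradiction has to come from $r(f)>r(f')$. You only use that gap on the diagonal, where $\partial_i g(r(f)\mathbf 1)=\frac1n f'(r(f))>0$, and nothing in the sketch transfers this to a non-diagonal boundary point.

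The induction is also not actually set up. The polynomial $g|_{x_i=0}$ is the polarization of $\sum_k c_k\binom{n-1}{k}x^k$. Its leading coefficient $c_{n-1}$ can be zero or negative, so $\partial\Upsilon_{g|_{x_i=0}}$ has no meaning in general.

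Your fallback does not work as stated either. Concavity of $g^{1/n}$ or $\log g$ cannot by itself force $\partial_i g>0$ on $\partial\Upsilon_g$: $\sigma_n^{1/n}$ is concave on $\Gamma_n^+$, yet $\partial_i\sigma_n$ vanishes on parts of $\partial\Gamma_n^+$. You do not say how the gap $r(f)-r(f')$ would enter. Moreover, a uniform positive lower bound for $\partial_i g$ on $\overline{\Upsilon_g}$ is false. For $f=x^2-1$ one has $g=x_1x_2-1$ and $\partial_1 g=x_2=1/x_1\to 0$ along $\partial\Upsilon_g$.

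The paper does not prove this inclusion from scratch. It takes $\partial\Upsilon_g\subset\Upsilon^1_g\cap\{g=0\}$ from Theorem~1.2 and Definition~2.7 of \cite{LinInverseSigmaConvexity}, and that is where the strict gap is used. You need that input, or a genuine proof of it.

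\textbf{The second inclusion is fine.} It is correct and matches the paper's argument in substance. The monotone path $\max(x,\tau\mathbf 1)$ is an unnecessary detour, though. The ray $x+t\mathbf 1$, $t>0$, already lies in $\Upsilon^1_g$ by PRT, and $g$ strictly increases along it from $g(x)=0$. The ray eventually enters $\Upsilon_g$, again by PRT. By connectedness the whole ray lies in $\Upsilon_g$, so $x\in\overline{\Upsilon_g}\setminus\Upsilon_g$.

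A minor point: $g\in\mathrm{G}_n$ follows from Theorem~\ref{kappa-polarization} together with Lemma~\ref{NRT-redundant}, not from Lemma~\ref{FangMaLinCorrespondence} alone.
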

    \begin{proof}
    By Theorem 1.2 and Definition 2.7 of \cite{LinInverseSigmaConvexity}, $\partial\Upsilon_g\subset \Upsilon^1_g \cap \{g=0\}$. Conversely, if $\boldsymbol{x}\in \Upsilon^1_g \cap \{g=0\}$, then $\boldsymbol{x}+(s,s,...,s) \in \Upsilon^1_g$ since $\Upsilon^1_g$ passes PRT. So $g(\boldsymbol{x}+(s,s,...,s))>0$ for all $s>0$. This implies that $\boldsymbol{x}+(s,s,...,s)\in \Upsilon_g$ for all $s>0$. We also know that there exists $\delta>0$ such that $g(\boldsymbol{x}+(s,s,...,s))<0$ for all $-\delta<s<0$. This implies that $\boldsymbol{x}\in\partial\Upsilon_g$.
    \end{proof}
        
    Now we start to study differential forms on a manifold with complex dimension $n$. Assume that $\omega$ is a given K\"ahler form, $\chi$ is a real closed $(1,1)$ form, $c_n=1$, and 
    \[
	   f(x) = \sum_{k=0}^{n} c_k \binom{n}{k} x^k, \quad g(\lambda) = \sum_{k=0}^{n} c_k \sigma_k(\lambda), \quad	g(\chi) = \sum_{k=0}^{n} c_k\binom{n}{k} \chi^k \wedge \omega^{n-k}.
        \]
    For any $l$, we define the formal derivative with respect to $\chi$
    \[
    g^{(l)}(\chi)=\sum_{k=l}^{n} c_k\binom{n}{k} \frac{k!}{(k-l)!} \chi^{k-l} \wedge \omega^{n-k} = \frac{n!}{(n-l)!}\sum_{k=l}^{n} c_k\binom{n-l}{k-l} \chi^{k-l} \wedge \omega^{n-k}.
    \]

    Recall the following classical definition from Demailly's book:
    \begin{defn}[Definition III.1.1 of \cite{DemaillyBook}]
    Let $X$ be a complex manifold of dimension $n$, and let $u$ be a real $(p,p)$-form on $X$. We say that $u$ is \emph{strongly positive} if, at every point $x\in X$, it can be written as
    \[
    u_x=\sum_s c_s\,\sqrt{-1}\alpha_{s,1}\wedge\overline{\alpha_{s,1}}\wedge\cdots\wedge\sqrt{-1}\alpha_{s,p}\wedge\overline{\alpha_{s,p}},
    \]
    where $c_s\ge 0$ and $\alpha_{s,j}\in T^{1,0*}_xX$. We say that $u$ is \emph{positive} if, for every $\beta_1,\ldots,\beta_{n-p}\in T^{1,0*}_xX$,
    \[   u_x\wedge\sqrt{-1}\beta_1\wedge\overline{\beta_1}\wedge\cdots\wedge\sqrt{-1}\beta_{n-p}\wedge\overline{\beta_{n-p}}
    \]
    is a nonnegative $(n,n)$-form.
    \end{defn}

    Motivated by this, if \[   u_x\wedge\sqrt{-1}\beta_1\wedge\overline{\beta_1}\wedge\cdots\wedge\sqrt{-1}\beta_{n-p}\wedge\overline{\beta_{n-p}}>0
    \]
    for all \[\sqrt{-1}\beta_1\wedge\overline{\beta_1}\wedge\cdots\wedge\sqrt{-1}\beta_{n-p}\wedge\overline{\beta_{n-p}}\neq 0,\]
    then we say that $u$ is strictly positive.
    
We need the following lemma, which characterizes the $\Upsilon^k$-cone in Definition \ref{DefnConeCondition}:
\begin{lem}
\label{positivity}
If $f$ is right-Noetherian, then the eigenvalue $\lambda[\omega^{-1}\chi]\in \Upsilon_g$ if and only if $g^{(l)}(\chi)$ is a strictly positive $(n-l,n-l)$ form for all $l=0,1,...,n-1$. More generally, the eigenvalue $\lambda[\omega^{-1}\chi]\in \Upsilon^p_g$ if and only if $g^{(l)}(\chi)$ is a strictly positive $(n-l,n-l)$ form for all $l=p,...,n-1$.
\end{lem}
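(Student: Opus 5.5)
Fix a point and choose coordinates so that $\omega=\sqrt{-1}\sum dz_i\wedge d\bar z_i$ and $\chi=\sqrt{-1}\sum\lambda_i dz_i\wedge d\bar z_i$. The plan is to translate strict positivity of $g^{(l)}(\chi)$ into sign conditions on derivatives of $g$, and then invoke Lin's description of the cones.

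\textbf{Step 1: compute the coefficients of $g^{(l)}(\chi)$.} Write $dV_I$ for the product of $\sqrt{-1}dz_i\wedge d\bar z_i$ over $i\in I$. A direct expansion shows that for $|I|=n-l$,
\[
g^{(l)}(\chi)=c\sum_{|J|=l}\partial_J g(\lambda)\,dV_{J^c},\qquad \partial_J g=\prod_{j\in J}\partial_{x_j}\, g ,
\]
with a positive combinatorial constant $c$ (a power of factorials). To check this, note that $\partial_J g=\sum_k c_k\sigma_{k-l}(\lambda|_{J^c})$. The coefficient of $dV_{J^c}$ in $\chi^{k-l}\wedge\omega^{n-k}$ is $(k-l)!(n-k)!\,\sigma_{k-l}(\lambda|_{J^c})$. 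Combined with $\binom{n}{k}k!/(k-l)!$, this gives the common factor $n!$.

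\textbf{Step 2: reduce strict positivity to positive coefficients.} I claim that a diagonal form $\sum_I a_I dV_I$ is strictly positive if and only if every $a_I>0$.

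For necessity, wedge the form with $dV_{I^c}$ built from the coordinate covectors. Only the term $a_I$ survives.

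For sufficiency, wedging with $\sqrt{-1}\beta_1\wedge\bar\beta_1\wedge\cdots\wedge\sqrt{-1}\beta_{l}\wedge\bar\beta_{l}$ gives $\sum_I a_I\,|\det B_{I^c}|^2$ times the volume form. Here $B_{I^c}$ is the minor of the matrix of the $\beta$'s in the columns $I^c$. Since this product of $\beta$'s is nonzero, the $\beta$'s are independent, so some minor is nonzero and the sum is positive.

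Consequently, $g^{(l)}(\chi)$ is strictly positive for all $l\ge p$ if and only if $\partial_J g(\lambda)>0$ for every multi-affine index set $J$ with $p\le |J|\le n-1$. (For $|J|=n$ the derivative is constant equal to $1$.)

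\textbf{Step 3: compare with the cones.} It remains to show that $\lambda\in\Upsilon^p_g$ if and only if $\partial_J g(\lambda)>0$ for all $|J|\ge p$. Since $g$ is multi-affine, derivatives with repeated indices vanish, and $\Upsilon_0=\RR^n$ by convention, so $\Upsilon^p_g=\bigcap_{|J|=p}\Upsilon_{\partial_J g}$.

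The forward direction comes from Definition~\ref{FMGdefinition}(2): the component of $\partial_Jg$ lies in the components of all its derivatives, so $\lambda$ lies in $\Upsilon_{\partial_{J'}g}$ for all $J'\supseteq J$, and in particular $\partial_{J'}g(\lambda)>0$. Here $g\in\Ga$ by Theorem~\ref{kappa-polarization}, Lemma~\ref{FangMaLinCorrespondence} and Lemma~\ref{NRT-redundant}.

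The converse is the main obstacle. Positivity of all higher derivatives at $\lambda$ must force $\lambda$ into the correct connected component of $\{\partial_Jg>0\}$. I would use the equivalent characterization in Definition~2.6 and Theorem~1.2 of \cite{LinInverseSigmaConvexity}, as cited in the proof of Proposition~\ref{Convexity}. This characterization says that $\Upsilon^p_g$ is exactly the set where all such derivatives are positive.

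Failing that, I would argue by downward induction on $|J|$. Suppose $\lambda\in\Upsilon_{\partial_{J\cup\{i\}}g}$ for all $i\notin J$. Then along the ray $\lambda+s(1,\dots,1)$ with $s\ge0$, the derivative $\frac{d}{ds}\partial_Jg=\sum_i\partial_i\partial_Jg>0$ at every point, since these points stay in the PRT cones. So $\partial_Jg$ stays positive along the whole ray, and the ray connects $\lambda$ to points far out along the diagonal. Those far points lie in the PRT component $\Upsilon_{\partial_Jg}$. Hence $\lambda\in\Upsilon_{\partial_J g}$, which completes the induction.
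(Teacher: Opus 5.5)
Your proposal is correct and follows the same route as the paper. You diagonalize at a point, identify $g^{(l)}(\chi)=n!\sum_{|I|=n-l}(\partial_{I^c}g)\,\beta_I$, test against the coordinate forms $\beta_{I^c}$, and match the resulting inequalities $\partial_J g(\lambda)>0$ with membership in $\Upsilon^p_g$. The only difference is that you prove the two cited ingredients yourself, and both arguments are sound. You get the sufficiency direction of strict positivity from the minors $|\det B_{I^c}|^2$, where the paper appeals to a strict version of Demailly's ``strongly positive implies positive''. You get the cone characterization from Definition~\ref{FMGdefinition} plus your downward induction along the diagonal ray, where the paper cites Lemma~2.5 of \cite{LinInverseSigmaConvexity}.
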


\begin{proof}
Fix a point \(x\in X\). Choose an \(\omega\)-unitary
holomorphic coframe at \(x\) in which
\[
\omega
=
\sum_{j=1}^{n} \sqrt{-1} \beta_j \wedge \bar\beta_j,
\qquad
\chi
=
\sum_{j=1}^{n}\lambda_j \sqrt{-1} \beta_j \wedge \bar\beta_j,
\qquad
\beta_j
=
dz^j.
\]

For a subset \(I=\{i_1,\ldots,i_p\}\subset\{1,\ldots,n\}\),
write
\[
\beta_I
=\sqrt{-1} \beta_{i_1} \wedge \bar\beta_{i_1}\wedge\cdots \wedge \sqrt{-1}\beta_{i_p} \wedge \bar\beta_{i_p}.
\]

Then
    \[
    g^{(l)}(\chi)=\frac{n!}{(n-l)!}\sum_{k=l}^{n} c_k\binom{n-l}{k-l} \chi^{k-l} \wedge \omega^{n-k}=n!
\sum_{|I|=n-l}
\left[
\sum_{k=l}^{n}
c_k
\sigma_{k-l}(\lambda_i, i \in I)
\right]\beta_I.
    \]

Note that
\[
\partial_{I^c}\sigma_k(\lambda) = \sigma_{k-l}(\lambda_i, i \in I).
\]
Consequently,
\[
g^{(l)}(\chi)
=n!
\sum_{|I|=n-l}
(\partial_{I^c} g)\beta_I.
\]

Thus we can test it against $\beta_{I^c}$ and translate the positivity to $\partial_{I^c} g>0$. Remark that Demailly has proved that any strongly positive form is positive in his book \cite{DemaillyBook}, and we have used the strict inequality version of that here. Finally, the proof follows from Lemma 2.5 of \cite{LinInverseSigmaConvexity}.

\end{proof}

By Lemma \ref{positivity}, if $n\ge 2$ and $c_{n-1}=0$, $\lambda(\omega^{-1}\chi)\in \Upsilon_g^1$ implies that $\chi$ is K\"ahler. For the polynomial $f_2$, we can achieve this condition by studying the polynomials $\hat f(x)=f(x - c^{(2)}_{n-1})$ and the class $\hat\chi=\chi+c^{(2)}_{n-1}\omega$ instead. Then $\hat c_n$ is still $1$ but $\hat c_{n-1}=0$.

As a corollary, we prove the following restriction property:
\begin{cor}
\label{restriction}
Suppose that 
\[
\tilde f(x)=\sum_{k=0}^{n+1} \tilde c_k \binom{n+1}{k} x^k, \quad\text{with }\tilde c_{n+1}=1,
\]
is a right-Noetherian polynomial and $\tilde g(\lambda)$ is the polarization of $\tilde f$. Consider 
\[
f(x)=\frac{1}{n+1}\tilde f'(x)=\sum_{k=0}^{n} c_k \binom{n}{k} x^k, \quad c_k=\tilde c_{k+1}
\]
and its polarization $g(\lambda)$. Then if $\omega$ is K\"ahler on the $(n+1)$-dimensional manifold $\mathcal{X}=X\times\mathcal{A}$ and $\lambda(\omega^{-1}\chi) \in \Upsilon^1_{\tilde g}$, then $\lambda( (\omega|_{X_{z_0}})^{-1}\chi|_{X_{z_0}}) \in \Upsilon_{g}$ for all $z_0\in \mathcal{A}$.
\end{cor}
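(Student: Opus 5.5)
The plan is to translate both the hypothesis and the conclusion into positivity of forms via Lemma~\ref{positivity}, and then restrict to the slice. Since $\tilde f$ is right-Noetherian of degree $n+1$ with polarization $\tilde g$ on $\mathbb{R}^{n+1}$, the hypothesis $\lambda(\omega^{-1}\chi)\in\Upsilon^1_{\tilde g}$ is equivalent, by Lemma~\ref{positivity} applied on the $(n+1)$-dimensional manifold $\mathcal{X}$ with $p=1$, to the following: $\tilde g^{(l)}(\chi)$ is a strictly positive $(n+1-l,n+1-l)$ form for every $l=1,\ldots,n$. For the conclusion, note that $f=\frac{1}{n+1}\tilde f'$ is again right-Noetherian, since its root sequence is the tail of that of $\tilde f$, and it is monic because $c_n=\tilde c_{n+1}=1$. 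So Lemma~\ref{positivity} applies on $X_{z_0}$, and it suffices to show that $g^{(l)}(\chi|_{X_{z_0}})$ is strictly positive on $X_{z_0}$ for all $l=0,\ldots,n-1$, with $g^{(l)}$ computed using $\omega|_{X_{z_0}}$.

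The key algebraic step is the identity relating the two families of formal derivatives. Write $\tilde\omega,\tilde\chi$ for the forms on $\mathcal{X}$ and $\omega',\chi'$ for their restrictions. Using $c_k=\tilde c_{k+1}$ and the formula
\[
\tilde g^{(l+1)}(\tilde\chi)=\frac{(n+1)!}{(n-l)!}\sum_{k=l+1}^{n+1}\tilde c_k\binom{n-l}{k-l-1}\tilde\chi^{k-l-1}\wedge\tilde\omega^{n+1-k},
\]
I will reindex with $k\mapsto k+1$ to get
\[
\tilde g^{(l+1)}(\tilde\chi)=(n+1)\,\frac{n!}{(n-l)!}\sum_{k=l}^{n}c_k\binom{n-l}{k-l}\tilde\chi^{k-l}\wedge\tilde\omega^{n-k}.
\]
This is $(n+1)$ times the expression defining $g^{(l)}$, but built from the ambient forms. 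Pullback under the inclusion $\iota:X_{z_0}\hookrightarrow\mathcal{X}$ commutes with wedge products, so
\[
\iota^*\tilde g^{(l+1)}(\tilde\chi)=(n+1)\,g^{(l)}(\chi').
\]
Both sides are $(n-l,n-l)$ forms; on the left this is a form of bidegree $(n+1-(l+1),\,n+1-(l+1))$ restricted to the slice.

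It then remains to check that the restriction of a strictly positive $(q,q)$ form on $\mathcal{X}$ to the complex submanifold $X_{z_0}$ is strictly positive. I would take any nonzero decomposable $\sqrt{-1}\beta_1\wedge\bar\beta_1\wedge\cdots\wedge\sqrt{-1}\beta_{n-q}\wedge\bar\beta_{n-q}$ on $T_xX_{z_0}$, extend each $\beta_j$ to $T^{1,0*}_x\mathcal{X}$ by letting it vanish on $\partial_{z_0}$, and wedge with the additional factor $\sqrt{-1}dz_0\wedge d\bar z_0$. Strict positivity on $\mathcal{X}$ then gives a positive $(n+1,n+1)$ form, and this form equals (restricted wedge)$\,\wedge\sqrt{-1}dz_0\wedge d\bar z_0$, which forces the slice $(n,n)$ form to be positive. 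Nonvanishing of the extended product follows because the $\beta_j$ together with $dz_0$ remain linearly independent in the appropriate sense.

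Applying this with $l+1$ ranging over $1,\ldots,n$ yields strict positivity of $g^{(l)}(\chi')$ for $l=0,\ldots,n-1$, and Lemma~\ref{positivity} then gives $\lambda\in\Upsilon_g$. The main obstacle is the bookkeeping in the reindexing identity, in particular the constants $\frac{(n+1)!}{(n-l)!}$ against $(n+1)\frac{n!}{(n-l)!}$, together with making sure that the range $l\ge1$ of the ambient cone condition matches exactly the range $l\ge0$ required for $\Upsilon_g$. The positivity-restriction step is routine.
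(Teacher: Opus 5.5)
Your proposal is correct and follows the paper's proof essentially step for step: use Lemma~\ref{positivity} to turn $\Upsilon^1_{\tilde g}$ into strict positivity of $\tilde g^{(l)}(\chi)$ for $l=1,\ldots,n$, apply the identity $g^{(l)}=\frac{1}{n+1}\tilde g^{(l+1)}$, restrict to the slice $X_{z_0}$, and apply Lemma~\ref{positivity} again. You also fill in two details the paper leaves implicit: that $f$ is monic and right-Noetherian, so Lemma~\ref{positivity} applies on $X_{z_0}$, and an explicit proof, by wedging with $\sqrt{-1}dz_0\wedge d\bar z_0$, that strict positivity survives restriction.
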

\begin{proof}
By Lemma \ref{positivity}, $\tilde g^{(l)}(\chi)$ is strictly positive for all $l=1,2,...,n$. So 
\[
g^{(l)}(\chi)=\frac{1}{n+1}\tilde g^{(l+1)}(\chi)
\]
is strictly positive for all $l=0,1,...,n-1$. The restriction of any strictly positive form to the subspace is still strictly positive. So $g^{(l)}(\chi|_{X_{z_{0}}})$ is strictly positive for all $l=0,1,...,n-1$. Then we see that $\lambda( (\omega|_{X_{z_{0}}})^{-1}\chi|_{X_{z_{0}}}) \in \Upsilon_{g}$ by Lemma \ref{positivity}.
\end{proof}

In the converse direction, we can prove the following extension property:
\begin{prop}
\label{extension}
Suppose that 
\[
\tilde f(x)=\sum_{k=0}^{n+1} \tilde c_k \binom{n+1}{k} x^k, \quad\text{with }\tilde c_{n+1}=1\text{ and }\tilde c_{n}=0,
\]
is a right-Noetherian polynomial and $\tilde g(\lambda)$ is the polarization of $\tilde f$. Consider 
\[
f(x)=\frac{1}{n+1}\tilde f'(x)=\sum_{k=0}^{n} c_k \binom{n}{k} x^k, \quad c_k=\tilde c_{k+1}
\]
and its polarization $g(\lambda)$.
If $(\lambda_1, ..., \lambda_n) \in \Upsilon_g$, then the following are equivalent:
\begin{enumerate}
\item $(\lambda_1, ..., \lambda_n, s) \in \Upsilon_{\tilde g}$;
\item $\tilde g (\lambda_1, ..., \lambda_n, s) > 0$;
\item $s>-\frac{\tilde g (\lambda_1, ..., \lambda_n, 0)}{g (\lambda_1, ..., \lambda_n)}$.
\end{enumerate}
Moreover,
\[
\tilde g (\lambda_1, ..., \lambda_n, 0) = \sum_{k=0}^{n} \tilde c_k \sigma_k(\lambda_1, ..., \lambda_n) \le 0, \forall (\lambda_1, ..., \lambda_n) \in \Upsilon_g.
\]
\end{prop}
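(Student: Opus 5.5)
The starting point is that $\tilde g$ is multi-affine in the last variable $s$. Write
\[
\tilde g(\lambda_1,\ldots,\lambda_n,s)=\tilde g(\lambda,0)+s\,\partial_{n+1}\tilde g(\lambda)
=\tilde g(\lambda,0)+s\,g(\lambda),
\]
where the identity $\partial_{n+1}\tilde g=\sum_k \tilde c_k\sigma_{k-1}(\lambda_1,\ldots,\lambda_n)=\sum_k c_k\sigma_k(\lambda)=g(\lambda)$ follows from $c_k=\tilde c_{k+1}$. Since $\lambda\in\Upsilon_g$, we have $g(\lambda)>0$. Hence the function $s\mapsto\tilde g(\lambda,s)$ is affine with positive slope, and (2)$\iff$(3) is immediate.

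(1)$\Rightarrow$(2) holds because $\Upsilon_{\tilde g}\subset\{\tilde g>0\}$. For (2)$\Rightarrow$(1), consider the ray $\{(\lambda,s'):s'\ge s\}$. It lies in $\{\tilde g>0\}$ because the slope is positive, so it lies in one connected component. It remains to show this component is $\Upsilon_{\tilde g}$, i.e.\ that $(\lambda,s')\in\Upsilon_{\tilde g}$ for $s'$ large. I would use the following route. Since $\lambda\in\Upsilon_g$, Lemma~\ref{positivity} shows $g^{(l)}$ is strictly positive for $l=0,\ldots,n-1$ at the diagonal matrix $\mathrm{diag}(\lambda)$. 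Now consider $\mathrm{diag}(\lambda,s')$ on $\mathbb C^{n+1}$. The forms $\tilde g^{(l)}$ for $l\ge1$ decompose as a restriction part (the $(n+1)$-th direction absent) plus $s'$ times or a wedge with the $(n+1)$-direction. Concretely, by the computation in the proof of Lemma~\ref{positivity}, positivity of $\tilde g^{(l)}$ amounts to $\partial_{J}\tilde g>0$ for all $|J|=l$. If $n+1\in J$, then $\partial_J\tilde g=\partial_{J\setminus\{n+1\}}g(\lambda)>0$ by Lemma~\ref{positivity} for $g$, using $\lambda\in\Upsilon_g$. If $n+1\notin J$, then $\partial_J\tilde g$ is affine in $s'$ with slope $\partial_J g(\lambda)>0$ (or constant $1$ when $|J|=n$), so it is positive for $s'$ large.

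Therefore all $\partial_J\tilde g(\lambda,s')>0$ for $|J|\ge0$ and $s'$ large, and Lemma~\ref{positivity}, applied to $\tilde f$ (which is right-Noetherian), gives $(\lambda,s')\in\Upsilon_{\tilde g}$. Connectedness of the ray then gives (1). The hypothesis $\tilde c_n=0$ is not needed here; it enters only in the last claim.

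For the final inequality, set $h(\lambda)=\tilde g(\lambda,0)$. Suppose $h(\lambda)>0$ for some $\lambda\in\Upsilon_g$. By (2)$\Rightarrow$(1), the point $(\lambda,0)$, and by PRT also $(\lambda,s)$ for $s\ge0$, lies in $\Upsilon_{\tilde g}$. The remaining step, which I expect to be the main obstacle, is to contradict this using $\tilde c_n=0$. By the remark after Lemma~\ref{positivity}, the condition $\tilde c_n=0$ together with $\Upsilon^1_{\tilde g}$ forces all entries to be positive. Since $\Upsilon_{\tilde g}\subset\Upsilon^1_{\tilde g}$ (Definition~\ref{FMGdefinition}(2)), every point of $\Upsilon_{\tilde g}$ has positive coordinates. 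But $(\lambda,0)\in\Upsilon_{\tilde g}$ has last coordinate $0$, which is a contradiction. Hence $h\le0$ on $\Upsilon_g$.

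The delicate points are checking the inclusion $\Upsilon_{\tilde g}\subset\Upsilon^1_{\tilde g}$ via Definition~\ref{FMGdefinition}, and that the remark gives strict positivity $\lambda_i>0$, for instance via $\partial_{J}\tilde g=\lambda_i+\tilde c_n=\lambda_i$ for $|J|=n$, $J=\{i\}^c$.
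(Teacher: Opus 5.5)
Your proof is correct. The steps (1)$\Rightarrow$(2), (2)$\Leftrightarrow$(3) and the final inequality agree with the paper's. The paper likewise uses $\tilde c_n=0$ to get $\Upsilon_{\tilde g}\subset\Upsilon^n_{\tilde g}=\Gamma^+_{n+1}$, so $(\lambda,0)\notin\Upsilon_{\tilde g}$ and hence $\tilde g(\lambda,0)\le 0$.

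You depart from the paper in the implication into (1).

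\begin{itemize}
\item The paper follows the proof of Lin's Lemma 2.4. It picks an arbitrary $\mu\in\Upsilon_{\tilde g}$ and uses $(\mu_1,\ldots,\mu_n)\in\Upsilon_g$ and $\mu_{n+1}>h(\mu_1,\ldots,\mu_n)$, where $h=-\tilde g(\cdot,0)/g$. It joins $\lambda$ to $(\mu_1,\ldots,\mu_n)$ by a path in $\Upsilon_g$ and lifts this path to a height $H$ exceeding $\mu_{n+1}$ and the maximum of $h$ along it. It then closes up with vertical segments, all staying in $\{\tilde g>0\}$.
\item You instead certify directly that $(\lambda,s')\in\Upsilon_{\tilde g}$ for $s'\gg 0$. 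You check $\partial_J\tilde g(\lambda,s')>0$ for all $|J|\le n$, splitting on whether $n+1\in J$, and then use connectedness of the vertical ray.
\end{itemize}

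The paper's argument is purely topological once one knows that $\Upsilon_{\tilde g}$ projects into $\Upsilon_g$. The paper states that fact without comment; it is most naturally justified by the same derivative characterization, or by $\Upsilon_{\tilde g}\subset\Upsilon_{\partial_{n+1}\tilde g}$. Your route dispenses with the path-and-compactness step and makes membership for large $s'$ explicit. The price is invoking the full characterization of Lemma~\ref{positivity} (Lin's Lemma 2.5) for both $g$ and the $(n+1)$-variable polynomial $\tilde g$.

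For the application to $g$, you should state explicitly that $f=\tilde f'/(n+1)$ is monic and right-Noetherian. This is immediate, since its root sequence is a tail of that of $\tilde f$.
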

\begin{proof}
It is trivial to see that $(1)\Rightarrow (2)$ because $\Upsilon_{\tilde g}$ is a connected component of $\{\tilde g>0\}$. To get $(2)\Longleftrightarrow (3)$, note that
\[\tilde g(\lambda_1, ..., \lambda_n, s)=\sum_{k=0}^{n+1} \tilde c_k \sigma_k(\lambda_1, ..., \lambda_n, s).
\]
So
\[\partial_s \tilde g(\lambda_1, ..., \lambda_n, s)=\sum_{k=1}^{n+1} \tilde c_k \sigma_{k-1}(\lambda_1, ..., \lambda_n)=g(\lambda_1, ..., \lambda_n).
\]
Using the fact that $\tilde g$ is multiaffine, we see that 
\[
\tilde g (\lambda_1, ..., \lambda_n, s)
=\tilde g (\lambda_1, ..., \lambda_n, 0) + s \partial_s \tilde g (\lambda_1, ..., \lambda_n, 0)
=\tilde g (\lambda_1, ..., \lambda_n, 0) + s g (\lambda_1, ..., \lambda_n).
\]
Note that $g (\lambda_1, ..., \lambda_n)>0$ on $\Upsilon_g$. This proves $(2)\Longleftrightarrow (3)$.
Now we use the proof of Lemma 2.4 of \cite{LinInverseSigmaConvexity} to finish $(3)\Rightarrow (1)$. For the convenience of readers, we write the proof here. The function $h(\lambda_1, ..., \lambda_n)=-\frac{\tilde g (\lambda_1, ..., \lambda_n, 0)}{g (\lambda_1, ..., \lambda_n)}$ is a continuous function on $\Upsilon_g$. Pick any $(\mu_1,...,\mu_n, \mu_{n+1})\in \Upsilon_{\tilde g}$. Then $(\mu_1,...,\mu_n)\in \Upsilon_g$, and $\mu_{n+1}>h(\mu_1,...\mu_n)$. Find a continuous path $(\lambda_1(t),...\lambda_n(t))$ inside $\Upsilon_g$ connecting $(\lambda_1, ..., \lambda_n)$ and $(\mu_1,...,\mu_n)$. Let $H$ be a number larger than $\mu_{n+1}$ and the upper bound of $h$ on this path. Then $(\lambda_1(t),...\lambda_n(t),H)$ is a path inside $\{\tilde g>0\}$. The path $(\mu_1,...\mu_n,s), s\in[\mu_{n+1},H]$ and the path $(\lambda_1,...\lambda_n,s), s>h(\lambda_1,...\lambda_n)$ are also paths in $\{\tilde g>0\}$. So all of them lie in $\Upsilon_{\tilde g}$.

Finally, if $\tilde c_{n}=c_{n-1}=0$, then $\Upsilon_{\tilde g}\subset\Upsilon^{n}_{\tilde g}=\Gamma_{n+1}^+$. However, $(\lambda_1, ..., \lambda_n, 0) \not\in \Gamma_{n+1}^+$.
This implies  
\[
\tilde g (\lambda_1, ..., \lambda_n, 0) = \sum_{k=0}^{n} \tilde c_k \sigma_k(\lambda_1, ..., \lambda_n) \le 0
\]
by the previous equivalence in this proposition.
\end{proof}

\section{The Geodesic Equation  }

For $i=1,2$, set $c_n^{(i)}=1$ and $c_{n-1}^{(2)}=0$. We define
	   \[
	   f_i(x) = \sum_{k=0}^{n} c_k^{(i)} \binom{n}{k} x^k, \quad
	   g_i(\lambda) = \sum_{k=0}^{n} c_k^{(i)} \sigma_k(\lambda),
	   \]
	   \[
		   g_i(\chi_\varphi) = \sum_{k=0}^{n} c_k^{(i)}\binom{n}{k} \chi_\varphi^k \wedge \omega^{n-k},\quad i=1,2.
		   \]
		   Assume that $f_1, f_2$ are right-Noetherian polynomials such that $f_2\prec f_1$.
  
        Let $\mathcal{H} = \{ \varphi \in C^{\infty}(X,\mathbb{R})\mid \lambda(\omega^{-1}\chi_\varphi)\in \Upsilon_{g_2} \}$. The tangent space at $\varphi\in\mathcal{H}$ is $T_{\varphi}\mathcal{H}=C^{\infty}(X,\mathbb{R})$. Consider the following Riemannian metric:
        \[ \langle \Psi_1, \Psi_2 \rangle_{\varphi} = \int_{X} \Psi_1 \Psi_2 g_2(\chi_\varphi)
        =\int_{X} \Psi_1 \Psi_2 \sum_{k=0}^{n} \binom{n}{k} c_k^{(2)} \chi_{\varphi}^{k}\wedge  \omega^{n-k}.\]
        Let $\varphi(t)$ be a smooth curve in $\mathcal{H}$ with $\varphi(0)=\varphi_{0}$ and $\varphi(1)=\varphi_{1}$. Let us also consider the family of curves $\varphi(t,s)$ in $\mathcal{H}$ such that $\varphi(0,s)=\varphi_{0},\varphi(1,s)=\varphi_{1}$ and $\varphi(t,0)=\varphi(t)$. 
        Consider the energy
        \[E(s) = \int_{0}^{1} \int_{X} \left( \frac{d\varphi}{dt} \right)^{2}\sum_{k=0}^{n} \binom{n}{k} c_k^{(2)} \chi_{\varphi}^{k}\wedge  \omega^{n-k}dt.\]
        We now differentiate with respect to $s$.
        
        \begin{align*}
            \frac{dE}{ds}\Big|_{s=0} & = \int_0^1 \int_{X} 2\frac{d\varphi}{dt}\frac{d}{dt}\left(\frac{d}{ds}\Big|_{s=0}\varphi\right)\sum_{k=0}^{n} \binom{n}{k} c_k^{(2)} \chi_{\varphi}^{k}\wedge  \omega^{n-k}dt\\
            & +\int_{0}^{1}\int_{X}\big(\frac{d\varphi}{dt}\big)^{2}\sum_{k=1}^{n}\binom{n}{k}c_k^{(2)}k\chi_{\varphi}^{k-1}\wedge\omega^{n-k}\wedge\sqrt{-1}\partial\bar{\partial}\left(\frac{d}{ds}\Big|_{s=0}\varphi\right)dt
        \end{align*}
        
        Integrating the second term by parts, we obtain
        \begin{align*}
            & = \int_0^1 \int_{X} 2\frac{d\varphi}{dt}\frac{d}{dt}\left(\frac{d}{ds}\Big|_{s=0}\varphi\right)\sum_{k=0}^{n} \binom{n}{k} c_k^{(2)} \chi_{\varphi}^{k}\wedge  \omega^{n-k}dt\\
            &+2\int_{0}^{1}\int_{X}(\frac{d}{ds}\Big|_{s=0}\varphi)\sqrt{-1}\partial(\frac{d\varphi}{dt})\wedge \bar{\partial}(\frac{d\varphi}{dt})\wedge\sum_{k=1}^{n}\binom{n}{k}c_k^{(2)}k\chi_{\varphi}^{k-1}\wedge\omega^{n-k}dt\\
            &+2\int_{0}^{1}\int_{X}(\frac{d}{ds}\Big|_{s=0}\varphi)(\frac{d\varphi}{dt}) \sqrt{-1}\partial\bar{\partial}(\frac{d\varphi}{dt})\wedge\sum_{k=1}^{n}\binom{n}{k}c_k^{(2)}k\chi_{\varphi}^{k-1}\wedge\omega^{n-k}dt
        \end{align*}
        Integrating the first term by parts in time, we obtain
        \begin{align*}
            &\int_0^1 \int_{X} 2\frac{d\varphi}{dt}\frac{d}{dt}\left(\frac{d}{ds}\Big|_{s=0}\varphi\right)\sum_{k=0}^{n} \binom{n}{k} c_k^{(2)} \chi_{\varphi}^{k}\wedge  \omega^{n-k}dt\\
            &=-\int_{0}^{1}\int_{X}2\frac{d^{2}\varphi}{dt^{2}}\left(\frac{d}{ds}\Bigg|_{s=0}\varphi\right)\sum_{k=0}^{n} \binom{n}{k} c_k^{(2)} \chi_{\varphi}^{k}\wedge  \omega^{n-k}dt\\
            &-\int_{0}^{1}\int_{X}2\frac{d\varphi}{dt}\left(\frac{d}{ds}\Bigg|_{s=0}\varphi\right)\sum_{k=1}^{n}\binom{n}{k}c_k^{(2)}k\chi_{\varphi}^{k-1}\wedge\omega^{n-k}\wedge\sqrt{-1}\partial\bar{\partial}\left(\frac{d\varphi}{dt}\right)dt.
        \end{align*}
        Therefore,
        \begin{align*}
            \frac{dE}{ds}\Bigg|_{s=0} &=2\int_{0}^{1}\int_{X}(\frac{d}{ds}\Big|_{s=0}\varphi)\sqrt{-1}\partial(\frac{d\varphi}{dt})\wedge \bar{\partial}(\frac{d\varphi}{dt})\wedge\sum_{k=1}^{n}\binom{n}{k}c_k^{(2)}k\chi_{\varphi}^{k-1}\wedge\omega^{n-k}dt\\
            &-\int_{0}^{1}\int_{X}2\frac{d^{2}\varphi}{dt^{2}}\left(\frac{d}{ds}\Bigg|_{s=0}\varphi\right)\sum_{k=0}^{n} \binom{n}{k} c_k^{(2)} \chi_{\varphi}^{k}\wedge  \omega^{n-k}dt
        \end{align*}

        Thus, we obtain the geodesic equation
        \begin{equation}
                \frac{d^2\varphi}{dt^2} \sum_{k=0}^{n}c_k^{(2)}\binom{n}{k}\chi_{\varphi}^k\wedge\omega^{n-k} 
                -\sqrt{-1}\partial\frac{d\varphi}{dt}\wedge  \bar{\partial}(\frac{d\varphi}{dt})\wedge 
                \sum_{k=1}^{n}kc_k^{(2)}\binom{n}{k}\chi_{\varphi}^{k-1}\wedge\omega^{n-k} =0.
            \label{GeodesicEq2}        
        \end{equation}
    
        \subsection{Geodesic equation in one dimension higher}
            
            We now write this geodesic equation in $n+1$ dimensions. For this purpose, consider $\mathcal{X}=X\times\mathcal{A}$, where
            \[
            \mathcal{A}=\{z_0\in\mathbb{C}\mid 1<\lvert z_0\rvert<e\},
            \qquad t=\log\lvert z_0\rvert\in(0,1).
            \]
            \begin{lem}
                For $\varphi_{0},\varphi_{1}\in \mathcal{H}$, let $\varphi:\mathcal{X}\rightarrow\mathbb{R}$ be $S^{1}$-invariant, that is, $\varphi(x,z_0)=\varphi(x,\lvert z_0\rvert)$, and set
                \[
                \varphi_t(x)=\varphi(x,z_0),\qquad t=\log\lvert z_0\rvert.
                \]
                Then $\varphi_t$ solves the geodesic equation with $\varphi_t=\varphi_0$ when $t=0$ and $\varphi_t=\varphi_1$ when $t=1$ if and only if
                \[\sum_{k=1}^{n+1}\binom{n+1}{k}\tilde{c}_{k}(\pi_{X}^{*}\chi+\sqrt{-1}D\bar{D}\varphi)^{k}\wedge(\pi_{X}^{*}\omega)^{n+1-k}=0, \quad \tilde c_k=c_{k-1}^{(2)}.\]
                Equivalently, the boundary conditions are
                \[
                \varphi|_{\{\lvert z_0\rvert=1\}}=\varphi_0,
                \qquad
                \varphi|_{\{\lvert z_0\rvert=e\}}=\varphi_1.
                \]
                Here $\sqrt{-1}D\bar{D}$ is the $\sqrt{-1}\partial\bar{\partial}$ operator on the $(n+1)$-dimensional manifold $\mathcal{X}$.
            \end{lem}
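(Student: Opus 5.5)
We will prove the lemma by a direct computation: expand $\chi_\Phi:=\pi_X^*\chi+\sqrt{-1}D\bar D\varphi$ on $\mathcal{X}$ into spatial, mixed and $z_0$-parts, and then read off the top-degree component. Write $w=\log z_0$ locally, so that $t=\operatorname{Re} w$. For an $S^1$-invariant $\varphi$ we have $\partial_w\varphi=\frac12\dot\varphi$ and $\partial_w\partial_{\bar w}\varphi=\frac14\ddot\varphi$, with $\dot\varphi=\frac{d\varphi}{dt}$. Hence
\[
\chi_\Phi=\chi_{\varphi_t}+\tfrac12\sqrt{-1}\big(\partial\dot\varphi\wedge d\bar w+dw\wedge\bar\partial\dot\varphi\big)+\tfrac14\ddot\varphi\,\sqrt{-1}dw\wedge d\bar w,
\]
where $\partial,\bar\partial$ denote the operators along $X$ and $\chi_{\varphi_t}=\chi+\sqrt{-1}\partial\bar\partial\varphi_t$ is pulled back from $X$. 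Write $\chi_\Phi=A+B+C$ for these three pieces. Since $\pi_X^*\omega$ has no $dw$-component, every nonzero $(n+1,n+1)$-form in the sum must contain exactly one $dw$ and one $d\bar w$. Such a form therefore arises only from terms with one factor of $C$, or with two factors of $B$ (one supplying $dw$ and one supplying $d\bar w$).

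The binomial expansion then gives, for each $k$,
\[
\chi_\Phi^k\wedge\omega^{n+1-k}=k\,C\wedge\chi_{\varphi}^{k-1}\wedge\omega^{n+1-k}+\binom{k}{2}B^2\wedge\chi_{\varphi}^{k-2}\wedge\omega^{n+1-k}.
\]
Here we must compute $B^2$ carefully, and we expect to obtain $B^2=-\frac12\sqrt{-1}\partial\dot\varphi\wedge\bar\partial\dot\varphi\wedge\sqrt{-1}dw\wedge d\bar w$. The main obstacle is getting these signs and constants right, since the $(1,1)$-forms involved are even and commute, while $\partial\dot\varphi$ and $d\bar w$ anticommute. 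Next, substitute $\tilde c_k=c^{(2)}_{k-1}$ and use $\binom{n+1}{k}k=(n+1)\binom{n}{k-1}$ and $\binom{n+1}{k}\binom{k}{2}=\frac{(n+1)n}{2}\binom{n-1}{k-2}$. Reindexing $k\to k+1$ in the first family and $k\to k+2$ in the second, the total becomes
\[
\tfrac{n+1}{4}\Big[\ddot\varphi\,g_2(\chi_\varphi)-\sqrt{-1}\partial\dot\varphi\wedge\bar\partial\dot\varphi\wedge\sum_{k\ge1}k\,c^{(2)}_{k}\binom{n}{k}\chi_\varphi^{k-1}\wedge\omega^{n-k}\Big]\wedge\sqrt{-1}dw\wedge d\bar w .
\]
For this we use $n\binom{n-1}{k-1}=k\binom{n}{k}$, together with the fact that $\binom{n+1}{k}\binom k2$ matches $\frac{n+1}{2}k\binom nk$ after the index shift. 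The term $k=0$ (with $\tilde c_0$) is absent from the sum, consistent with the statement, because $\omega^{n+1}=0$ on $\mathcal{X}$ for the pulled-back form.

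The bracket is precisely the left-hand side of \eqref{GeodesicEq2}, viewed as an $(n,n)$-form on $X_{z_0}$. Since $\sqrt{-1}dw\wedge d\bar w$ is nonvanishing, the $(n+1)$-dimensional equation holds at a point if and only if \eqref{GeodesicEq2} holds for $\varphi_t$ at the corresponding $(x,t)$. The boundary circles $|z_0|=1$ and $|z_0|=e$ correspond to $t=0$ and $t=1$, which gives the stated equivalence of boundary conditions. Finally, we double-check the relative sign between the $\ddot\varphi$ and gradient terms by testing the case $n=1$ with $g_2(x)=x$. In that case the equation should reduce to the homogeneous Monge--Amp\`ere equation $\ddot\varphi\,\chi_\varphi-\sqrt{-1}\partial\dot\varphi\wedge\bar\partial\dot\varphi=0$ (Chen's equation), and this fixes the sign of $B^2$ independently of the computation.
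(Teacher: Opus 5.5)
Your proposal is correct and follows the paper's proof. The paper likewise writes $\pi_X^*\chi+\sqrt{-1}D\bar D\varphi=\chi_{\varphi_t}+\alpha$, where $\alpha$ is your $B+C$, expressed in $z_0$ rather than $w=\log z_0$; the change of coordinate only moves the factor $|z_0|^{-2}$ into $\sqrt{-1}dw\wedge d\bar w$. It then keeps only the top-degree terms linear in the $dz_0\wedge d\bar z_0$ part or quadratic in the mixed part, and uses $\binom{n+1}{k}k=(n+1)\binom{n}{k-1}$ to obtain $(n+1)$ times the geodesic equation wedged with the $z_0$-area form. Your predicted $B^2=-\tfrac12\sqrt{-1}\partial\dot\varphi\wedge\bar\partial\dot\varphi\wedge\sqrt{-1}dw\wedge d\bar w$ is exactly what the direct computation gives, so the $n=1$ test is only a consistency check, not a needed ingredient.
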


            \begin{proof}
                Since $t=\log\lvert z_0\rvert$, we have the identities
                \[\partial_{z_0}\varphi=\frac{1}{2z_0}\frac{d\varphi_t}{dt},\qquad \partial_{\bar z_0}\varphi=\frac{1}{2\bar z_0}\frac{d\varphi_t}{dt},\qquad \partial_{z_0}\partial_{\bar z_0}\varphi=\frac{1}{4\lvert z_0\rvert^{2}}\frac{d^{2}\varphi_t}{dt^{2}}.\]
                Observe that $\pi_{X}^{*}\chi+\sqrt{-1}D\bar{D}\varphi=\chi_{\varphi_t}+\alpha$ where 
                \[
\alpha=(\sqrt{-1}\partial_{z_0}\partial_{\bar z_0}\varphi)dz_0\wedge d\bar z_0+\sqrt{-1}\partial_{X}\partial_{\bar z_0}\varphi\wedge d\bar z_0+\sqrt{-1}dz_0\wedge \partial_{z_0}\bar{\partial}_{X}\varphi.
                \]
                We also have $\alpha^{j}=0$ for $j\ge 3$, $\chi_{\varphi_t}^k\wedge (\pi_{X}^{*}\omega)^{n+1-k}=0$, and 
                \[
                \alpha^{2}=-(2\sqrt{-1}\partial_{X}\partial_{\bar z_0}\varphi\wedge \sqrt{-1}\partial_{z_0}\bar{\partial}_{X}\varphi)\wedge dz_0\wedge d\bar z_0.
                \]
                We now compute
                \begin{align*}
                    &\left(\pi_{X}^{*}\chi+\sqrt{-1}D\bar{D}\varphi\right)^{k}\wedge (\pi_{X}^{*}\omega)^{n+1-k}\\
                    &=(\chi_{\varphi_t}+\alpha)^{k}\wedge (\pi_{X}^{*}\omega)^{n+1-k}\\
                    &=\left(k\chi_{\varphi_t}^{k-1}\wedge \alpha+\frac{k(k-1)}{2}\chi_{\varphi_t}^{k-2}\wedge \alpha^{2}\right)\wedge (\pi_{X}^{*}\omega)^{n+1-k}\\
                    &=\left( k\chi_{\varphi_t}^{k-1} \partial_{z_0}\partial_{\bar z_0}\varphi-k(k-1)\chi_{\varphi_t}^{k-2}\wedge \sqrt{-1}\partial_{X}\partial_{\bar z_0}\varphi\wedge \partial_{z_0}\bar{\partial}_{X}\varphi\right)\wedge \sqrt{-1} dz_0\wedge d\bar z_0\wedge (\pi_{X}^{*}\omega)^{n+1-k} \\
                    &=k\frac{\sqrt{-1}dz_0\wedge d\bar z_0}{4\lvert z_0\rvert^{2}}\left(\frac{d^{2}\varphi_t}{dt^{2}}\chi_{\varphi_t}^{k-1}-(k-1)\chi_{\varphi_t}^{k-2}\wedge \sqrt{-1}\partial_{X}(\frac{d\varphi_t}{dt})\wedge \bar{\partial}_{X}(\frac{d\varphi_t}{dt})\right) \wedge (\pi_{X}^{*}\omega)^{n+1-k}
                \end{align*}
                Combining these identities with the combinatorial equation
                \[\binom{n+1}{k}k=(n+1)\binom{n}{k-1},\] we obtain
                \begin{equation}
                \begin{split}
                \label{higher-dimensional-geodesic}
                    &\frac{1}{n+1}\sum_{k=1}^{n+1}\binom{n+1}{k}\tilde{c}_{k}(\pi_{X}^{*}\chi+\sqrt{-1}D\bar{D}\varphi)^{k}\wedge(\pi_{X}^{*}\omega)^{n+1-k}\\
                    &=\frac{\sqrt{-1}dz_0\wedge d\bar z_0}{4\lvert z_0\rvert^{2}}\frac{d^{2}\varphi_t}{dt^{2}}\left(\sum_{k=1}^{n+1}\binom{n}{k-1}\tilde{c}_{k}\chi_{\varphi_t}^{k-1}\wedge(\pi_{X}^{*}\omega)^{n+1-k}\right)\\
                    &-\frac{\sqrt{-1}dz_0\wedge d\bar z_0}{4\lvert z_0\rvert^{2}}\sqrt{-1}\partial_{X}(\frac{d\varphi_t}{dt})\wedge \bar{\partial}_{X}(\frac{d\varphi_t}{dt})\left(\sum_{k=2}^{n+1}\binom{n}{k-1}\tilde{c}_{k}(k-1)\chi_{\varphi_t}^{k-2}\wedge (\pi_{X}^{*}\omega)^{n+1-k}\right).
                \end{split}
                \end{equation}
                If $\tilde c_k=c_{k-1}^{(2)}$ for $1\le k\le n+1$, then we obtain
                $\frac{\sqrt{-1}dz_0\wedge d\bar z_0}{4\lvert z_0\rvert^{2}}\times \text{(geodesic equation)}$.
            \end{proof}

             \subsection{The \texorpdfstring{$\epsilon$}{Epsilon}-geodesic equation}

            We can't expect a smooth geodesic by the counterexample of Darvas and Lempert \cite{DarvasLempertWeakGeodesics}. So in this subsection, we study the $(n+1)$-dimensional $\epsilon$-geodesic equation
            \[
            \sum_{k=0}^{n+1} \tilde{c}_k \binom{n+1}{k} (\pi^*_{X}\chi + \sqrt{-1}D\bar{D}\varphi )^{k}\wedge (\pi_{X}^*\omega + \epsilon^{2}\sqrt{-1}dz_0\wedge  d\bar z_0)^{n+1-k} =0,
            \]
            with the assumption that $\pi_{X}^{*}\chi+\sqrt{-1}D\bar{D}\varphi \in \Upsilon^1_{\tilde g}$. By Corollary \ref{restriction}, any such solution is a path inside $\mathcal{H}$.
            By Lemma~\ref{IntegralUpsilonStable}, 
            the constant $\tilde{c}_0$ will be chosen sufficiently negative so that the polynomial associated to the equation is strictly right-Noetherian.

            	\begin{lem}
    \label{geodesic lemma}
            The $\epsilon$-geodesic $\varphi_{\epsilon}$ satisfies
		\begin{align*}
			&\frac{d^{2}\varphi_{\epsilon}}{dt^{2}}g_{2}(\chi_{\varphi_{\epsilon}})-\sqrt{-1}\partial(\frac{d\varphi_{\epsilon}}{dt})\wedge\bar{\partial}(\frac{d\varphi_{\epsilon}}{dt})\wedge g_{2}'(\chi_{\varphi_{\epsilon}})\\
			&=-4\epsilon^2e^{2t}\left(\sum_{k=0}^{n}\tilde{c}_{k}\binom{n}{k}\big(\pi_{X}^{*}\chi+\sqrt{-1}\partial\bar{\partial}\varphi_{\epsilon}\big)^{k}\wedge(\pi_{X}^{*}\omega)^{n-k}\right) \ge 0.
		\end{align*}
	\end{lem}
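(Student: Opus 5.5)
Expand the $\epsilon$-geodesic equation in powers of $\epsilon^2\sqrt{-1}dz_0\wedge d\bar z_0$, reuse the computation \eqref{higher-dimensional-geodesic} from the previous lemma for the $\epsilon$-free part, and then obtain the sign from Proposition \ref{extension}.

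\textbf{The $\epsilon$-free part.} Write $\Theta=\pi_X^*\chi+\sqrt{-1}D\bar D\varphi_\epsilon$ and $\eta=\epsilon^2\sqrt{-1}dz_0\wedge d\bar z_0$. Since $\eta^2=0$,
\[
(\pi_X^*\omega+\eta)^{n+1-k}=(\pi_X^*\omega)^{n+1-k}+(n+1-k)(\pi_X^*\omega)^{n-k}\wedge\eta .
\]
The terms without $\eta$ give exactly the left-hand side treated in \eqref{higher-dimensional-geodesic}, except that the $k=0$ term is now included. That term is $(\pi_X^*\omega)^{n+1}=0$, because $\pi_X^*\omega$ has rank $n$ on the $(n+1)$-dimensional $\mathcal{X}$. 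With $\tilde c_k=c^{(2)}_{k-1}$, the previous lemma therefore gives
\[
(n+1)\frac{\sqrt{-1}dz_0\wedge d\bar z_0}{4|z_0|^2}\Big(\ddot\varphi\, g_2(\chi_{\varphi_t})-\sqrt{-1}\partial\dot\varphi\wedge\bar\partial\dot\varphi\wedge g_2'(\chi_{\varphi_t})\Big).
\]
This uses $\binom{n}{k-1}(k-1)=n\binom{n-1}{k-2}$ to recognise the formal derivative $g_2'$ as defined before Lemma \ref{positivity}.

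\textbf{The $\eta$ part.} Since $\eta$ already contains $dz_0\wedge d\bar z_0$, only the $X$-directional part $\chi_{\varphi_t}$ of $\Theta$ survives in the product. Using $(n+1-k)\binom{n+1}{k}=(n+1)\binom{n}{k}$, these terms equal
\[
(n+1)\,\epsilon^2\sqrt{-1}dz_0\wedge d\bar z_0\wedge\sum_{k=0}^n\tilde c_k\binom nk\chi_{\varphi_t}^k\wedge\omega^{n-k}.
\]

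\textbf{Solving the equation.} Setting the total equal to zero, dividing by $(n+1)\sqrt{-1}dz_0\wedge d\bar z_0/(4|z_0|^2)$, and using $|z_0|^2=e^{2t}$ yields the stated identity. Here the bracket on the right is read as the $(n,n)$-form on the slice $X_{z_0}$. A second-order check of the normalising constants is the main routine point.

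\textbf{The sign.} This is the substantive step. Assume $\Theta\in\Upsilon^1_{\tilde g}$, as in the setting of the $\epsilon$-geodesic equation. By Corollary \ref{restriction}, at each point the slice eigenvalues $\lambda=\lambda(\omega^{-1}\chi_{\varphi_t})$ lie in $\Upsilon_g$, where $g$ is the polarization of $\frac{1}{n+1}\tilde f'$, i.e.\ $g=g_2$.

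Apply Proposition \ref{extension} with $\tilde c_{n+1}=c^{(2)}_n=1$ and $\tilde c_n=c^{(2)}_{n-1}=0$. It gives
\[
\sum_{k=0}^n\tilde c_k\sigma_k(\lambda)=\tilde g(\lambda,0)\le0 .
\]
In an $\omega$-unitary frame diagonalising $\chi_{\varphi_t}$, the bracket equals $n!\,\tilde g(\lambda,0)$ times the volume form. Hence the right-hand side, being $-4\epsilon^2e^{2t}$ times this, is $\ge 0$.

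One point needs care. Proposition \ref{extension} assumes $\tilde f$ is right-Noetherian. This holds by the choice of $\tilde c_0$ via Lemma \ref{IntegralUpsilonStable}, because $\tilde f'=(n+1)f_2$ and $f_2$ is right-Noetherian. Moreover, the inequality $\tilde g(\lambda,0)\le 0$ used only $\Upsilon_{\tilde g}\subset\Gamma^+_{n+1}$, which follows from $\tilde c_n=0$.
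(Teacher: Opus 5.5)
Your proposal is correct and follows the paper's proof essentially step for step: split off the $\epsilon^2\sqrt{-1}dz_0\wedge d\bar z_0$ term, discard the $k=0$ term via $(\pi_X^*\omega)^{n+1}=0$, identify the $\epsilon$-free part through \eqref{higher-dimensional-geodesic}, simplify the remaining terms with $(n+1-k)\binom{n+1}{k}=(n+1)\binom{n}{k}$, and take the sign from Proposition~\ref{extension}. Your explicit appeal to Corollary~\ref{restriction}, which places the slice eigenvalues in $\Upsilon_{g_2}$, just spells out what the paper means by ``non-negative on $\mathcal{H}$''. One small quibble that does not affect your argument, since you cite the proposition directly: the bound $\tilde g(\lambda,0)\le 0$ in Proposition~\ref{extension} also uses the implication (3)$\Rightarrow$(1) proved there, not only the inclusion $\Upsilon_{\tilde g}\subset\Gamma^+_{n+1}$.
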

	\begin{proof}
		 Separating the $\epsilon^{2}$ term, and using $(\pi_{X}^{*}\omega)^{n+1}=0$, we obtain
\begin{align*}
&\sum_{k=1}^{n+1}\tilde{c}_{k}\binom{n+1}{k}(\pi_{X}^{*}\chi+\sqrt{-1}D\bar{D}\varphi_\epsilon)^{k}\wedge(\pi_{X}^{*}\omega)^{n+1-k}\\
	&=-\epsilon^{2}\sqrt{-1}dz_0\wedge d\bar z_0\wedge\left(\sum_{k=0}^{n}\tilde{c}_{k}(n+1-k)\binom{n+1}{k}\big(\pi_{X}^{*}\chi+\sqrt{-1}\partial\bar{\partial}\varphi_{\epsilon}\big)^{k}\wedge(\pi_{X}^{*}\omega)^{n-k}\right).
\end{align*}
By \eqref{higher-dimensional-geodesic}, we know that
\begin{align*}
	&\sum_{k=1}^{n+1}\tilde{c}_{k}\binom{n+1}{k}(\pi_{X}^{*}\chi+\sqrt{-1}D\bar{D}\varphi_\epsilon)^{k}\wedge(\pi_{X}^{*}\omega)^{n+1-k}\\
		&=(n+1)\frac{\sqrt{-1}dz_0\wedge d\bar z_0}{4\lvert z_0\rvert^{2}}\wedge \left(\frac{d^{2}\varphi_\epsilon}{dt^{2}}g_{2}(\chi_{\varphi_\epsilon})-\sqrt{-1}\partial(\frac{d\varphi_\epsilon}{dt})\wedge\bar{\partial}(\frac{d\varphi_\epsilon}{dt})\wedge g_{2}'(\chi_{\varphi_\epsilon})\right).
\end{align*}
Hence,
\begin{align*}
	&\frac{d^{2}\varphi_{\epsilon}}{dt^{2}}g_{2}(\chi_{\varphi_{\epsilon}})-\sqrt{-1}\partial(\frac{d\varphi_{\epsilon}}{dt})\wedge\bar{\partial}(\frac{d\varphi_{\epsilon}}{dt})\wedge g_{2}'(\chi_{\varphi_{\epsilon}})\\
	&=-\frac{4\epsilon^2e^{2t}}{n+1}\left(\sum_{k=0}^{n}\tilde{c}_{k}(n+1-k)\binom{n+1}{k}\big(\pi_{X}^{*}\chi+\sqrt{-1}\partial\bar{\partial}\varphi_{\epsilon}\big)^{k}\wedge(\pi_{X}^{*}\omega)^{n-k}\right)\\
    &=-4\epsilon^2e^{2t}\left(\sum_{k=0}^{n}\tilde{c}_{k}\binom{n}{k}\big(\pi_{X}^{*}\chi+\sqrt{-1}\partial\bar{\partial}\varphi_{\epsilon}\big)^{k}\wedge(\pi_{X}^{*}\omega)^{n-k}\right).
\end{align*}
Finally, this term is non-negative on $\mathcal{H}$ by Proposition \ref{extension}.
	\end{proof}

\section{Path and Zeroth Order Estimate}
\label{SecContinuityPath}

The $\epsilon$-geodesic equation is the following equation
\begin{equation}
\label{modified equation}
    \begin{cases}
        \sum_{k=0}^{n+1} \tilde{c}_k \binom{n+1}{k} (\pi^*_{X}\chi + \sqrt{-1}D\bar{D}\varphi )^{k}\wedge (\pi_{X}^*\omega + \epsilon^{2}\sqrt{-1}dz_0\wedge  d\bar z_0)^{n+1-k} =0, \\
        \pi_{X}^{*}\chi+\sqrt{-1}D\bar{D}\varphi \in \Upsilon^1_{\tilde g}, \quad \varphi|_{\{\lvert z_0\rvert=1\}}=\varphi_0, \quad \varphi|_{\{\lvert z_0\rvert=e\}}=\varphi_1.
    \end{cases}
\end{equation}
Recall that we always assume  $c_{n}^{(2)}=1$, and without loss of generality assume that $c_{n-1}^{(2)}=0$. Then Lemma \ref{positivity} implies that $\pi_{X}^{*}\chi+\sqrt{-1}D\bar{D}\varphi$ is K\"ahler.

This equation\eqref{modified equation} is considered on $\mathcal{X}:=X\times \mathcal{A}$ with the prescribed boundary conditions. When $\epsilon=0$, it becomes the geodesic equation. For $\epsilon>0$, we choose $\tilde{c}_0$ negative enough so that the associated polynomial is strictly right-Noetherian.

Let
\[
f_2(x) = \sum_{k=0}^{n}c_{k}^{(2)}\binom{n}{k}x^{k}, \quad g_{2}(\lambda) = \Pol_{n} (f_2) = \sum_{k=0}^{n}c_{k}^{(2)}\sigma_k(\lambda)
\]
\[
\tilde{f}_2(x) = \sum_{k=0}^{n+1}\tilde{c}_k\binom{n+1}{k}x^{k}, \quad \tilde{g}_{2}(\lambda) = \Pol_{n+1} (\tilde{f}_2) = \sum_{k=0}^{n+1}\tilde{c}_k\sigma_k(\lambda)
\]
and let $r_i = r(\tilde{f}_2^{(i)}),0\le i \le n,$ denote the largest real root of $\tilde{f}_2^{(i)}.$

Now we consider the following continuity path for \eqref{modified equation}:
\begin{equation}
\label{ContinuityPath}
    \begin{cases}
        \sum_{k=0}^{n+1} \tilde{c}_k(s) \binom{n+1}{k} (\pi^*_{X}\chi + \sqrt{-1}D\bar{D}\varphi )^{k}\wedge (\pi_{X}^*\omega + \epsilon^{2}\sqrt{-1}dz_0\wedge  d\bar z_0)^{n+1-k} =0, \\
        \pi_{X}^{*}\chi+\sqrt{-1}D\bar{D}\varphi \in \Upsilon^1_{\tilde g_{2,s}}, \quad \varphi|_{\{\lvert z_0\rvert=1\}}=\varphi_0, \quad \varphi|_{\{\lvert z_0\rvert=e\}}=\varphi_1.
    \end{cases}
\end{equation}
where 
\[
\tilde{c}_{k}(s) = s^{n+1-k}\tilde{c}_{k}, 1\le k\le n+1,  \quad \tilde{c}_{0}(s) = - \sum_{k=1}^{n+1}\tilde{c}_k(s)\binom{n+1}{k}r_0^k. 
\]
Now we denote:
\[
f_{2,s}(x) = \sum_{k=0}^{n} \tilde{c}_{k+1}(s) \binom{n}{k} x^k, \quad
g_{2,s}(\lambda) = \Pol_{n} (f_{2,s}) = \sum_{k=0}^{n} \tilde{c}_{k+1}(s) \sigma_k(\lambda),
\]
\[
\tilde{f}_{2,s}(x) = \sum_{k=0}^{n+1}\tilde{c}_k(s)\binom{n+1}{k}x^{k}, \quad \tilde{g}_{2,s}(\lambda) = \Pol_{n+1} (\tilde{f}_{2,s}) = \sum_{k=0}^{n+1} \tilde{c}_{k}(s) \sigma_k(\lambda),
\]
and $r_i(s) =  r(\tilde{f}_{2,s}^{(i)}),0\le i \le n$.
Remark that when $s=0$, $\tilde f_{2,s}=x^{n+1}-r_0^{n+1}$ is the classical $\epsilon$-geodesic studied by \cite{ChenKahlerMetrics}, and $f_{2,s}=x^n$. When $s=1$, $\tilde f_{2,s}=\tilde f_2$ and $f_{2,s}=f_2$.

\begin{lem}
    \label{PathRightNoeth}
    $\tilde{f}_{2,s}(x)$ is strictly right Noetherian. Moreover $\tilde{f}_{2,s}\lessdot \tilde{f}_2$.
\end{lem}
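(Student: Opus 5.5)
The plan is to use the scaling structure of the path: for $s>0$, the polynomial $\tilde f_{2,s}$ is a rescaled copy of $\tilde f_2$, up to a change of constant term. Set $F(x)=\tilde f_2(x)-\tilde c_0=\sum_{k=1}^{n+1}\tilde c_k\binom{n+1}{k}x^k$. From $\tilde c_k(s)=s^{n+1-k}\tilde c_k$ for $k\ge1$ and the definition of $\tilde c_0(s)$, we get for $s\in(0,1]$
\[
\tilde f_{2,s}(x)=s^{n+1}\bigl(F(x/s)-F(r_0/s)\bigr),
\qquad
\tilde f_{2,s}^{(i)}(x)=s^{n+1-i}\,\tilde f_2^{(i)}(x/s)\quad (1\le i\le n).
\]
It follows that the largest real roots exist and satisfy $r_i(s)=s\,r_i$ for $1\le i\le n$. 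By construction $\tilde f_{2,s}(r_0)=0$. The case $s=0$, where $\tilde f_{2,0}=x^{n+1}-r_0^{n+1}$, is handled directly at the end.

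The next step is to locate the roots of $\tilde f_2$. By Lemma~\ref{IntegralUpsilonStable}, $\tilde f_2$ is strictly right-Noetherian, so $r_0>r_1\ge r_2\ge\cdots\ge r_n$. Since $\tilde c_{n+1}=1$ and $\tilde c_n=c^{(2)}_{n-1}=0$, we have $\tilde f_2^{(n)}(x)=(n+1)!\,(x+\tilde c_n)=(n+1)!\,x$, so $r_n=0$. Hence every $r_i\ge0$, and $r_0>r_1\ge 0$; in particular $r_0>0$.

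Now I show that $r_0(s)=r_0$ for every $s\in[0,1]$. For $s\in(0,1]$, the formula above gives $\tilde f_{2,s}'(x)=s^n\tilde f_2'(x/s)$. This is positive for $x>s r_1$, because $\tilde f_2'$ has no real root beyond $r_1$ and has positive leading coefficient. Since $s r_1\le r_1<r_0$, the function $\tilde f_{2,s}$ is strictly increasing on $(s r_1,\infty)$ and vanishes at $r_0$. Therefore $r_0$ is its largest real root, and $r_0(s)=r_0>s r_1=r_1(s)$. For $i\ge1$, the chain $r_1(s)\ge\cdots\ge r_n(s)$ is just $s$ times the chain for $\tilde f_2$. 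This gives strict right-Noetherianity for $s>0$. For $s=0$, the derivatives of $x^{n+1}-r_0^{n+1}$ of order $i\ge1$ are multiples of $x^{n+1-i}$, so $r_i(0)=0$ for $i\ge 1$. Since $r_0>0$, the largest real root of $x^{n+1}-r_0^{n+1}$ is $r_0$, whatever the parity of $n+1$. So again $r_0(0)=r_0>0=r_1(0)\ge\cdots$.

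Finally, the dominance $\tilde f_{2,s}\lessdot\tilde f_2$ in the sense of Definition~\ref{DefUpsilonDominance} is immediate. We have $r_0(s)=r_0$, and for $i\ge1$, $r_i(s)=s r_i\le r_i$ because $r_i\ge0$ and $s\le1$. The only genuinely delicate point is the nonnegativity $r_i\ge0$, and in particular $r_0>0$. It is needed both for $s r_1<r_0$ and for the $s=0$ case, and it is exactly where the normalisation $c^{(2)}_{n-1}=0$, i.e. $\tilde c_n=0$, is used.
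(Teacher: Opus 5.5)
Your proof is correct and follows essentially the same route as the paper. Both arguments use the scaling identity $\tilde f_{2,s}^{(i)}(x)=s^{n+1-i}\tilde f_2^{(i)}(x/s)$ to get $r_i(s)=s r_i$, the choice of $\tilde c_0(s)$ forcing $\tilde f_{2,s}(r_0)=0$, and a monotonicity (mean value) argument beyond $s r_1$ to conclude $r_0(s)=r_0$. You also make explicit two points the paper leaves implicit: that $r_i\ge r_n=0$, coming from $\tilde c_n=0$, is what gives $s r_i\le r_i$ and $s r_1<r_0$, and that the case $s=0$ needs separate treatment.
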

\begin{proof}
By assumption, $\tilde{f}_{2} = \tilde{f}_{2,1}$ is strictly right Noetherian.
So $r_0 > r_1 \geq r_2 \geq \cdots\geq r_n=0$.
A simple calculation shows that
\[
\tilde{f}_{2,s}^{(i)}(x) = s^{n+1-i}\tilde{f}_{2}^{(i)}(\frac{x}{s}), 1\le i \le n
\]
for all $s\not=0$.
So $r_{i}(s) =  sr_{i}$ for $i\geq 1$. Note that $r_{i}(s) = 0 = sr_{i}$ is also true for $s=0$.
In fact, $\tilde{c}_0(s)$ is chosen such that
\[
\tilde{f}_{2,s}(r_0) = 0.
\]
Since  $r_0 > r_1 \geq sr_1 = r_1(s)$, $r_0$ is  the largest real root of $\tilde{f}_{2,s}$ by Lagrange's mean value theorem, which means that $r_0(s) = r_0.$
Then we have 
\[
r_0(s) > r_1(s) \geq \cdots \geq r_n(s), \quad r_i \geq r_i(s).
\]
By Definition \ref{right-Noetherian} and Definition \ref{DefUpsilonDominance}, $\tilde{f}_{2,s}(x)$ is strictly right Noetherian and $\tilde{f}_{2,s}\lessdot \tilde{f}_2$.
\end{proof}

By Theorem \ref{ThmUpsilonDominance}, we have $\Upsilon_{\tilde{g}_{2}} \subset \Upsilon_{\tilde{g}_{2,s}}$. Moreover, we have $\Upsilon_{\tilde{g}_{2}}^{i} \subset \Upsilon_{\tilde{g}_{2,s}}^{i}$.
Now since $\varphi_0,\varphi_1 \in \mathcal{H}$, we have
$\varphi_0,\varphi_1 \in \mathcal{H}_s:=\{ \varphi \in C^{\infty}(X,\mathbb{R})\mid \lambda(\omega^{-1}\chi_\varphi)\in \Upsilon_{g_{2,s}} \}.$

Now we consider the set
\[
P := \{ s\in [0,1] : \text{\eqref{ContinuityPath} has a smooth solution for s}\}.
\]
When $s=0$, the existence of an $\epsilon$-geodesic of \eqref{ContinuityPath} is proved by Xiuxiong Chen in \cite{ChenKahlerMetrics}. So $P$ is non empty.
$P$ is open by the implicit function theorem since \eqref{ContinuityPath} is elliptic and the linearised operator is self-adjoint, with trivial kernel by the maximum principle. Then we only need to show the closedness of $P$.

\subsection{Zeroth order estimate}
    \label{C0section}
       
        First, we state the following lemma, which gives the $C^{0}$ estimate. 
        \begin{lem}
        \label{supsubsolutions}
            There exist an $S^1$-invariant function $\overline{\varphi}$ independent of $\epsilon$ and $s$, and an $S^1$-invariant function $\underline{\varphi}$ independent of $s$, such that if $ \varphi_{\epsilon, s} : \mathcal{X} \to \mathbb{R} $ solves \eqref{ContinuityPath} for any $s\in[0,1]$, then
            \begin{itemize}
                \item $ \underline{\varphi} \leq \varphi_{\epsilon, s} \leq \overline{\varphi}$ ;
                \item $ \underline{\varphi}, \overline{\varphi} $ satisfy the boundary conditions;
                \item $|\frac{d}{dt}\overline{\varphi}| \leq \| \varphi_0 - \varphi_1 \|_{L^{\infty}(X)}$
                and $ |\frac{d}{dt}\underline{\varphi}|\le C(\| \varphi_0 - \varphi_1 \|_{L^{\infty}(X)}+\epsilon^2)$ on $\partial\mathcal{X}$ for a constant $C$ depending only on the upper bound of $\lambda(\omega^{-1}\chi_{\varphi_i})$, $i=0,1$, and the positive lower bound of $g_2(\lambda(\omega^{-1}\chi_{\varphi_i}))$, $i=0,1$.
                \item $\pi_{X}^{*}\chi+\sqrt{-1}D\bar{D}\underline{\varphi} \in \Upsilon_{\tilde g_2}(\omega_\epsilon)\subset \Upsilon^1_{\tilde g_2}(\omega_\epsilon)$.
            \end{itemize}
        \end{lem}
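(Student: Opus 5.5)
The plan is to take the linear interpolation in $t$ as the supersolution, obtained from convexity of solutions in $t$, and a convex quadratic correction of it as the subsolution; both inequalities will come from a comparison principle for the cone $\Upsilon_{\tilde g_{2,s}}$. Throughout set $\omega_\epsilon=\pi_X^*\omega+\epsilon^2\sqrt{-1}dz_0\wedge d\bar z_0$ and write $\psi_t=(1-t)\varphi_0+t\varphi_1$ for $t=\log|z_0|$.

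\textbf{Supersolution.} I take $\overline{\varphi}=\psi_t$, viewed as an $S^1$-invariant function on $\overline{\mathcal X}$. It satisfies the boundary conditions, does not depend on $\epsilon$ or $s$, and $|\frac{d}{dt}\overline\varphi|=|\varphi_1-\varphi_0|\le\|\varphi_0-\varphi_1\|_{L^\infty}$.
\begin{enumerate}
\item Let $\varphi=\varphi_{\epsilon,s}$ solve \eqref{ContinuityPath}. Since $\tilde c_n(s)=s\tilde c_n=0$, Corollary~\ref{restriction} applied to $\tilde f_{2,s}$ shows that each slice $\chi_{\varphi_t}$ lies in $\Upsilon_{g_{2,s}}$.
\item Hence $g_{2,s}(\chi_{\varphi_t})>0$. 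The gradient term $\sqrt{-1}\partial\dot\varphi\wedge\bar\partial\dot\varphi\wedge g_{2,s}'(\chi_{\varphi_t})$ is nonnegative by Lemma~\ref{positivity}, since $g_{2,s}'$ is strictly positive on $\Upsilon_{g_{2,s}}$.
\item The right-hand side in Lemma~\ref{geodesic lemma}, redone with $\tilde c_k(s)$, is nonnegative by Proposition~\ref{extension} applied to $\tilde f_{2,s}$, which is right-Noetherian by Lemma~\ref{PathRightNoeth}.
\item Therefore $\ddot\varphi_t\ge 0$ pointwise. A function convex in $t\in[0,1]$ lies below the chord through its endpoint values, so $\varphi\le\psi_t=\overline\varphi$.
\end{enumerate}

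\textbf{Subsolution.} I set $\underline\varphi=\psi_t+A(t^2-t)$, with $A=A(\epsilon)$ to be chosen. It satisfies the boundary conditions, and at the boundary $|\frac{d}{dt}\underline\varphi|\le\|\varphi_0-\varphi_1\|_{L^\infty}+A$.
\begin{enumerate}
\item By Proposition~\ref{Convexity}, each slice $\chi_{\psi_t}$ lies in $\Upsilon_{g_2}$ with uniform bounds: eigenvalues bounded above, and $g_2$ bounded below by the lower bounds for $\varphi_0,\varphi_1$ and convexity.
\item Write $\Theta=\pi_X^*\chi+\sqrt{-1}D\bar D\underline\varphi$ in an $\omega_\epsilon$-adapted frame. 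Its spatial block is $\chi_{\psi_t}$, its mixed entries are $\frac{1}{2\bar z_0}\partial_X(\varphi_1-\varphi_0)$, and its normal entry is $\frac{2A}{4|z_0|^2\epsilon^2}$ in $\omega_\epsilon$-units.
\item Diagonalize via the Schur complement with respect to the spatial block; this block is Kähler because $c^{(2)}_{n-1}=0$. Then use that $\tilde g_2$ is multi-affine in the last eigenvalue, together with the eigenvalue monotonicity underlying Proposition~\ref{Convexity}.
\item This reduces the condition $\lambda(\omega_\epsilon^{-1}\Theta)\in\Upsilon_{\tilde g_2}$ to the scalar inequality of Proposition~\ref{extension}(3):
\[
\frac{A}{2|z_0|^2\epsilon^2}-\big|\partial_X(\varphi_1-\varphi_0)\big|^2_{\chi_{\psi_t}}\cdot\frac{1}{4|z_0|^2\epsilon^2}\;>\;-\frac{\tilde g_2(\lambda',0)}{g_2(\lambda')},
\]
where $\lambda'$ denotes the spatial eigenvalues.
\item The right-hand side is bounded by the stated data. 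So it suffices to take $A=C\epsilon^2+C'$, with $C'$ controlling the mixed term.
\end{enumerate}
Since $\Upsilon_{\tilde g_2}\subset\Upsilon_{\tilde g_{2,s}}$ (Lemma~\ref{PathRightNoeth} and Theorem~\ref{ThmUpsilonDominance}), the form $\Theta$ also lies in $\Upsilon_{\tilde g_{2,s}}$ for every $s$. This makes $\underline\varphi$ independent of $s$.

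\textbf{Comparison.} The statement $\underline\varphi\le\varphi_{\epsilon,s}$ follows from a maximum principle.
\begin{enumerate}
\item Suppose $\underline\varphi-\varphi$ has a positive interior maximum. At that point $\Theta\le\pi_X^*\chi+\sqrt{-1}D\bar D\varphi$.
\item $\Theta$ lies in $\Upsilon_{\tilde g_{2,s}}$, and the cone is closed under adding nonnegative Hermitian matrices (PRT plus Weyl monotonicity). So the form of $\varphi$ also lies in the open cone $\Upsilon_{\tilde g_{2,s}}$.
\item But $\tilde g_{2,s}=0$ there, and the form lies in $\Upsilon^1$, so by Lemma~\ref{rewrite-equation} it lies on $\partial\Upsilon_{\tilde g_{2,s}}$. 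This is a contradiction.
\end{enumerate}
To make the inequality strict, replace $\underline\varphi$ by $\underline\varphi+\delta(t^2-t)$ and let $\delta\to 0$.

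\textbf{Main obstacle.} The hardest step is controlling the mixed term so that the boundary derivative of $\underline\varphi$ depends only on $\|\varphi_0-\varphi_1\|_{L^\infty}$ and not on gradients of $\varphi_1-\varphi_0$. The naive choice of $C'$ depends on $\|\partial(\varphi_1-\varphi_0)\|^2_{\chi_{\psi_t}}$. If that is not acceptable, I would first try the Chen-type replacement of $\psi_t$ by $\max\{\varphi_0+\mathrm{(concave~piece)},\varphi_1+\cdots\}$ in regularized form. There $\underline\varphi$ near $t=0$ is $\varphi_0-K t+At^2$ with $K\sim\|\varphi_0-\varphi_1\|_{L^\infty}$, which has no mixed term. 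The regularized maximum preserves the convex cone by Proposition~\ref{Convexity}.
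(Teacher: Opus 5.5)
Your supersolution and comparison arguments are fine. The paper gets $\ddot\varphi_{\epsilon,s}\ge 0$ more directly: since $\tilde c_n(s)=0$, Lemma~\ref{positivity} makes $\pi_X^*\chi+\sqrt{-1}D\bar D\varphi_{\epsilon,s}$ K\"ahler, and its $\sqrt{-1}dz_0\wedge d\bar z_0$ coefficient is $\ddot\varphi_{\epsilon,s}/(4|z_0|^2)$.

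The gap is in the subsolution you actually propose, $\underline\varphi=\psi_t+A(t^2-t)$, which cannot give the third bullet. In an $\omega_\epsilon$-unitary frame the normal entry of $\Theta$ is $a=A/(2|z_0|^2\epsilon^2)$ and the mixed entries are $b_i=\partial_i(\varphi_1-\varphi_0)/(2\bar z_0\epsilon)$. The $\epsilon^{-2}$ factors therefore cancel, and membership in $\Upsilon_{\tilde g_2}$ forces $A\ge c\,\sup_X|\partial_X(\varphi_1-\varphi_0)|_\omega^2$ with $c>0$ independent of $\epsilon$. Since $\frac{d}{dt}\underline\varphi=\varphi_1-\varphi_0\mp A$ at the two ends, you only obtain the bound $\|\varphi_0-\varphi_1\|_{L^\infty}+C\epsilon^2+c\|\partial(\varphi_1-\varphi_0)\|_{L^\infty}^2$. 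The data allowed in the statement (upper bounds on $\lambda(\omega^{-1}\chi_{\varphi_i})$, lower bounds on $g_2$) control only complex Hessians. They give no inequality of the form $\|\partial u\|^2_{L^\infty}\le C\|u\|_{L^\infty}$. The linear dependence on $\|\varphi_0-\varphi_1\|_{L^\infty}+\epsilon^2$ is what the lemma asserts, and it is exactly what Proposition~\ref{ddtestimate} inherits.

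Separately, your step 4 is incorrect as written. Writing $\Theta=P^*\operatorname{diag}(M,a-b^*M^{-1}b)P$ is a congruence, not a similarity, so it does not preserve $\lambda(\omega_\epsilon^{-1}\Theta)$. Expand each $\sigma_k$ into principal minors, in a frame where the spatial block is $\operatorname{diag}(\mu)$. This gives
\[
\tilde g_2\bigl(\lambda(\omega_\epsilon^{-1}\Theta)\bigr)=\tilde g_2(\mu,0)+a\,g_2(\mu)-\sum_{i=1}^n|b_i|^2\,\partial_i g_2(\mu).
\]
On $\Upsilon_{g_2}$ we have $g_2(\mu_{;i},0)\le 0$, by the last assertion of Proposition~\ref{extension} one dimension down, using $c^{(2)}_{n-1}=0$. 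Hence $\mu_i\partial_ig_2(\mu)\ge g_2(\mu)$. Your threshold uses $|b|^2_{\chi_{\psi_t}}=\sum_i|b_i|^2/\mu_i$ in place of $\sum_i|b_i|^2\partial_ig_2/g_2$, so your inequality does not imply membership in general; equality holds only in the $\sigma_n$ case. The qualitative conclusion, that large $A$ suffices, survives because $\partial_ig_2/g_2$ is bounded by the data.

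The construction you relegate to your last paragraph is precisely the paper's proof, and it is the one to use.
\begin{enumerate}
\item Each piece must be $\pi_X^*\varphi_i$ plus a function of $z_0$ alone. The paper takes $\psi_0=\pi_X^*\varphi_0+A_0\epsilon^2(|z_0|^2-1)-B_0\log|z_0|$. Your $\varphi_0-Kt+At^2$ works equally well if $A$ is a fixed multiple of $\epsilon^2$; note this correction is convex in $t$, not concave.
\item Then $\pi_X^*\chi+\sqrt{-1}D\bar D\psi_0=\pi_X^*\chi_{\varphi_0}+A_0\epsilon^2\sqrt{-1}dz_0\wedge d\bar z_0$ is block diagonal, with $\omega_\epsilon$-eigenvalues $\lambda(\omega^{-1}\chi_{\varphi_0})$ together with $A_0$.
\item Proposition~\ref{extension}(3) then gives membership in $\Upsilon_{\tilde g_2}(\omega_\epsilon)$, with $A_0$ depending only on the stated data and not on $\epsilon$.
\item Choose $B_0=\|\varphi_0-\varphi_1\|_{L^\infty}+A_0\epsilon^2(e^2-1)+\epsilon^2$, and $B_1$ symmetrically. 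Then each piece lies $\epsilon^2$ below the other on the opposite boundary component.
\item Take a regularized maximum with parameters small compared with $\epsilon^2$. It equals $\psi_0$ near $|z_0|=1$ and $\psi_1$ near $|z_0|=e$. This gives the boundary values and the slope bound $|2A_0\epsilon^2-B_0|\le C(\|\varphi_0-\varphi_1\|_{L^\infty}+\epsilon^2)$.
\item Membership in the cone is preserved under the regularized maximum by Proposition~\ref{Convexity} together with PRT.
\end{enumerate}
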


        \begin{proof}

        We begin with two functions $\varphi_0,\varphi_1 \in \mathcal{H}$, which provide the boundary data for our $(n+1)$-dimensional geodesic equation.
        
        First, we will construct a supersolution. Since the solution $\pi_{X}^{*}\chi+\sqrt{-1}D\bar{D}\varphi_{\epsilon, s}$ is K\"ahler, we see that $\frac{d^2}{dt^2}\varphi_{\epsilon, s}\ge 0$ by testing it on the $\sqrt{-1}dz_0\wedge d\bar z_0$ direction.
          Let $\overline{\varphi}=(1-\log \lvert z_0\rvert)\varphi_0+\log \lvert z_0\rvert\varphi_1$. Then $\overline{\varphi}$ satisfies the boundary condition and $\varphi_{\epsilon, s}\le \overline{\varphi}$ by convexity.
          
          Then we construct the subsolution. In one higher dimension, let
            \[
            \omega_{\epsilon} = \pi_{X}^*\omega + \epsilon^2\sqrt{-1} dz_0 \wedge d\bar z_0
            \]
            be a K\"{a}hler form on $\mathcal{X}$.
            We first construct $\psi_0 = \pi_{X}^{*}\varphi_0 + \eta_0(z_0)$ such that
           \[
            \pi_{X}^{*}\chi+\sqrt{-1}D\bar{D}\psi_0 \in \Upsilon_{\tilde g}(\omega_\epsilon),
            \]
            where $\eta_0(z_0) = A_0 \epsilon^2 ( \lvert z_0\rvert^2 - 1 ) -B_0\log\lvert z_0\rvert $ for some constants $A_0$, $B_0$ such that $A_0$ is independent of $\epsilon$.
            
            Firstly, it is trivial to see that
            $\psi_0|_{\{\lvert z_0\rvert=1\}}=\varphi_0$.          
            Then we compute
            $$
            \pi_{X}^*\chi + \sqrt{-1}D\bar{D}\psi_0 = \pi_{X}^*\chi +\sqrt{-1}\partial_X\bar\partial_X\varphi_0 + \sqrt{-1}\partial_{z_0}\bar\partial_{z_0}\eta_0=\pi_{X}^*\chi_{\varphi_0} + A_0 \epsilon^2\sqrt{-1}dz_0\wedge d\bar z_0,$$
            where $\chi_{\varphi_0}=\chi+\sqrt{-1}\partial_X\bar\partial_X\varphi_0$.

            Note that the eigenvalues of $\omega_\epsilon^{-1}(\pi_{X}^*\chi + \sqrt{-1}D\bar{D}\psi_0)$ are the eigenvalues of $\omega^{-1}\chi_{\varphi_0}$ together with $A_0$. We choose $A_0>0$ depending only on the upper bound of $\lambda(\omega^{-1}\chi_{\varphi_0})$ and the positive lower bound of $g_2(\lambda(\omega^{-1}\chi_{\varphi_i}))$ such that $A_0$ is larger than the function $-\frac{\tilde g_2(\lambda_1, ..., \lambda_n, 0)}{g_2(\lambda_1,...\lambda_n)}$ in Proposition \ref{extension} for all $(\lambda_1,...\lambda_n)=\lambda(\omega^{-1}\chi_{\varphi_0})$. Then $\pi_{X}^*\chi + \sqrt{-1}D\bar{D}\psi_0\in \Upsilon_{\tilde g_2}(\omega_\epsilon)$ for all $\epsilon$.

            For $\varphi_1 \in \mathcal{H}$, we similarly define
            $$
            \psi_1 = \pi_{X}^{*}\varphi_1 + \eta_1(z_0)$$
            where $\eta_1(z_0) = A_1 \epsilon^2 ( \lvert z_0\rvert^2 - e^2 ) + B_1 \log(\frac{\lvert z_0\rvert}{e})$ for some constants $A_1,B_1$.
            The same argument proves that $\pi_{X}^*\chi + \sqrt{-1}D\bar{D}\psi_1\in \Upsilon_{\tilde g_2}(\omega_\epsilon)$ and $\psi_1 |_{\{\lvert z_0\rvert=e\}} = \varphi_1$ for $A_1$ chosen similarly.

            We now have
            $$
            \begin{cases}
                \psi_0 = \pi_{X}^{*}\varphi_0 + A_0\epsilon^2( \lvert z_0\rvert^2 - 1 ) - B_0 \log\lvert z_0\rvert \\
                \psi_1 = \pi_{X}^{*}\varphi_1 + A_1\epsilon^2( \lvert z_0\rvert^2 - e^2 ) + B_1 \log(\frac{\lvert z_0\rvert}{e})
            \end{cases}
            $$
            Moreover,
            $$
            \begin{cases}
                \psi_0(x,z_0)|_{\{\lvert z_0\rvert=e\}} = \varphi_0(x) + A_0\epsilon^2( e^2 - 1 ) - B_0  \\
                \psi_1(x,z_0)|_{\{\lvert z_0\rvert=1\}} = \varphi_1(x) + A_1\epsilon^2( 1 - e^2 ) - B_1 
            \end{cases}
            $$            
            We can choose $B_0$ and $B_1$ such that
            $$
            \begin{cases}
                \varphi_0(x) + A_0\epsilon^2( e^2 - 1 ) - B_0 \le \varphi_1(x) -\epsilon^2 = \psi_1(x,z_0)|_{\{\lvert z_0\rvert=e\}} -\epsilon^2      \\
                \varphi_1(x) + A_1\epsilon^2( 1 - e^2 ) - B_1 \le \varphi_0(x) -\epsilon^2 = \psi_0(x,z_0)|_{\{\lvert z_0\rvert=1\}} -\epsilon^2    
            \end{cases}
            $$          
            In fact, we may choose $B_0$ and $B_1$ as follows:
            $$
            \begin{cases}
                B_0 = \| \varphi_0 - \varphi_1 \|_{L^{\infty}(X)} + A_0 \epsilon^2 (e^2-1) + \epsilon^2, \\
                B_1 = \| \varphi_0 - \varphi_1 \|_{L^{\infty}(X)} + A_1\epsilon^2( 1-e^2 ) + \epsilon^2.               
            \end{cases}
            $$
            
            Now we take the regularized maximum $\underline{\varphi} = \operatorname{RegMax}\{\psi_0,\psi_1\}$ defined in Lemma I.5.18 of \cite{DemaillyBook}. By Proposition \ref{Convexity}, the $\Upsilon$-cone on the space of Hermitian matrices is convex. It also passes the PRT by the Courant–Fischer-Weyl min–max principle. For example, see Proposition 2.21 of \cite{ChenNieXuNumericalCriterion}.
            Then it is standard to see that $\underline{\varphi}$ is a smooth function satisfying the boundary conditions and $\pi_{X}^{*}\chi+\sqrt{-1}D\bar{D}\underline{\varphi} \in \Upsilon_{\tilde g_2}(\omega_\epsilon)$ for all $\epsilon<1$. For example, see Page 561-562 of \cite{ChenJEquationDHYM} for the computation of the complex Hessian of the regularized maximum function. Note that $\underline{\varphi}=\psi_0$ in a neighborhood of $\{\lvert z_0\rvert=1\}$ and $\underline{\varphi}=\psi_1$ in a neighborhood of $\{\lvert z_0\rvert=e\}$. This provides a bound on $|\frac{\partial}{\partial t}\underline{\varphi}|$ on $\partial\mathcal{X}$.

            Finally, since $\pi^*_X\chi + \sqrt{-1}D\bar{D}\underline{\varphi} \in \Upsilon_{\tilde g_2}(\omega_{\epsilon}) \subset \Upsilon_{\tilde g_{2,s}}(\omega_{\epsilon})$, the maximum principle implies that $\underline{\varphi} \le \varphi_{\epsilon, s}$.            
        \end{proof}
        
    As a corollary, we get the following bound on $|\frac{d}{dt}\varphi_{\epsilon, s}|$.

    \begin{prop}
            If $ \varphi_{\epsilon, s} : \mathcal{X} \to \mathbb{R} $ solves \eqref{ContinuityPath}, then there exists a constant $C$ depending only on the upper bound of $\lambda(\omega^{-1}\chi_{\varphi_i})$, $i=0,1$, and the positive lower bound of $g_2(\lambda(\omega^{-1}\chi_{\varphi_i}))$, $i=0,1$  such that
            \[
            |\frac{d}{dt}\varphi_{\epsilon, s}|\le C(\| \varphi_0 - \varphi_1 \|_{L^{\infty}(X)}+\epsilon^2)\]
            on $\mathcal{X}$.
    \label{ddtestimate}
    \end{prop}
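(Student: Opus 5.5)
The plan is to combine the convexity of $\varphi_{\epsilon,s}$ in $t$ with the boundary comparison provided by Lemma~\ref{supsubsolutions}. The first step is to record that, as in the proof of Lemma~\ref{supsubsolutions}, the admissibility condition $\pi_X^*\chi+\sqrt{-1}D\bar D\varphi_{\epsilon,s}\in\Upsilon^1_{\tilde g_{2,s}}$ together with $c^{(2)}_{n-1}=0$ (hence $\tilde c_n(s)=0$) and Lemma~\ref{positivity} makes this form K\"ahler. Testing it against the $\sqrt{-1}dz_0\wedge d\bar z_0$ direction, and using the $S^1$-invariance with $\partial_{z_0}\partial_{\bar z_0}\varphi=\frac{1}{4|z_0|^2}\frac{d^2\varphi_t}{dt^2}$, gives $\frac{d^2}{dt^2}\varphi_{\epsilon,s}\ge 0$. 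Consequently, for each fixed $x\in X$, the function $t\mapsto\frac{d}{dt}\varphi_{\epsilon,s}(x,t)$ is nondecreasing on $[0,1]$, so
\[
\frac{d}{dt}\varphi_{\epsilon,s}(x,0)\le \frac{d}{dt}\varphi_{\epsilon,s}(x,t)\le \frac{d}{dt}\varphi_{\epsilon,s}(x,1).
\]
It therefore suffices to bound the $t$-derivative from above at $t=1$ and from below at $t=0$.

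The boundary values come from the barriers. Since $\underline{\varphi}\le\varphi_{\epsilon,s}\le\overline{\varphi}$ and all three functions agree on $\partial\mathcal{X}$, at $t=0$ we have $\frac{d}{dt}\varphi_{\epsilon,s}\ge\frac{d}{dt}\underline{\varphi}$: the difference $\varphi_{\epsilon,s}-\underline{\varphi}$ is nonnegative and vanishes at $t=0$, so its one-sided derivative there is nonnegative. At $t=1$ the same difference vanishes from the other side, which gives $\frac{d}{dt}\varphi_{\epsilon,s}\le\frac{d}{dt}\underline{\varphi}$. (Alternatively one could use $\overline{\varphi}$ and the reverse inequalities, but $\underline{\varphi}$ suffices.) The third bullet of Lemma~\ref{supsubsolutions} bounds $|\frac{d}{dt}\underline{\varphi}|$ on $\partial\mathcal{X}$ by $C(\|\varphi_0-\varphi_1\|_{L^\infty}+\epsilon^2)$, with $C$ depending only on the stated quantities. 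Combining this with the monotonicity gives
\[
-C(\|\varphi_0-\varphi_1\|_{L^\infty}+\epsilon^2)\le\frac{d}{dt}\varphi_{\epsilon,s}\le C(\|\varphi_0-\varphi_1\|_{L^\infty}+\epsilon^2)
\]
everywhere on $\mathcal{X}$.

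The only point needing care is the sign bookkeeping at the two boundary components, where the inequalities reverse. The explicit form of $\underline{\varphi}$ near the boundary confirms the constant: near $|z_0|=1$ it equals $\psi_0$, whose derivative $2A_0\epsilon^2-B_0$ is controlled by $\|\varphi_0-\varphi_1\|_{L^\infty}+(1+A_0(e^2+1))\epsilon^2$; near $|z_0|=e$ it equals $\psi_1$, which is controlled similarly. Since the constants $A_i$ depend only on the upper bound of $\lambda(\omega^{-1}\chi_{\varphi_i})$ and the positive lower bound of $g_2$, the resulting estimate is uniform in $s$ and $\epsilon$.
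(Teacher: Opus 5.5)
Your proof is correct and is essentially the paper's argument. Both combine convexity of $\varphi_{\epsilon,s}$ in $t$ (from the K\"ahler condition) with comparison against the barriers of Lemma~\ref{supsubsolutions} at $\partial\mathcal{X}$. Your observation that $\underline{\varphi}$ alone suffices is a valid streamlining of the paper's two-barrier sandwich, since it gives exactly the lower bound at $t=0$ and the upper bound at $t=1$ that the monotonicity step needs.

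Your parenthetical is wrong, however, if read as saying $\overline{\varphi}$ could replace $\underline{\varphi}$. Since $\overline{\varphi}$ is affine in $t$, it only yields $\frac{d}{dt}\varphi_{\epsilon,s}(\cdot,0)\le\varphi_1-\varphi_0\le\frac{d}{dt}\varphi_{\epsilon,s}(\cdot,1)$, and these inequalities point the wrong way for the monotonicity argument.
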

    \begin{proof}
   By Lemma \ref{supsubsolutions}, using the facts that $ \underline{\varphi} \leq \varphi_{\epsilon, s} \leq \overline{\varphi}$ holds on $\mathcal{X}$ and that these functions agree on $\partial\mathcal{X}$, we see that
    \[
    |\frac{d}{dt}\varphi_{\epsilon, s}|\le \max\{|\frac{d}{dt}\overline{\varphi}|, |\frac{d}{dt}\underline{\varphi}|\}
    \le C(\| \varphi_0 - \varphi_1 \|_{L^{\infty}(X)}+\epsilon^2)
    \]
    on $\partial\mathcal{X}$.
    From the K\"ahler property, we have $\frac{d^{2}\varphi_{\epsilon, s}}{dt^{2}}\ge 0$. This implies that $\frac{d\varphi_{\epsilon, s}}{dt}$ is non-decreasing in $t$, and the maximum of its absolute value occurs at an endpoint that is on $\partial\mathcal{X}$. This finishes the proof.
    \end{proof}

\section{Complex Hessian Estimates}
\label{C11section}
        In this section, we prove $C^{1,\bar 1}$ estimates which depend on $\epsilon$ but are independent of $s$.  We will fix $\epsilon>0$ and use the supersolution and subsolution in Lemma \ref{supsubsolutions}. All constants in this section will depend on $\epsilon$, $X$, $\chi$, $\omega$, $\lvert\varphi_0\rvert_{C^{\infty}}$, $\lvert\varphi_1\rvert_{C^{\infty}}$, and the positive lower bound of $g_2(\lambda(\omega^{-1}\chi_{\varphi_i}))$, $i=0,1$.  To simplify notations, we also write the solution $\varphi_{\epsilon,s}$ in \eqref{ContinuityPath} as $\varphi_{s}$, and write $\chi_{\varphi_{\epsilon,s}}=\pi_X^*\chi+\sqrt{-1}D\bar D\varphi_{\epsilon, s}$ as $\chi_{s}$.
        We will also write this equation\eqref{ContinuityPath} as $1-h_s(\chi_s)=0$, where 
        \[h_s(\chi_s)=\frac{-\sum_{k=0}^{n} \tilde{c}_k(s) \binom{n+1}{k} (\pi^*_{X}\chi + \sqrt{-1}D\bar{D}\varphi_{s} )^{k}\wedge (\pi_{X}^*\omega + \epsilon^{2}\sqrt{-1}dz_0\wedge  d\bar z_0)^{n+1-k}}{(\pi^*_{X}\chi + \sqrt{-1}D\bar{D}\varphi_s)^{n+1}}.\]
        Observe that this function $h_s$ is the same as that in \cite{LinInverseSigmaSolvability}. Thus, we can use the properties established there.
        
        The linearisation of $1-h_s(\chi_\varphi)$ is given by 
            \[\mathcal{L}_s(\psi)=-Dh_s(\psi)=-h_s^{i\bar{j}}\psi_{i\bar{j}},\]
        where $Dh_s$ is the linearisation of $h_s$.

        For a point $p\in\partial\mathcal{X}= X \times \partial\mathcal{A},$
        without loss of generality we can assume that $p\in X \times \{ t=0 \}.$
        Near $p$, write $\mathcal{A}$ as $\{t\ge 0\}$.
        Let $B_{\delta}(0)\subset X$ be the $\delta$-ball in $X$ centered at $p$ and $z_{X} = (z_1,\cdots,z_n) $ be the coordinates on $X$.
        Set $\Omega_{\delta}= B_{\delta}(0) \times \{ 0\le t\le \delta \}$,
        where $t= \log(\lvert z_0\rvert)$.        
        Consider the function $\varphi_s-\underline{\varphi}$.
        Put
        \[
        T_s:=\sum_{i=0}^{n}(-h_s^{i\bar i}).
        \]

        For any $0\le s \le 1$, Lemma \ref{PathRightNoeth} implies that $\Upsilon_{\tilde{g}_{2}}^{i}\subset\Upsilon_{\tilde{g}_{2,s}}^{i}$. Also, we have the solution $\varphi_{s}$, and $\underline{\varphi} \in \Upsilon^{1}_{\tilde{g}_2}(\omega_{\epsilon})$.
        So Lemma~3.7 of \cite{LinInverseSigmaSolvability} gives uniform
        constants $N>0$ and $\kappa>0$ such that, on the set
        $\{\lambda_1>N\}$,
        \begin{equation}
        \mathcal{L}_s(\varphi_s-\underline{\varphi})=(-h_s^{i\bar{j}})(\varphi_s-\underline{\varphi})_{i\bar{j}}=-h_s^{i\bar{j}}(\varphi_{s,i\bar{j}}-\underline{\varphi}_{i\bar{j}})
        \le-\kappa T_s,
        \label{Lv-estimate} 
        \end{equation}
        where $\lambda_1\ge...\ge\lambda_{n+1}$ are the eigenvalues of $\omega_\epsilon^{-1}(\pi^*\chi+\sqrt{-1}D\bar D\varphi_s)$.
        \begin{remark}
            By Lemma~3.7 of \cite{LinInverseSigmaSolvability}, the constants $N,\kappa$ depend on the subsolution $\underline{\varphi}$, the coefficients of $\tilde{f}_2$ and the  manifold $\mathcal{X}$.
        \end{remark}
        
        We handle the complementary region $\{\lambda_1 \le N\}$ in the following way. By the assumption that $\tilde c_{n}(s)=0$ and Lemma \ref{rewrite-equation}, the region \[\{\lambda_1 \le N\}\cap\Upsilon^1_{\tilde g_{2,s}}\cap\{\tilde g_{2,s}=0\}=\partial\Upsilon_{\tilde g_{2,s}} \cap \{0 \le \lambda_{n+1}\le ...\le \lambda_1 \le N\}\] is a compact subset of $\Upsilon^1_{\tilde g_{2,s}}$. So $\partial_{\lambda_i} \tilde g_{2,s}$ has a positive lower bound and an upper bound. Consequently, by the compactness of $s\in [0,1]$, the linearisation is uniformly elliptic there. In particular, there are uniform constants $c_N,C_N>0$
        such that
        \begin{equation}
        -h_s^{0\overline{0}}\ge c_NT_s,
        \qquad
        |\mathcal{L}_s(\varphi_s-\underline{\varphi})|\le C_NT_s
        \qquad\text{on }\{\lambda_1\le N\}.
        \label{bounded-eigenvalue-ellipticity}
        \end{equation}

        \begin{lem}
        \label{mixed-derivative-bound}
            Let $p$ be any point on $\partial \mathcal{X}$.
            There exists a constant $C$ independent of $s$ such that 
            \[\left\lvert\frac{\partial^{2}\varphi_s}{\partial z_{i}\partial\bar{z}_{0}}(p)\right\rvert\le C  \left(\max_{X\times\mathcal{A}}\lvert\nabla\varphi_s\rvert+1\right),\]
            for all $i=1,2,...,n$.
        \end{lem}
        \begin{proof}
            Let $\tilde{D}$ be any constant linear first-order operator (in the spatial directions only) near the boundary, and set $M_s=\max_{X\times\mathcal{A}}\big(\lvert\nabla\varphi_s\rvert+1\big)$.  On the product neighborhood $\Omega_{\delta} =  B_{\delta}(0) \times \{ 0\le t\le \delta \}$, put
            \[
            \rho=t(2\delta-t).
            \]
            Thus $\rho\ge0$ on $\Omega_\delta$, $\rho=0$ at $p$, and
            \begin{equation}
            \mathcal{L}_s(\rho)= \frac{1}{2\lvert z_0\rvert^2} h_s^{0\overline{0}}\le \frac{1}{2e^{2}}h_s^{0\overline{0}}\le 0
            \label{rho-barrier}
            \end{equation}
            Choose positive constants $A = aM_s$, $B =bM_s$, and $K = kM_s$, each a sufficiently large multiple of $M_s$, in the order specified below, and consider
            \[
            w_s=A(\varphi_s-\underline{\varphi})+B\lvert z_X\rvert^{2}+K\rho
            -\tilde{D}(\varphi_s-\underline{\varphi}).
            \]
            On the $t=0$ part of the boundary,
            $\tilde D(\varphi_s-\underline\varphi)=0$.  
            On the $t=\delta$ part of the boundary, $\rho =\delta^2$. We choose $k>k_1$, $k_1$ sufficiently large, so that
            $K\rho = kM_s\delta^2 \ge |\tilde D(\varphi_s-\underline\varphi)|$.
            On the $\partial B_{\delta}(0)$ part of the boundary,
            we  choose $b$ sufficiently large, so that
            $B\lvert z_X\rvert^{2} = bM_s\delta^2\ge |\tilde D(\varphi_s-\underline\varphi)|$.
            Since $\varphi_s \ge \underline{\varphi}$, it follows that $w_s\ge0$ on
            $\partial\Omega_\delta$, while $w_s(0)=0$.
            Here we can use the compactness of $s$ to obtain the uniform lower bound of $k,b$.

           We next estimate $\mathcal L_s(w_s)$.  
            By computation, $L_s(B|z_X|^2)$ is bounded by a
            constant times $BT_s$.  We now establish the estimate
            for the differentiated term. It is easy to see that for any Hermitian matrices $A$, $B$, $C$ with $A$ positive definite, we have
            \[\frac{d}{dt}|_{t=0}\lambda((A+tC)^{-1}B)=\frac{d}{dt}|_{t=0}\lambda(A^{-1}(B+tD))\]
            for $D=-\frac{1}{2}(CA^{-1}B+BA^{-1}C)$.
            Consider the equation
            $1-h_s(\chi_s)=0$. Recall that it only depends on $\lambda(\omega_\epsilon^{-1}\chi_s)$, and $h_s^{ij}$ means the derivative of $h_s$ when we fix $\omega_\epsilon$ and differentiate $\chi_s$.
            
           So
           \[
                 0=-\tilde{D}(h_s(\chi_s))=-h_s^{i\bar{j}}(\tilde{D}\chi_{s,i\bar{j}}-\frac{1}{2}(\tilde{D}\omega_\epsilon)_{i\bar k}\omega_\epsilon^{\bar kl}\chi_{s, l\bar j}-\frac{1}{2}\chi_{s, i\bar k}\omega_\epsilon^{\bar kl}(\tilde{D}\omega_\epsilon)_{l\bar j})
           \]
           By (3.19) of \cite{LinInverseSigmaSolvability},
           \[ |h_s^{i\bar{j}}\tilde{D}(\pi_{X}^{*}\chi)_{i\bar{j}}+h_s^{i\bar{j}}\tilde{D}\varphi_{s, i\bar{j}}|=|h_s^{i\bar{j}}\tilde{D}\chi_{s,i\bar{j}}|=\frac{1}{2}|h_s^{i\bar{j}}((\tilde{D}\omega_\epsilon)_{i\bar k}\omega_\epsilon^{\bar kl}\chi_{s, l\bar j}+\chi_{s, i\bar k}\omega_\epsilon^{\bar kl}(\tilde{D}\omega_\epsilon)_{l\bar j})| \le C\]
           for a constant $C$ independent of $s$.
            We now have
            \begin{equation}
                -\mathcal{L}_s(\tilde{D}(\varphi_s-\underline{\varphi}))=h_s^{i\bar{j}}\tilde{D}\varphi_{s, i\bar{j}}-h_s^{i\bar{j}}\tilde{D}(\underline{\varphi})_{i\bar{j}}
                \le CT_s+C.
            \end{equation}
            By (3.13) of \cite{LinInverseSigmaSolvability}, there exists a constant $C$ independent of $s$ such that $T_s\ge C>0$. For a different constant $C$, this implies that
            \begin{equation}
            \label{LD}
                -\mathcal{L}_s(\tilde{D}(\varphi_s-\underline{\varphi})) \le CT_s
            \end{equation}
            On $\{\lambda_1>N\}$, equations \eqref{Lv-estimate},
            \eqref{rho-barrier} and \eqref{LD} give
            \[
            \mathcal L_s(w_s)\le(-A\kappa+C(B+1))T_s.
            \]
            After $B$ has been fixed, choose $A=aM_s$ with $a$ sufficiently
            large so that the right-hand side is negative.  On
            $\{\lambda_1\le N\}$, equations
            \eqref{bounded-eigenvalue-ellipticity}, \eqref{rho-barrier}
            and \eqref{LD}
            give
            \[
            \mathcal{L}_s(w_s)
            \le \big(C_NA+C_1(B+1)-\tfrac{1}{2e^2}c_NK\big)T_s.
            \]
            Finally choose $K=kM_s$ with $k > k_2$, $k_2$ sufficiently large, so that
            this expression is negative as well.  Therefore
            $\mathcal{L}_s(w_s)<0$ throughout $\Omega_\delta$.
            Actually we need to choose $K=kM_s$ with $k>\max(k_1,k_2).$

            The maximum principle implies that $w_s\ge 0$ in $\Omega_{\delta}$ . Since $w_s(0)=0$, the Hopf boundary lemma gives $\frac{\partial w_s}{\partial t}\ge 0$. Hence, using Proposition \ref{ddtestimate},
            \[\frac{\partial}{\partial t}\tilde{D}\varphi_s(0)\le C_2 M_s\]
            for some constant $C_2$. Since $\tilde{D}$ is any first-order operator near $\partial(X\times \mathcal{A})$, replacing $\tilde{D}$ with $-\tilde{D}$ gives
            \[-\frac{\partial}{\partial t}\tilde{D}\varphi_s(0)\le C_2 M_s.\]
            Using the $S^1$-invariance, \[\left\lvert \frac{\partial}{\partial z_{0}}\tilde{D}\varphi_s(0)\right\rvert <C_3 M_s.\]

            We conclude that
            \[\left\lvert\frac{\partial^{2}\varphi_s}{\partial z_{i}\partial\bar{z}_{0}}(p)\right\rvert\le C_4 \left(\max_{X\times\mathcal{A}}\lvert\nabla\varphi_s\rvert+1\right).\]
        \end{proof}

        Now we prove the $C^{1, \bar 1}$-estimate compared to the $C^1$-norm.
        
        \begin{prop}
        There exists a constant $C$ such that 
        \[
        \max_{X\times\mathcal{A}}\lvert D\bar D\varphi_s\rvert \le C \left(\max_{X\times\mathcal{A}}\lvert\nabla\varphi_s\rvert^{2}+1\right).
        \]
        \end{prop}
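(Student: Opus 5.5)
The proof reduces the global complex Hessian bound to a boundary estimate plus an interior maximum principle estimate, each of size $C(M_s^2+1)$ with $M_s=\max_{X\times\mathcal A}(\lvert\nabla\varphi_s\rvert+1)$.

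Since $\chi_s=\pi_X^*\chi+\sqrt{-1}D\bar D\varphi_s$ is K\"ahler (by Lemma \ref{positivity}, since $c^{(2)}_{n-1}=0$), it suffices to bound the largest eigenvalue $\lambda_1$ of $\omega_\epsilon^{-1}\chi_s$ from above. Indeed, positivity then bounds all entries of $D\bar D\varphi_s$.

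\textbf{Boundary estimate.} Fix $p\in X\times\{t=0\}$ and split the Hessian into three blocks.
\begin{enumerate}
\item The tangential--tangential block $\varphi_{s,i\bar j}$, $1\le i,j\le n$, equals $\varphi_{0,i\bar j}$ because $\varphi_s=\varphi_0$ on the boundary slice. It is therefore bounded.
\item The mixed block $\varphi_{s,i\bar 0}$ is bounded by $CM_s$ by Lemma \ref{mixed-derivative-bound}.
\item For the normal--normal entry $\varphi_{s,0\bar 0}$, write the equation at $p$ using the multi-affine structure in the $z_0$ eigen-direction, as in Proposition \ref{extension}. Let $\chi'$ denote the restriction to the slice, computed with respect to $\omega$. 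By a Schur complement / cofactor expansion,
\[
\tilde g_{2,s}(\lambda(\omega_\epsilon^{-1}\chi_s))=\tilde g_{2,s}(\lambda',0)+\epsilon^{-2}\Big(\chi_{s,0\bar0}-\sum\chi_{s,0\bar i}(\cdots)\chi_{s,j\bar 0}\Big)\,g_{2,s}(\lambda')+\ldots
\]
The restricted eigenvalues $\lambda'=\lambda(\omega^{-1}\chi_{\varphi_0})$ lie in a fixed compact subset of $\Upsilon_{g_2}\subset\Upsilon_{g_{2,s}}$, so $g_{2,s}(\lambda')\ge c>0$ uniformly in $s$. Solving $\tilde g_{2,s}=0$ for $\chi_{s,0\bar0}$ then gives $\lvert\varphi_{s,0\bar 0}(p)\rvert\le C(1+\lvert\varphi_{s,i\bar0}\rvert^2)\le CM_s^2$.
\end{enumerate}
Equivalently, one can use that the geodesic-type identity of Lemma \ref{geodesic lemma} expresses $\ddot\varphi\, g_2$ through $\lvert\partial\dot\varphi\rvert^2 g_2'$ plus an $\epsilon^2$ term, with $g_2'(\chi_{\varphi_0})$ bounded. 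This also gives $\varphi_{s,0\bar0}\le C M_s^2$ at the boundary, using Proposition \ref{ddtestimate}.

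\textbf{Interior estimate.} Apply the maximum principle to the quantity of Hou--Ma--Wu type used in \cite{LinInverseSigmaSolvability},
\[
Q=\log\lambda_1+\phi(\lvert\nabla\varphi_s\rvert^2)+\psi(\varphi_s-\underline\varphi),
\]
perturbed to make $\lambda_1$ smooth at the maximum point. Here $\phi(r)=-\tfrac12\log(1-r/(2M_s^2))$, and $\psi$ is a convex decreasing function of the $C^0$-bounded quantity $\varphi_s-\underline\varphi$.

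Differentiate the equation $1-h_s(\chi_s)=0$ twice. The concavity of $-h_s$ on the relevant cone, from \cite{LinInverseSigmaSolvability} (the properties of $h_s$ hold uniformly in $s$ since the polynomials $\tilde f_{2,s}$ form a compact family of strictly right-Noetherian polynomials by Lemma \ref{PathRightNoeth}), controls the third-order terms. At an interior maximum with $\lambda_1>N$, inequality \eqref{Lv-estimate} yields the good term $\kappa T_s\psi'$, which absorbs the curvature and gradient terms. The result is $\lambda_1\le CM_s^2$. On $\{\lambda_1\le N\}$ there is nothing to prove.

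The constants depend on $\epsilon$, on the background data, and on the lower bound $T_s\ge C$ from (3.13) of \cite{LinInverseSigmaSolvability}, but not on $s$.

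\textbf{Main obstacle.} The hardest step is the third-order term analysis in the interior estimate: the handling of $-h_s^{i\bar j,k\bar l}$ and of the terms involving $\lambda_1$ in non-maximal directions. Here I would follow Lin's argument (Section 3 of \cite{LinInverseSigmaSolvability}) essentially verbatim, checking only that each constant is uniform in $s\in[0,1]$. Combining the boundary and interior bounds proves the proposition.
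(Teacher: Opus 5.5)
Your boundary estimate is the same as the paper's. Tangential entries come from the boundary data, mixed entries from Lemma~\ref{mixed-derivative-bound}, and $\varphi_{s,0\bar 0}$ is solved from the equation. The paper does the last step via Lemma~\ref{geodesic lemma} extended to $s\in[0,1]$; your Schur-complement version is the same computation. It is in fact exact, with no ``$\ldots$'': $\tilde g_{2,s}$ is affine in the normal--normal entry, with coefficient $g_{2,s}$ of the slice, and quadratic in the mixed entries.

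Where you differ is the interior test function. The paper uses $F_s=-A(\varphi_s-\underline\varphi)+\log(1+\lambda_1)$, which has no gradient term. In its argument, $\max\lvert\nabla\varphi_s\rvert^2$ enters only through the boundary values of $F_s$. The bad term $\sum_i(-h_s^{i\bar i})\lvert\nabla_i\lambda_1\rvert^2/\lambda_1^2$ produced by $\log\lambda_1$ must then be absorbed by the third-order terms of the inverse-$\sigma$ operator itself, following Lin's Theorem~3.1. Your Hou--Ma--Wu/Sz\'ekelyhidi function $\log\lambda_1+\phi(\lvert\nabla\varphi_s\rvert^2)+\psi(\varphi_s-\underline\varphi)$ absorbs that term through the first-order condition and the $\phi''$ term instead. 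That mechanism is more robust and does not depend on the special structure, but it only gives $\lambda_1\le CM_s^2$ in the interior. That is exactly what the proposition asserts and what the subsequent blow-up argument uses.

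One correction to your justification: do not appeal to concavity of $-h_s$. For a general right-Noetherian $f_2$, for instance when $c^{(2)}_0>0$, $h_s=-\sum_k\tilde c_k(s)\sigma_{n+1-k}(\lambda^{-1})$ is no longer a positive combination of the convex functions $\sigma_j(\lambda^{-1})$. What Lin's work supplies is convexity of the level set, i.e.\ of $\{h_s<1\}\cap\Upsilon^1_{\tilde g_{2,s}}=\Upsilon_{\tilde g_{2,s}}$. This is also why the paper passes through the Collins--Jacob--Yau rewriting before Evans--Krylov.

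Level-set convexity is enough for your third-order terms:
\begin{itemize}
\item Differentiating $h_s(\chi_s)=1$ once shows that the real directional derivatives of $\chi_s$ are tangent to the level set.
\item Hence the term $h_s^{i\bar j,k\bar l}\chi_{s,i\bar j1}\chi_{s,k\bar l\bar 1}$, which equals $\sum(-h_s^{i\bar j})\chi_{s,i\bar j1\bar1}$, is nonnegative.
\item Symmetry plus convexity of the level set gives $(f_p-f_q)(\lambda_p-\lambda_q)\le 0$ for $f=-h_s$, which handles the off-diagonal eigenvalue terms.
\end{itemize}
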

        \begin{proof}
        We first show that 
        \[
        \max_{\partial(X\times\mathcal{A})}\lvert D\bar D\varphi_s\rvert \le C \left(\max_{X\times\mathcal{A}}\lvert\nabla\varphi_s\rvert^{2}+1\right).
        \]
        The spatial direction estimates come from the boundary condition that $\varphi_s$ restricts to the given potentials $\varphi_0$ and $\varphi_1$. The mixed direction estimates come from Lemma \ref{mixed-derivative-bound}. To get an estimate of $\frac{d^{2}\varphi_s}{dt^{2}}$ on the boundary, we extend Lemma \ref{geodesic lemma} from $s=1$ to $s\in[0,1]$ to get the expression of $\frac{d^{2}\varphi_s}{dt^{2}}$ and then use the estimates in the spatial and mixed directions. By compactness, the estimate is uniform in $s$.

        For the interior estimate, we choose a large enough $A$ and consider the function
        \[
        F_s = -A(\varphi_s-\underline{\varphi}) + \log (1+\lambda_1),
        \]        
        where $\lambda_1\ge...\ge\lambda_{n+1}$ are the eigenvalues of $\omega_\epsilon^{-1}(\pi^*\chi+\sqrt{-1}D\bar D\varphi_s)$.
        If $\max_{\overline{\mathcal{X}}}F_s$ is achieved on $\partial\mathcal{X}$, then by the $C^0$-estimate and the boundary estimate, we get the required bound. If $\max_{\overline{\mathcal{X}}}F_s$ is achieved in the interior, then we must use the perturbation technique in \cite{LinInverseSigmaSolvability} since the eigenvalue is in general not differentiable. Then we follow Theorem 3.1 of \cite{LinInverseSigmaSolvability} to get the required estimate.        
        \end{proof}

        Now we use the blow-up argument to prove the gradient estimate and the complex Hessian estimate:

        \begin{prop}
        There exists a constant $C$ independent of $s$ such that 
        \[
        \max_{X\times\mathcal{A}}\lvert D\bar D\varphi_s\rvert \le C, \quad \max_{X\times\mathcal{A}}\lvert\nabla\varphi_s\rvert\le C.
        \]
        \end{prop}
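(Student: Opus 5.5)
We argue by contradiction with a Dinew--Ko{\l}odziej type blow-up, in the form used for the $\epsilon$-geodesic and $J$-equation problems. The input is the previous proposition, $\max|D\bar D\varphi_s|\le C(\max|\nabla\varphi_s|^2+1)$, together with the $C^0$ bound of Lemma~\ref{supsubsolutions}, which is uniform in $s$. Note that the gradient bound implies the Hessian bound by that proposition, so only the gradient bound needs proof.

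Suppose the gradient bound fails. Then there are $s_j\in[0,1]$ and points $p_j\in\overline{\mathcal X}$ with $M_j:=|\nabla\varphi_{s_j}|(p_j)=\max_{\overline{\mathcal X}}|\nabla\varphi_{s_j}|\to\infty$. After passing to a subsequence, $s_j\to s_\infty$ and $p_j\to p_\infty$.

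Rescale in holomorphic coordinates centred at $p_j$ by setting $\hat\varphi_j(z)=\varphi_{s_j}(p_j+z/M_j)$. On the domain, which is either $\mathbb C^{n+1}$ or a half space, we then have:
\begin{itemize}
\item $|\nabla\hat\varphi_j|\le 1$ and $|\nabla\hat\varphi_j(0)|=1$;
\item $|D\bar D\hat\varphi_j|\le C$, by the previous proposition;
\item $\hat\varphi_j$ is uniformly bounded.
\end{itemize}
The rescaled forms $\pi_X^*\chi/M_j^2$ and $\omega_\epsilon/M_j^2$ tend to $0$ and to the flat metric respectively.

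The equation $1-h_{s_j}(\chi_{s_j})=0$ is a condition on eigenvalues relative to $\omega_\epsilon$. After rescaling, the eigenvalues of $\hat\chi_j$ with respect to the rescaled metric are the original eigenvalues $\lambda$, so they remain of size at most $C$. By Arzel\`a--Ascoli, $\hat\varphi_j\to\hat\varphi$ in $C^{1,\alpha}_{loc}$, with $|\nabla\hat\varphi(0)|=1$, $\hat\varphi$ bounded, and $|D\bar D\hat\varphi|\le C$. To identify a limit equation, we use that $\lambda(\omega_\epsilon^{-1}\chi_s)\in\Upsilon^1_{\tilde g_{2,s}}\subset\Gamma^+_{n+1}$ (Lemma~\ref{positivity}, using $\tilde c_n(s)=0$). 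This gives $\sqrt{-1}D\bar D\hat\varphi_j+\pi_X^*\chi/M_j^2\ge0$, and hence $\hat\varphi$ is plurisubharmonic.

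The central step is to show that $\hat\varphi$ is maximal: a bounded maximal psh function on $\mathbb C^{n+1}$ with bounded Laplacian, which will be constant. Denote by $\mu$ the eigenvalues of $\hat\chi_j$ with respect to the flat metric. Then $\mu=\lambda$ for $\chi_{s_j}$, and these satisfy $\tilde g_{2,s_j}(\lambda)=0$ with $\lambda$ bounded. The difficulty is that the equation is not scale invariant, so $\hat\varphi$ does not satisfy the same equation in the limit. Instead we use the structure of $\partial\Upsilon_{\tilde g}$ from Lemma~\ref{rewrite-equation}. On the compact set $\partial\Upsilon_{\tilde g_{2,s}}\cap\{0\le\lambda_i\le C\}$, the function $\tilde g$ vanishes, and since $\tilde g>0$ on the interior of the positive cone near $\Upsilon$, the smallest eigenvalue is bounded above by a constant. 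This is not enough on its own.

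What we really need is degeneracy of the limit Hessian: $(\sqrt{-1}D\bar D\hat\varphi)^{n+1}=0$ in the Bedford--Taylor sense. For this we rescale more carefully. The product $\prod\lambda_i$ of the original eigenvalues is bounded, but the Monge--Amp\`ere measure of $\hat\varphi_j$ in flat coordinates is $\det(\lambda)\,dV_{flat}$, which does not tend to zero. So the plain rescaling argument fails.

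This is the main obstacle, and the alternative we would try first is to rescale the eigenvalues too. Set $\hat\varphi_j(z)=\varphi_{s_j}(p_j+z/M_j)$ (so that $D\bar D$ picks up a factor $M_j^{-2}$) and keep the bound $|D\bar D\varphi|\le CM_j^2$, so that $|D\bar D\hat\varphi_j|\le C$. Now $\sqrt{-1}D\bar D\hat\varphi_j=M_j^{-2}\chi_{s_j}$ in coordinates, and each eigenvalue $\lambda$ satisfies $\lambda/M_j^2\le C$. The equation $\tilde g_{2,s}(\lambda)=0$, homogenized by dividing by $M_j^{2(n+1)}$, becomes $\sigma_{n+1}(\lambda/M_j^2)=O(M_j^{-2})$, because the top coefficient is $1$ and the lower terms carry extra negative powers of $M_j$. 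Hence the limit satisfies $(\sqrt{-1}D\bar D\hat\varphi)^{n+1}=0$ weakly, using the continuity of the Monge--Amp\`ere operator under $C^{1,\alpha}$ convergence with bounded Laplacian.

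Near the boundary, the blow-up limit lives on a half space with constant boundary values, since the spatial derivatives of the boundary data scale to zero. In either case we obtain a bounded maximal psh function with $|D\bar D\hat\varphi|\le C$ that is constant on the boundary, if there is one. Liouville-type results (Dinew--Ko{\l}odziej for $\mathbb C^{n+1}$, and for the half space the reflection and maximum principle argument used in Chen's and Collins--Yau's geodesic papers) force $\hat\varphi$ to be constant. This contradicts $|\nabla\hat\varphi(0)|=1$ and proves the gradient bound, and with it the Hessian bound via the previous proposition.
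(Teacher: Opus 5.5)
\textbf{Gap in the boundary case.} Your interior case is fine, though heavier than needed. The boundary case, however, rests on a Liouville statement that is false. You claim that a bounded maximal psh function on a half space, with bounded complex Hessian and constant boundary values, must be constant. A counterexample is
\[
u(w_0,w')=e^{-\operatorname{Re}w_0}-1 \quad\text{on } \{\operatorname{Re}w_0\ge 0\}\subset\mathbb{C}^{n+1},\ n\ge 1 .
\]
It has all the stated properties:
\begin{itemize}
\item It is bounded and psh.
\item Its complex Hessian $\frac{1}{4}e^{-\operatorname{Re}w_0}\sqrt{-1}dw_0\wedge d\bar w_0$ is bounded and has rank one, so $(\sqrt{-1}D\bar D u)^{n+1}=0$.
\item It vanishes on the boundary, and $|\nabla u|\le 1$ with equality at the origin.
\item It is independent of $\operatorname{Im}w_0$, so it is even compatible with $S^1$-invariance.
\end{itemize}
Odd reflection preserves neither plurisubharmonicity nor maximality, so no reflection argument can exclude it.

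\textbf{The missing input.} What rules out such limits is information you list but never use: the barriers. By Lemma~\ref{supsubsolutions}, $\underline\varphi\le\varphi_s\le\overline\varphi$ uniformly in $s$, and $\underline\varphi=\overline\varphi$ on $\partial\mathcal X$. Hence $\underline\varphi(p_j+z/M_j)\le\hat\varphi_j(z)\le\overline\varphi(p_j+z/M_j)$, and the limit satisfies $\underline\varphi(p_\infty)\le\hat\varphi\le\overline\varphi(p_\infty)$. When $p_\infty\in\partial\mathcal X$ these two constants coincide, so $\hat\varphi$ is constant. This holds whether the limit domain is a half space or all of $\mathbb{C}^{n+1}$, and it is exactly how the paper handles the boundary case.

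An alternative repair uses the $s$-uniform bound on $|\partial_t\varphi_s|$ from Proposition~\ref{ddtestimate} together with $S^1$-invariance. These give $|\partial_{w_0}\hat\varphi_j|\le C/M_j\to 0$, so the limit is independent of $w_0$. Being constant on the boundary hyperplane, it is then constant everywhere.

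\textbf{The interior case.} Once you know $\hat\varphi$ is a bounded psh function on $\mathbb{C}^{n+1}$, you are done. Its restriction to every complex line is a bounded subharmonic function on $\mathbb{C}$, hence constant. This is the paper's argument, so the maximality step and the appeal to Dinew--Ko{\l}odziej are superfluous.

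Your first attempt at maximality is also wrong. Relative to the rescaled, asymptotically flat metric, the eigenvalues of the rescaled form are $\lambda/M_j^2$, not $\lambda$. Moreover, $\lambda$ itself is only bounded by $CM_j^2$, not by $C$. Your second computation, giving $\sigma_{n+1}(\lambda/M_j^2)\to 0$, is correct but unnecessary.
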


        \begin{proof}
       Fix $\epsilon$. Assume, to the contrary, that there are $s_i\in[0,1]$ and points $x_{i}\in X\times\overline{\mathcal{A}}$ such that $c_{i}:=\sup\lvert\nabla\varphi_{s_i}\rvert=\lvert\nabla\varphi_{s_i}(x_i)\rvert\rightarrow\infty$. Since $X\times \bar{\mathcal{A}}$ and $[0,1]$ are compact, after passing to a subsequence, we may assume that $x_{i}\rightarrow x_{\infty}$ and $s_i\rightarrow s_\infty$. Let $U_i$ be a ball of radius $\delta>0$ with respect to $\omega_{\epsilon}$ around $x_i$, and let $\widehat U_i=c_i(U_i-x_i)$. We define $\hat{\varphi}_{i}(z)=\varphi_{s_i}(x_{i}+\frac{z}{c_{i}})$. Then $\lvert\nabla\hat{\varphi}_{i}\rvert\le 1$ for all $z\in\widehat U_{i}$ and $\lvert\nabla\hat{\varphi}_{i}(0)\rvert=1$. Then we have $\lvert\Delta \hat{\varphi}_{i}\rvert\le C$. We have $\underline{\varphi}\le \varphi_{s_i}\le \overline{\varphi}$ for all $i$. Rescale $\underline{\varphi}$ and $\overline{\varphi}$ accordingly by setting $\hat{\underline{\varphi}}(z)=\underline{\varphi}(x_{i}+\frac{z}{c_{i}})$ and $\hat{\overline{\varphi}}(z)=\overline{\varphi}(x_{i}+\frac{z}{c_{i}})$. Then $\hat{\underline{\varphi}}\le \hat{\varphi}_{i}\le \hat{\overline{\varphi}}$ for all $i$. By the standard elliptic estimate, we have $\hat{\varphi}_{i}\rightarrow \varphi_{\infty}$ in $C^{1,\eta_0}$ for some $\eta_0>0$ on every compact subset of the limit space $\mathbb{C}^{n+1}$ or half-space $\mathbb{C}^{n+1}\cap\{\Re z_0 \ge 0\}$. This implies
        \[\lvert\nabla\varphi_{\infty}(0)\rvert=1\]
        and 
        \[\underline{\varphi}(x_\infty)\le \varphi_{\infty}(x)\le \overline{\varphi}(x_\infty)\]
        for all $x$. 
        
        If $x_\infty\in \partial(X\times \mathcal{A})$, then $\underline{\varphi}(x_\infty)=\overline{\varphi}(x_\infty)$ because they both equal the boundary function $\varphi_0$ or $\varphi_1$. Thus, $\varphi_{\infty}$ is constant, a contradiction.
        
        If $x_\infty\not\in\partial(X\times \mathcal{A})$, then we can apply the arguments in Section 5 of \cite{Collins201511FW}. We have the K\"ahler form
        \[\chi_{s_i}=\pi_{X}^{*}\chi+\sqrt{-1}D\bar{D}\varphi_{s_i}>0.\] After rescaling and passing to the limit, this gives $\sqrt{-1}D\bar{D}\varphi_{\infty}\ge 0$ in the sense of distribution. Since $\varphi_\infty$ is a bounded plurisubharmonic function on $\mathbb{C}^{n+1}$, it is a constant, again a contradiction.
        \end{proof}

\section{Existence of the Weak Geodesic}
 
 We still fix $\epsilon$. By Section 6 of \cite{Collins201511FW}, Lin's convexity of the graph of complex Hessians \cite{LinInverseSigmaConvexity} can be rewritten as a concave and uniformly elliptic operator on real Hessians \cite{Collins201511FW}. Then the Evans-Krylov theory in \cite{EvansClassicalFullyNonlinear} and \cite{KrylovBoundaryHolder} implies the $C^{2,\alpha}$ bound. Then the standard Schauder estimate implies a $C^{k,\alpha}$ bound for all $k$. Now we can prove the closedness of $P$ defined in Section \ref{SecContinuityPath} and obtain that $P = [0,1]$.
Then the smooth $\epsilon$-geodesic always exists.

 The uniqueness is standard because the space $\Upsilon^1_{\tilde g}$ is convex and the $\epsilon$-geodesic equation is an elliptic equation on $\Upsilon^1_{\tilde g}$. For example, see Corollary 17.2 in \cite{Giltru} for the proof. This also implies that the solution is $S^1$-invariant.

Now we prove the monotonicity of $\epsilon$-geodesics when $\epsilon\to 0$.
Recall that 
\[
\omega_{\epsilon} = \pi_{X}^*\omega + \epsilon^2\sqrt{-1} dz_0 \wedge d\bar z_0,
\]
and the $\epsilon$-geodesic $\chi_\epsilon=\pi_{X}^{*}\chi+\sqrt{-1}D\bar{D}\varphi_\epsilon$ is K\"ahler. By the Courant-Fischer-Weyl min-max principle
\[
  \lambda_i(A^{-1}B)
  =\max_{\substack{V\subseteq\mathbb{C}^{n+1} \\ \dim V=i}}
    \min_{0\neq x\in V}\frac{x^*Bx}{x^*Ax},
\]
and the condition that the $\Upsilon$-cone pass the PRT, we see that for any $0<\epsilon_1\le\epsilon_2$, the $\epsilon$-geodesic equation $\lambda(\omega_{\epsilon_2}^{-1}\chi_{\epsilon_2})\in \partial\Upsilon_{\tilde{g}}\cap\Upsilon^1_{\tilde{g}}$ implies that $\lambda(\omega_{\epsilon_1}^{-1}\chi_{\epsilon_2})\in \overline\Upsilon_{\tilde{g}}$. The maximum principle and the boundary condition then imply the comparison $\varphi_{\epsilon_1}\ge \varphi_{\epsilon_2}$. This finishes the proof of Theorem \ref{MainThm1}.

The monotonicity of the $\epsilon$-geodesic and the uniform $C^0$ estimate implied by the boundary condition and Proposition \ref{ddtestimate} give an $S^1$-invariant function
\[
\varphi
=\left(\lim_{\epsilon\to 0}\varphi_{\epsilon}\right)^*
=\left(\sup_{\epsilon>0}\varphi_\epsilon\right)^* \in \operatorname{PSH}(X\times\mathcal A,\pi_X^*\chi)
\cap L^\infty(X\times\mathcal A),
\]
where $^*$ denotes upper-semicontinuous regularization, and we write
\[
\varphi_t=\varphi|_{X\times\{|z_0|=e^t\}}\in \operatorname{PSH}(X,\chi)\cap L^\infty(X)
\]
for all $t\in[0,1]$. 

By taking the limit of the uniform estimates in Proposition \ref{ddtestimate}, we see that
\[
\|\varphi_t-\varphi_s\|_{L^\infty(X)}\le C|t-s|.
\]
As a corollary, we provide a different characterization of $\varphi_t$. We can also first restrict $\varphi_\epsilon$ to a slice of $X\times \mathcal{A}$ to obtain $\varphi_{\epsilon,t}(x)=\varphi_{\epsilon}(x,t)$, then take the upper-semicontinuous regularization of its pointwise limit \[\hat\varphi_{t}=\left(\lim_{\epsilon\to 0}\varphi_{\epsilon, t} \right)^*\in \operatorname{PSH}(X,\chi)\cap L^\infty(X).\] Fix $(x_{0},t_{0})\in X\times \mathcal{A}$. Using Proposition \ref{ddtestimate} we have $$\varphi_\epsilon(x,t)\le \varphi_\epsilon(x,t_{0})+C|t-t_{0}|.$$ Therefore \[
\varphi_{t_{0}}(x_{0}) =\varphi(x_{0},t_{0}) =\limsup_{(x,t)\to(x_0,t_0)}\lim_{\epsilon \rightarrow 0}\varphi_\epsilon(x,t) \le \limsup_{x\rightarrow x_{0}}\lim_{\epsilon \rightarrow 0} \varphi_\epsilon(x,t_{0}) = \hat \varphi_{t_{0}}(x_{0}).
\] 
For the reverse inequality $$\hat \varphi_{t_{0}}(x_{0})=\limsup_{x\rightarrow x_{0}}\lim_{\epsilon \rightarrow 0}\varphi_\epsilon(x,t_{0}) \le \limsup_{(x,t)\rightarrow (x_{0},t_{0})}\lim_{\epsilon \rightarrow 0}\varphi_\epsilon(x,t)=\varphi(x_{0},t_{0}).$$ This proves that  $\varphi_{t}=\hat\varphi_{t}$. 

All the functions $\varphi_\epsilon$ and $\varphi_{\epsilon,t}$ are uniformly bounded, monotone $\pi_{X}^{*}\chi$-psh or $\chi$-psh functions. So we can use the Bedford-Taylor theory \cite{BedfordTaylorCapacity} to study them. We rewrite the $\epsilon$-geodesic equation as \[
\sum_{k=1}^{n+1}\tilde{c}_{k}\binom{n+1}{k}(\pi_{X}^{*}\chi+\sqrt{-1}D\bar{D}\varphi_\epsilon)^{k}\wedge(\pi_{X}^{*}\omega)^{n+1-k}=-(n+1)\epsilon^{2}\sqrt{-1}dz_0\wedge d\bar z_0\wedge\tilde g_2(\chi_{\varphi_\epsilon}),
\]  
where \[\tilde g_2(\chi_{\varphi_\epsilon})=\sum_{k=0}^{n}\tilde{c}_{k}\binom{n}{k}\big(\pi_{X}^{*}\chi+\sqrt{-1}\partial\bar{\partial}\varphi_{\epsilon}\big)^{k}\wedge(\pi_{X}^{*}\omega)^{n-k}.\]

Observe that the term $-\epsilon^{2}\sqrt{-1}dz_{0}\wedge d\bar{z_{0}}\wedge \tilde g_2(\chi_{\varphi_\epsilon})$ is non-negative by Proposition \ref{extension}. Consider any continuous test function $\eta$ on $\overline{\mathcal{X}}$. Then we have $$|\int_{X\times \bar{\mathcal{A}}}\eta(-\epsilon^{2}\sqrt{-1}dz_{0}\wedge d\bar{z_{0}}\wedge \tilde g_2(\chi_{\varphi_\epsilon}))|\le C\epsilon^{2}\|\eta\|_{C^0(X\times\bar{\mathcal{A}})},$$ where $C$ is the cohomological constant $-\int_{X\times \bar{\mathcal{A}}}\sqrt{-1}dz_{0}\wedge d\bar{z_{0}}\wedge \tilde g_2(\chi_{\varphi_\epsilon})$. Now taking $\epsilon\rightarrow 0$ we get that $-\epsilon^{2}\sqrt{-1}dz_{0}\wedge d\bar{z_{0}}\wedge \tilde g_2(\chi_{\varphi_\epsilon})$ tends weakly to zero as measures. This gives $$\sum_{k=1}^{n+1}\tilde{c}_{k}\binom{n+1}{k}(\pi_{X}^{*}\chi+\sqrt{-1}D\bar{D}\varphi)^{k}\wedge(\pi_{X}^{*}\omega)^{n+1-k}=0.$$
So $\varphi$ is a weak geodesic satisfying the geodesic equation in the Bedford-Taylor sense.

  \section{The Functional and the Convexity Condition}
        
       For $i=1, 2$, we define the functional $I_i: C^{\infty}(X,\mathbb{R})\rightarrow \mathbb{R}$ by
        \begin{equation}
        \label{def-I}
            I_i(\varphi) = \int_{X}\varphi\sum_{k=0}^{n}\binom{n}{k}c_k^{(i)}\sum_{j=0}^{k}\frac{1}{j+1}\binom{k}{j}\chi^{k-j}\wedge (\sqrt{-1}\partial\bar{\partial}\varphi)^{j}\wedge\omega^{n-k},
        \end{equation}
        and define $J_i=-I_i$.

        Let $\varphi(t)$ be any smooth path. We compute the derivative and see that
         \begin{equation}
         \begin{split}
            \frac{d}{dt}I_i(\varphi) &= \int_{X}\frac{d\varphi}{dt}\sum_{k=0}^{n}\binom{n}{k}c_k^{(i)}\sum_{j=0}^{k}\frac{1}{j+1}\binom{k}{j}\chi^{k-j}\wedge (\sqrt{-1}\partial\bar{\partial}\varphi)^{j}\wedge\omega^{n-k}\\
            &+\int_{X}\varphi\sum_{k=0}^{n}\binom{n}{k}c_k^{(i)}\sum_{j=1}^{k}\frac{j}{j+1}\binom{k}{j}\chi^{k-j}\wedge (\sqrt{-1}\partial\bar{\partial}\varphi)^{j-1}\wedge\omega^{n-k}\wedge\sqrt{-1}\partial\bar{\partial}\frac{d\varphi}{dt}\\
            &=\int_{X}\frac{d\varphi}{dt}\sum_{k=0}^{n}\binom{n}{k}c_k^{(i)}\sum_{j=0}^{k}(\frac{1}{j+1}+\frac{j}{j+1})\binom{k}{j}\chi^{k-j}\wedge (\sqrt{-1}\partial\bar{\partial}\varphi)^{j}\wedge\omega^{n-k}\\
            &=\int_{X}\frac{d\varphi}{dt}\sum_{k=0}^{n}\binom{n}{k}c_k^{(i)}\chi_\varphi^{k}\wedge\omega^{n-k}\\
            &=\int_{X}\frac{d\varphi}{dt}g_i(\chi_\varphi).
        \end{split}
        \label{ddtI}
        \end{equation}

        See \cite{ChenTianRicciFlow} and \cite{FangLaiMaFlows} for more discussions about these functionals. In Xiuxiong Chen's case \cite{ChenMabuchiEnergy}, $f_1(x)=x^n-cx^{n-1}$ and $f_2(x)=x^n$. Then the $J_1$ functional is the $J$-functional, and $I_2$ functional is the $I$-functional. In Collins-Yau's case \cite{CollinsYauGeodesics},
        \[
        f_1(x)=\Im(e^{-\sqrt{-1}\hat\theta}(1+\sqrt{-1}x)^n), \quad f_2(x)=\Re(e^{-\sqrt{-1}\hat\theta}(1+\sqrt{-1}x)^n).
        \]
        Then the $J_1$ functional is the $\mathcal{J}$-functional, and the $I_2$ functional is the $\mathcal{C}$-functional in their paper.
        Now suppose that $\varphi_0$ and $\varphi_1$ are smooth potentials in $\mathcal{H}$, and $\varphi_{\epsilon,t}$ is the smooth $\epsilon$-geodesic connecting them. Taking one more derivative of \eqref{ddtI}, we see that
        \[
            \frac{d^2I_i(\varphi_{\epsilon,t})}{dt^2}
            =\int_X\frac{d^2\varphi_{\epsilon,t}}{dt^2}g_i(\chi_{\varphi_{\epsilon,t}})
            +\int_X\frac{d\varphi_{\epsilon,t}}{dt}\sqrt{-1}\partial\bar{\partial}\frac{d\varphi_{\epsilon,t}}{dt}\wedge g_i'(\chi_{\varphi_{\epsilon,t}}),
        \]
        where
        \[
            g_i'(\chi_{\varphi_{\epsilon,t}})=\sum_{k=1}^n kc_k^{(i)}\binom{n}{k}\chi_{\varphi_{\epsilon,t}}^{k-1}\wedge\omega^{n-k}.
        \]
        Integrating by parts, we obtain
        \[
            \frac{d^2I_i(\varphi_{\epsilon,t})}{dt^2}
            =\int_X\frac{d^2\varphi_{\epsilon,t}}{dt^2}g_i(\chi_{\varphi_{\epsilon,t}})
            -\int_X\sqrt{-1}\partial\frac{d\varphi_{\epsilon,t}}{dt}\wedge\bar{\partial}\frac{d\varphi_{\epsilon,t}}{dt}\wedge g_i'(\chi_{\varphi_{\epsilon,t}}).
        \]

        From the $\epsilon$-geodesic equation in Lemma \ref{geodesic lemma}, we have
        \[
            \frac{d^2I_2(\varphi_{\epsilon,t})}{dt^2}=\int_X -4\epsilon^2e^{2t}\left(\sum_{k=0}^{n}\tilde{c}_{k}\binom{n}{k}\big(\pi_{X}^{*}\chi+\sqrt{-1}\partial\bar{\partial}\varphi_{\epsilon,t}\big)^{k}\wedge(\pi_{X}^{*}\omega)^{n-k}\right)=C\epsilon^2e^{2t},
        \]
        and
        \begin{equation}
        \begin{split}
            -\frac{d^2J_1(\varphi_{\epsilon,t})}{dt^2}
            &=\frac{d^2I_1(\varphi_{\epsilon,t})}{dt^2}\\
            &=\int_X\sqrt{-1}\partial\frac{d\varphi_{\epsilon,t}}{dt}\wedge\bar{\partial}\frac{d\varphi_{\epsilon,t}}{dt}\wedge
            \left(\frac{g_1(\chi_{\varphi_{\epsilon,t}})}{g_2(\chi_{\varphi_{\epsilon,t}})}g_2'(\chi_{\varphi_{\epsilon,t}})-g_1'(\chi_{\varphi_{\epsilon,t}})\right)\\
            &+\int_X -4\epsilon^2e^{2t}\left(\sum_{k=0}^{n}\tilde{c}_{k}\binom{n}{k}\big(\pi_{X}^{*}\chi+\sqrt{-1}\partial\bar{\partial}\varphi_{\epsilon,t}\big)^{k}\wedge(\pi_{X}^{*}\omega)^{n-k}\right)\frac{g_1(\chi_{\varphi_{\epsilon,t}})}{g_2(\chi_{\varphi_{\epsilon,t}})}.
            \label{ConvexEq3}
        \end{split}
        \end{equation}

        Here the quotient of two top-degree forms denotes the quotient of their scalar densities with respect to any fixed positive local volume form. 
        
            We first view
            \[
            \frac{g_1(\chi_\varphi)}{g_2(\chi_\varphi)}g_2'(\chi_\varphi) - g_1'(\chi_\varphi) \]
            as an $(n-1,n-1)$-form and determine its sign.
           \begin{align*}
                g_{i}'(\chi_\varphi)=&\sum_{k=1}^{n}kc_k^{(i)}\binom{n}{k}\chi_{\varphi}^{k-1}\wedge \omega^{n-k} \\
                =& \sum_{j=1}^{n} \sum_{k=1}^{n}kc_k^{(i)}\binom{n}{k}(k-1)!(n-k)! \sigma_{k-1}(\lambda_{;j}) \prod_{l\not=j}\sqrt{-1}{dz^{l}\wedge d\bar{z}^{l}} \\
                =& n!\sum_{j=1}^{n} \sum_{k=1}^{n}c_k^{(i)}\sigma_{k-1}(\lambda_{;j}) \prod_{l\not=j}\sqrt{-1}{dz^{l}\wedge d\bar{z}^{l}}
            \end{align*}

            Thus, the nonpositivity of this form is equivalent to the $n$ inequalities
            \begin{equation}
            g_1(\lambda)\partial_{\lambda_{j}}g_2(\lambda) -g_2(\lambda)\partial_{\lambda_{j}}g_1(\lambda) \le 0,\qquad 1\leq j\leq n.
            \label{derivative-monotonicity}
            \end{equation}

            This is exactly the third condition in Theorem~\ref{PositiveDerivative} because of $f_2\prec  f_1$.

        For the other term, since 
        $\partial_{\lambda_{j}} \frac{g_1(\lambda)}{g_2(\lambda)}\ge 0$ for all $\lambda \in \Upsilon_{g_2}$ by \eqref{derivative-monotonicity}, we see that $\frac{g_1(\lambda)}{g_2(\lambda)}$ is bounded above by its limit when all $\lambda_i\to \infty$. The limit is $1$ because $f_1$ and $f_2$ are monic polynomials with the same degree, so $\frac{g_1(\lambda)}{g_2(\lambda)}\le 1$ on $\Upsilon_{g_2}$.

        We substitute these estimates in \eqref{ConvexEq3} and see that
        \[
        -\frac{d^2J_1(\varphi_{\epsilon,t})}{dt^2}
        =\frac{d^2I_1(\varphi_{\epsilon,t})}{dt^2}\le C\epsilon^2e^{2t}.
        \]

        Now we start to study the weak geodesic $\varphi_t$. We can use the Bedford-Taylor products \cite{BedfordTaylorCapacity} in \eqref{def-I} to define $I_i(\varphi_t)$ and $J_i(\varphi_t)=-I_i(\varphi_t)$.
        Theorem 2.6 of \cite{BedfordTaylorCapacity} implies
\[
I_i(\varphi_{\epsilon,t})\longrightarrow I_i(\varphi_t),
\qquad
J_i(\varphi_{\epsilon,t})\longrightarrow J_i(\varphi_t), \qquad \epsilon\to 0.
\]
Now we claim that by \eqref{ddtI},
\begin{equation}
\label{Lipschitz}
|I_i(u)-I_i(v)|\le C\|u-v\|_{L^\infty(X)}
\end{equation}
for all $u, v\in \mathcal{H}$. To see this, suppose $u,v\in \mathcal{H}$ and we know that $\mathcal{H}$ is convex. Consider  the  path  $u_{t}=(1-t)v+tu$. We see that $\frac{du_{t}}{dt}=u-v$. So 
\[
    |I_i(u)-I_i(v)|\le \int_{0}^{1}|\frac{d}{dt}I_{i}(u_{t})|dt\le \int_{0}^{1}\int_{X}|\frac{du_{t}}{dt}||g_{i}(\chi_{u_t})|dt \le C||u-v||_{L^{\infty}(X)}.
\]
Here $|g_{i}(\chi_{u_t})|$ means the coefficients are $|c_{k}^{(i)}|$.
By taking the limit of \eqref{Lipschitz} and Proposition \ref{ddtestimate}, we see that
$I_i(\varphi_t)$ and $J_i(\varphi_t)$ are Lipschitz on $[0,1]$. We can also take the limit to finish the proof of Theorem \ref{MainThm3}. To see this, we recall $$\frac{d^{2}I_{2}(\varphi_{\epsilon,t})}{dt^{2}}=C\epsilon^{2}e^{2t}.$$ Now integrating twice, we obtain $$I_{2}(\varphi_{\epsilon,t})=C\epsilon^{2}\frac{e^{2t}}{4}+A_\epsilon t+B_\epsilon.$$ Taking $\epsilon\rightarrow 0$, we obtain $I_{2}(\varphi_{t})=At+B$. This gives $$I_{2}(\varphi_{t})=(1-t)I_{2}(\varphi_{0})+tI_{2}(\varphi_{1})$$ for all $t\in [0,1]$. Now let us take $0\le t_{1}<t_{2}<t_{3}\le 1$ and define $$L(t)=
\begin{cases}
\dfrac{t-t_1}{t_2-t_1}, & t_1\leq t\leq t_2, \\[6pt]
\dfrac{t_3-t}{t_3-t_2}, & t_2\leq t\leq t_3.
\end{cases}$$ We see that $L(t_{1})=L(t_{3})=0$ and $L(t_{2})=1$. First let us consider the function $$K:= \frac{J_{1}(\varphi_{\epsilon,t_{2}})-J_{1}(\varphi_{\epsilon,t_{1}})}{t_{2}-t_{1}}-\frac{J_{1}(\varphi_{\epsilon,t_{3}})-J_{1}(\varphi_{\epsilon,t_{2}})}{t_{3}-t_{2}}.$$ By the fundamental theorem of calculus, $$K=\int_{t_{1}}^{t_{3}}L'(t)\frac{dJ_{1}(\varphi_{\epsilon,t})}{dt}dt.$$ Now integrating by parts we obtain $$K=[L(t)\frac{dJ_{1}(\varphi_{\epsilon,t})}{dt}]|_{t_{1}}^{t_{3}}-\int_{t_{1}}^{t_{3}}L(t)\frac{d^{2}J_{1}(\varphi_{\epsilon,t})}{dt^{2}}dt=-\int_{t_{1}}^{t_{3}}L(t)\frac{d^{2}J_{1}(\varphi_{\epsilon,t})}{dt^{2}}dt$$ since $L(t_{1})=L(t_{3})=0$. Because $L(t)\ge 0$ on $[t_{1},t_{3}]$ and $\frac{d^{2}J_{1}(\varphi_{\epsilon,t})}{dt^{2}}\ge -C\epsilon^{2}e^{2t},$ we obtain $$K\le C\epsilon^{2}\int_{t_{1}}^{t_{3}}L(t)e^{2t}dt.$$ Now integrating and taking $\epsilon\rightarrow 0$, we obtain $$\frac{J_{1}(\varphi_{t_{2}})-J_{1}(\varphi_{t_{1}})}{t_{2}-t_{1}}-\frac{J_{1}(\varphi_{t_{3}})-J_{1}(\varphi_{t_{2}})}{t_{3}-t_{2}}\le 0.$$ Rearranging we obtain $$J_{1}(\varphi_{t_{2}})\le \frac{t_{3}-t_{2}}{t_{3}-t_{1}}J_{1}(\varphi_{t_{1}})+\frac{t_{2}-t_{1}}{t_{3}-t_{1}}J_{1}(\varphi_{t_{3}}).$$
In other words, the $I_2$-functional is affine and the $J_1$-functional is convex on weak geodesics.

\section{\texorpdfstring{$\mathcal{E}^{p}$}{Ep} Theory}
    	 
	            Recall that $c_n^{(i)}=1$, and $c_{n-1}^{(2)}=0$. We define
	   \[
	   f_i(x) = \sum_{k=0}^{n} c_k^{(i)} \binom{n}{k} x^k,
	   \quad g_i(\lambda) = \sum_{k=0}^{n} c_k^{(i)} \sigma_k(\lambda),
	   \]
    and
	   \[
		   g_i(\chi_\varphi) = \sum_{k=0}^{n} c_k^{(i)}\binom{n}{k} \chi_\varphi^k \wedge \omega^{n-k},\quad i=1,2.
		   \]
	   We have defined the $\epsilon$-geodesic equation
\[
\sum_{k=0}^{n+1} \tilde{c}_k \binom{n+1}{k} (\pi^*_{X}\chi + \sqrt{-1}D\bar{D}\varphi_\epsilon )^{k}\wedge (\pi_{X}^*\omega + \epsilon^{2}\sqrt{-1}dz_0\wedge  d\bar z_0)^{n+1-k} =0
\]
on $\mathcal{X}:=X\times \mathcal{A}$ where $\tilde c_k=c_{k-1}^{(2)}$ for $1\le k\le n+1$, and $\tilde c_0$ is chosen by Lemma \ref{IntegralUpsilonStable}.
We define \[\tilde g_2(\chi_{\varphi_\epsilon})=\sum_{k=0}^{n}\tilde{c}_{k}\binom{n}{k}\big(\pi_{X}^{*}\chi+\sqrt{-1}\partial\bar{\partial}\varphi_{\epsilon}\big)^{k}\wedge(\pi_{X}^{*}\omega)^{n-k}.\]
Then, by Proposition \ref{extension}, $\tilde g_2(\chi_\varphi)\le 0$.
	   
Define \[
\mathcal{H}=\{\varphi\in C^{\infty}(X,\mathbb{R}): \chi_\varphi=\chi+\sqrt{-1}\partial\bar\partial\varphi\in \Upsilon_{g_2}(\omega)\}.
\] Let $\varphi(t)$, $t\in [0,1]$, be a piecewise smooth path in $\mathcal{H}$. For $p\in[1,\infty)$ and $\xi\in T_\varphi\mathcal H$, define
\[ \|\xi\|_{p,\varphi}^p = \int_X |\xi|^p g_2(\chi_\varphi).\]
For a path $\varphi(t)$, set
\[E_p(\varphi,t)=\int_X\left|\frac{d\varphi}{dt}\right|^p g_2(\chi_\varphi),\qquad
\operatorname{length}_{p}(\varphi)=\int_{0}^{1}E_{p}(\varphi,t)^{\frac{1}{p}}dt.
\]
When $p=2$, this norm is induced by an inner product, and its geodesics satisfy the geodesic equation above.

The $d_p$ distance is defined by
\[
d_p(\varphi_0,\varphi_1) = \inf_{\substack{\varphi_t \in \mathcal{H}\\ \varphi_t|_{t=0}=\varphi_0,\,\varphi_t|_{t=1}=\varphi_1}}
\int_0^1 \left\|\frac{d\varphi_t}{dt}\right\|_{p,\varphi_t}dt.
\]
 To regularize the absolute-value power near zero, for  $0<\delta\le 1$, we introduce
 \[E_{p,\delta}(\varphi,t)=\int_{X}\sqrt{\left|\frac{d\varphi_{t}}{dt}\right|^{2p}+\delta^{2}}\, g_{2}({\chi_\varphi}), \quad \operatorname{length}_{p,\delta}(\varphi)=\int_{0}^{1}E_{p,\delta}(\varphi,t)^{\frac{1}{p}}dt.\]
 Then for any smooth path $\varphi(t)$ in $\mathcal{H}$, there exists $C(p,\chi,\omega,X)$ such that 
 \begin{equation}
 \label{approximate inequality}\lvert E_{p}(\varphi,t)-E_{p,\delta}(\varphi,t)\rvert\le C\delta, \quad \lvert \operatorname{length}_{p}(\varphi)-\operatorname{length}_{p,\delta}(\varphi)\rvert\le C\delta^{\frac{1}{p}}.
\end{equation}
 \begin{lem}
 \label{estimate lemma}
 	For any $\varphi_{0},\varphi_{1}\in \mathcal{H}$, let $\varphi_{\epsilon}$ be the $\epsilon$-geodesic connecting $\varphi_{0}$ and $\varphi_{1}$. Denote the corresponding energies by $E_{p,\delta}^{\epsilon}$. Then there exists $C(\varphi_{0},\varphi_{1},p,\chi,\omega,X)$ such that, for any $0<\delta\le 1$,\\
 	$(i)$ $ \lvert\partial_{t}E_{p,\delta}^{\epsilon}\rvert\le C\epsilon^{2}$.\\
 	$(ii)$ The following inequality holds 
 	\[E_{p,\delta}^{\epsilon}(t)\ge \max\left\{{\int_{\varphi_{0}>\varphi_{1}}\lvert \varphi_{0}-\varphi_{1}\rvert^{p}g_{2}(\chi_{\varphi_{0}})},{\int_{\varphi_{1}>\varphi_{0}}\lvert \varphi_{0}-\varphi_{1}\rvert^{p}g_{2}(\chi_{\varphi_{1}})}\right\}-C\epsilon^{2}\]
 \end{lem}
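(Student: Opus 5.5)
Writing $\dot\varphi=\frac{d\varphi_{\epsilon,t}}{dt}$ and $F_\delta(x)=\sqrt{x^{2p}+\delta^2}$, the plan follows Chen's argument for the classical case (and Darvas's $L^p$ version). For (i), I would differentiate $E^\epsilon_{p,\delta}(t)=\int_X F_\delta(\dot\varphi)\,g_2(\chi_{\varphi_{\epsilon,t}})$ in $t$:
\[
\partial_t E^\epsilon_{p,\delta}=\int_X F_\delta'(\dot\varphi)\,\ddot\varphi\, g_2(\chi_\varphi)+\int_X F_\delta(\dot\varphi)\,\sqrt{-1}\partial\bar\partial\dot\varphi\wedge g_2'(\chi_\varphi).
\]
Next I integrate the second term by parts to obtain $-\int_X F_\delta''(\dot\varphi)\ldots$; more precisely it becomes $-\int_X F_\delta'(\dot\varphi)\sqrt{-1}\partial\dot\varphi\wedge\bar\partial\dot\varphi\wedge g_2'(\chi_\varphi)$. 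The two terms then combine into
\[
\partial_t E^\epsilon_{p,\delta}=\int_X F_\delta'(\dot\varphi)\Big(\ddot\varphi\,g_2(\chi_\varphi)-\sqrt{-1}\partial\dot\varphi\wedge\bar\partial\dot\varphi\wedge g_2'(\chi_\varphi)\Big).
\]
By Lemma \ref{geodesic lemma}, the bracket equals $-4\epsilon^2e^{2t}\tilde g_2(\chi_\varphi)$, which is a nonnegative $(n,n)$-form by Proposition \ref{extension}. Since $|F_\delta'(x)|=p|x|^{2p-1}/\sqrt{x^{2p}+\delta^2}\le p|x|^{p-1}$, Proposition \ref{ddtestimate} bounds $|F'_\delta(\dot\varphi)|$ by a constant independent of $\epsilon$ and $\delta$ (we take $\epsilon\le1$). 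The integral $-\int_X\tilde g_2(\chi_\varphi)$ is cohomological, hence a fixed constant. Together these give $|\partial_tE^\epsilon_{p,\delta}|\le C\epsilon^2$. The only delicate points are regularity for integration by parts, which holds because $\varphi_\epsilon$ is smooth, and the fact that $F_\delta$ is $C^1$ (indeed smooth) thanks to $\delta>0$. This is exactly why the $\delta$-regularization is introduced.

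For (ii), I use (i) to get $E^\epsilon_{p,\delta}(t)\ge E^\epsilon_{p,\delta}(0)-C\epsilon^2$ and $E^\epsilon_{p,\delta}(t)\ge E^\epsilon_{p,\delta}(1)-C\epsilon^2$, which reduces the problem to the endpoints. At $t=0$ we have $g_2(\chi_{\varphi_0})>0$ and $F_\delta(x)\ge|x|^p$, so $E^\epsilon_{p,\delta}(0)\ge\int_{\{\varphi_0>\varphi_1\}}|\dot\varphi_\epsilon(0)|^p g_2(\chi_{\varphi_0})$. The key pointwise input is convexity in $t$: since $\ddot\varphi_\epsilon\ge0$ (Kähler property, as in Lemma \ref{supsubsolutions}), we have $\dot\varphi_\epsilon(0)\le\varphi_1-\varphi_0\le\dot\varphi_\epsilon(1)$. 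On $\{\varphi_0>\varphi_1\}$ this gives $\dot\varphi_\epsilon(0)\le\varphi_1-\varphi_0<0$, hence $|\dot\varphi_\epsilon(0)|\ge\varphi_0-\varphi_1$. Symmetrically, on $\{\varphi_1>\varphi_0\}$ we get $\dot\varphi_\epsilon(1)\ge\varphi_1-\varphi_0>0$, and the same estimate at $t=1$ against $g_2(\chi_{\varphi_1})$ yields the second entry of the max.

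I expect the main obstacle to be making the constant in (i) genuinely uniform in $\delta$ and $\epsilon$. This rests on the $\epsilon$-independent bound on $\dot\varphi$ from Proposition \ref{ddtestimate}, where the $C\epsilon^2$ term is harmless for $\epsilon\le1$. It also rests on fixing the sign of $\tilde g_2(\chi_\varphi)$ so that its integral, rather than its absolute value, controls the error; that sign is supplied by Proposition \ref{extension}. Everything else is routine bookkeeping.
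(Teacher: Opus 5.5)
Your proposal is correct and follows the paper's proof essentially step by step. You use the same integration by parts, which reduces $\partial_t E^\epsilon_{p,\delta}$ to $\int_X F_\delta'(\dot\varphi)$ times the left-hand side of Lemma~\ref{geodesic lemma}, bounded via Proposition~\ref{ddtestimate} together with the sign and cohomological invariance of $\tilde g_2(\chi_\varphi)$. You also use the same convexity $\ddot\varphi_\epsilon\ge 0$ to compare $\dot\varphi_\epsilon(0)$ and $\dot\varphi_\epsilon(1)$ with $\varphi_1-\varphi_0$ at the endpoints. The only slip is the parenthetical ``indeed smooth'': for non-integer $p$, $F_\delta$ is only $C^2$ (when $p\ge 1$), but $C^1$ is all the integration by parts requires.
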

 \begin{proof}
 	We calculate 
 	\begin{align*}
	 	&\frac{dE_{p,\delta}^{\epsilon}}{dt}=\int_{X}\frac{p\left|\frac{d\varphi_{\epsilon}}{dt}\right|^{2p-2}\frac{d\varphi_{\epsilon}}{dt}\frac{d^{2}\varphi_{\epsilon}}{dt^{2}}}{\sqrt{\left|\frac{d\varphi_{\epsilon}}{dt}\right|^{2p}+\delta^{2}}}g_{2}(\chi_{\varphi_{\epsilon}})+\int_{X}\sqrt{\left|\frac{d\varphi_{\epsilon}}{dt}\right|^{2p}+\delta^{2}}\, g_{2}'(\chi_{\varphi_{\epsilon}})\wedge\sqrt{-1}\partial\bar{\partial}\left(\frac{d\varphi_{\epsilon}}{dt}\right)\\
	 	&=\int_{X}\frac{p\left|\frac{d\varphi_{\epsilon}}{dt}\right|^{2p-2}\frac{d\varphi_{\epsilon}}{dt}\frac{d^{2}\varphi_{\epsilon}}{dt^{2}}}{\sqrt{\left|\frac{d\varphi_{\epsilon}}{dt}\right|^{2p}+\delta^{2}}}g_{2}(\chi_{\varphi_{\epsilon}})-\int_{X}\frac{p\left|\frac{d\varphi_{\epsilon}}{dt}\right|^{2p-2}\frac{d\varphi_{\epsilon}}{dt}}{\sqrt{\left|\frac{d\varphi_{\epsilon}}{dt}\right|^{2p}+\delta^{2}}}\sqrt{-1}\partial(\frac{d\varphi_{\epsilon}}{dt})\wedge \bar{\partial}(\frac{d\varphi_{\epsilon}}{dt})\wedge g_{2}'(\chi_{\varphi_{\epsilon}})\\
	 	&=\int_{X}\frac{p\left|\frac{d\varphi_{\epsilon}}{dt}\right|^{2p-2}\frac{d\varphi_{\epsilon}}{dt}}{\sqrt{\left|\frac{d\varphi_{\epsilon}}{dt}\right|^{2p}+\delta^{2}}}\left(\frac{d^{2}\varphi_{\epsilon}}{dt^{2}}g_{2}(\chi_{\varphi_{\epsilon}})-\sqrt{-1}\partial(\frac{d\varphi_{\epsilon}}{dt})\wedge \bar{\partial}(\frac{d\varphi_{\epsilon}}{dt})\wedge g_{2}'(\chi_{\varphi_{\epsilon}}) \right).
 	\end{align*}
 Using Lemma \ref{geodesic lemma} and Proposition \ref{ddtestimate}, we obtain the first assertion.
 Since $\pi_{X}^{*}\chi+\sqrt{-1}D\bar D\varphi_{\epsilon}$ is K\"ahler, the $\sqrt{-1}dz_0\wedge d\bar z_0$ part gives the estimate $\frac{d^{2}\varphi_{\epsilon}}{dt^{2}}\ge 0$. This implies that 
 \[\frac{d\varphi_{\epsilon}}{dt}(0)\le \varphi_{\epsilon}(1)-\varphi_{\epsilon}(0)=\varphi_{1}-\varphi_{0}.\]
 Then on the set $\{\varphi_{0}>\varphi_{1}\},$ 
 \[\lvert\frac{d\varphi_{\epsilon}}{dt}(0)\rvert\ge \lvert\varphi_{0}-\varphi_{1}\rvert\]
 and so
 \[E_{p,\delta}^{\epsilon}(0)\ge \int_{\{\varphi_{0}>\varphi_{1}\}}\lvert\varphi_{0}-\varphi_{1}\rvert^{p}g_{2}(\chi_{\varphi_{0}}).\]
 Similarly, we have 
 \[E_{p,\delta}^{\epsilon}(1)\ge \int_{\{\varphi_{1}>\varphi_{0}\}}\lvert\varphi_{0}-\varphi_{1}\rvert^{p}g_{2}(\chi_{\varphi_{1}}).\]
 Combining the above with $(i)$, we obtain $(ii)$.
 \end{proof}
 \begin{lem}
 	Let $\psi(s)$, $s\in[0,s_{0}]$, be a smooth path in $\mathcal{H}$, and let $\hat{\psi}$ be a fixed point in $\mathcal{H}$ such that $\hat{\psi}\notin \psi([0,s_{0}])$. For any $s\in [0,s_{0}]$, let $\varphi_{\epsilon}(s,t)$, $t\in [0,1]$, be the $\epsilon$-geodesic joining $\psi(s)$ to $\hat{\psi}$. Then there exist constants $\epsilon_0(\psi,\hat{\psi},p,\chi,\omega,X)$ and $C(\psi,\hat{\psi},p,\chi,\omega,X)$ such that
	 	\[\operatorname{length}_{p,\delta}(\varphi_{\epsilon}(0,\cdot))\le \operatorname{length}_{p,\delta}(\psi)+\operatorname{length}_{p,\delta}(\varphi_{\epsilon}(s_{0},\cdot))+C\epsilon^{2}\delta^{-3}\]
        for all $\epsilon<\epsilon_0$.
 \end{lem}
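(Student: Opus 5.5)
This is the Chen-style triangle inequality argument, following \cite{ChenKahlerMetrics} and Darvas' adaptation to $d_p$. Consider the two-parameter family $\varphi_\epsilon(s,t)$, $(s,t)\in[0,s_0]\times[0,1]$. It is smooth in $(s,t)$: each $\epsilon$-geodesic is smooth by Section 6, and it depends smoothly on the boundary data by the implicit function theorem, since the linearised operator is invertible. Define
\[
\ell(s)=\operatorname{length}_{p,\delta}(\varphi_\epsilon(s,\cdot)).
\]
The goal is to show that
\[
\ell'(s)\ge -E_{p,\delta}(\psi,s)^{1/p}-C\epsilon^2\delta^{-3}\qquad\text{for all } s\in[0,s_0].
\]
Integrating this from $0$ to $s_0$ gives the claimed inequality, since $\int_0^{s_0}E_{p,\delta}(\psi,s)^{1/p}ds=\operatorname{length}_{p,\delta}(\psi)$ after reparametrising $\psi$ to $[0,1]$.

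To compute $\ell'(s)$, write $u=\varphi_\epsilon$, $\dot u=\partial_t u$, $u_s=\partial_s u$, and let $F(x)=\sqrt{|x|^{2p}+\delta^2}$. Then
\[
\ell'(s)=\int_0^1\frac1p E_{p,\delta}^{\frac1p-1}\,\partial_s E_{p,\delta}\,dt,
\]
and
\[
\partial_s E_{p,\delta}=\int_X F'(\dot u)\,\partial_t u_s\,g_2(\chi_u)+F(\dot u)\,\sqrt{-1}\partial\bar\partial u_s\wedge g_2'(\chi_u).
\]
I will integrate by parts in $t$ in the first term and in $X$ in the second, exactly as in Section 3 and Lemma \ref{estimate lemma}. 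The $t$-derivative of $E^{1/p-1}$ also appears, but it is $O(\epsilon^2)$ by Lemma \ref{estimate lemma}(i). The outcome has three parts.

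\emph{Boundary terms.} These are $\frac1p E^{1/p-1}\int_X F'(\dot u)u_s\,g_2(\chi_u)$ evaluated at $t=0,1$. At $t=1$ they vanish because $u_s=0$ there (the endpoint $\hat\psi$ is fixed). At $t=0$ we have $u_s=\psi'(s)$, and Hölder's inequality gives
\[
\Big|\int_X F'(\dot u)\,\psi'\,g_2\Big|\le\Big(\int_X |F'|^{q}g_2\Big)^{1/q}\Big(\int_X|\psi'|^pg_2\Big)^{1/p},\qquad \frac1p+\frac1q=1.
\]
Since $|F'(x)|\le p|x|^{p-1}$, this is bounded by $p\,E_{p,\delta}^{1-1/p}\,E_p(\psi,s)^{1/p}$. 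Multiplying by $\frac1p E^{1/p-1}$ leaves $E_p(\psi,s)^{1/p}$, which is at most $E_{p,\delta}(\psi,s)^{1/p}$.

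\emph{Interior terms.} These are
\[
-\frac1p E^{\frac1p-1}\int_X u_s\Big[F''(\dot u)\big(\ddot u\,g_2-\sqrt{-1}\partial\dot u\wedge\bar\partial\dot u\wedge g_2'\big)+F'(\dot u)\big(\ddot u\, g_2-\dots\big)\Big]\text{-type expressions}.
\]
Each of them contains the geodesic operator $\ddot u\,g_2-\sqrt{-1}\partial\dot u\wedge\bar\partial\dot u\wedge g_2'$, which by Lemma \ref{geodesic lemma} equals $-4\epsilon^2e^{2t}\tilde g_2(\chi_u)$, i.e. it is $O(\epsilon^2)$ times a form of bounded total mass. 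The only term that does not have this form is the one where $\partial_X$ falls on $F'(\dot u)$, producing $F''(\dot u)\,\sqrt{-1}\partial\dot u\wedge\bar\partial\dot u$. I would combine it with the $t$-derivative of $F'(\dot u)$ so that it again groups as $F''(\dot u)\times(\text{geodesic operator})$.

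\emph{The $\delta$ dependence.} The weights appearing are $F'$, $F''$ and $E^{1/p-1}$. We have $|F''|\lesssim \delta^{-1}$, possibly $\delta^{-2}$ for $p$ near $1$. For the negative power $E^{1/p-1}$, we use $E_{p,\delta}\ge\delta\int_X g_2$, which gives at most another factor of $\delta^{-1}$. Together with the bounds on $|\dot u|$ from Proposition \ref{ddtestimate} and on $|u_s|$ from the comparison principle (below), this yields $C\epsilon^2\delta^{-3}$.

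The factor $u_s$ needs a uniform bound. Both $\varphi_\epsilon(s+h,\cdot)$ and $\varphi_\epsilon(s,\cdot)$ solve the same equation, and their boundary data differ by at most $\|\psi(s+h)-\psi(s)\|_{L^\infty}$. The maximum principle therefore gives $|u_s|\le\|\psi'\|_{L^\infty}$, independently of $\epsilon$.

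The main obstacle is the bookkeeping in the interior terms: I must make sure that every term not controlled by the geodesic equation cancels exactly. This is the analogue of Chen's computation that the first variation of length is a boundary term plus the geodesic operator. I would first carry out the computation with $E^{1/p}$ replaced by $E$ to verify the cancellation, and then reinsert the factor $E^{1/p-1}$, whose $t$-derivative is $O(\epsilon^2\delta^{-1})$ by Lemma \ref{estimate lemma}(i). The threshold $\epsilon_0$ only needs to ensure that the $\epsilon$-geodesics exist and that the constants above are uniform; $\hat\psi\notin\psi([0,s_0])$ is used only to keep $C$ uniform.
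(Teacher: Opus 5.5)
Your proposal is correct, and most of it is the paper's computation. The paper also arrives at the exact identity
\[
\partial_s E^{\epsilon}_{p,\delta}=\partial_t\int_X F'(\dot u)\,u_s\,g_2(\chi_u)+4\epsilon^2e^{2t}\int_X u_s\,F''(\dot u)\,\tilde g_2(\chi_u).
\]
It then integrates $\frac1p(E^{\epsilon}_{p,\delta})^{\frac1p-1}\partial_sE^{\epsilon}_{p,\delta}$ by parts in $t$, uses $u_s=0$ at $t=1$, and handles the $t=0$ term by H\"older exactly as you do.

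On the bookkeeping you flag as the main obstacle: the $F'$-terms cancel identically. The term coming from $\partial_t g_2(\chi_u)=g_2'(\chi_u)\wedge\sqrt{-1}\partial\bar\partial\dot u$ cancels the one produced by the second integration by parts on $X$. So there is no ``$F'\times$ geodesic operator'' term; only $F''(\dot u)$ times the geodesic operator survives.

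The genuine difference is how you control $E^{\frac1p-1}$ and $\partial_tE^{\frac1p-1}=(\frac1p-1)E^{\frac1p-2}\partial_tE$.

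\emph{The paper's route.} It uses $\hat\psi\notin\psi([0,s_0])$, Lemma~\ref{estimate lemma}(ii) and $\epsilon<\epsilon_0$ to get a lower bound for $E^{\epsilon}_{p,\delta}(s,t)$ that is independent of $\delta$. After that, the crude bound $F''\le C\delta^{-3}$ is enough.

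\emph{Your route.} You use the trivial bound $E_{p,\delta}\ge\delta\int_X g_2(\chi)$, which is valid because $\int_X g_2(\chi_u)$ is a positive cohomological constant. This makes the hypothesis $\hat\psi\notin\psi([0,s_0])$ and any smallness of $\epsilon$ beyond what existence requires superfluous. Contrary to your closing remark, your argument never uses them. It would also let one skip the truncation at the first arrival at $\varphi_1$ in the proof of Theorem~\ref{dp-distance-limit}.

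\emph{The price.} You pick up extra negative powers of $\delta$. The term $\partial_tE^{\frac1p-1}$ is only $O(\epsilon^2\delta^{\frac1p-2})$, not $O(\epsilon^2\delta^{-1})$ as you wrote. Combined with the paper's crude $F''\lesssim\delta^{-3}$, your route would give $\delta^{-4}$. What rescues the stated exponent is your sharper bound $\sup_{|v|\le C}F''(v)\lesssim\delta^{-1}$, which is correct (in fact $\lesssim\delta^{\min\{0,1-2/p\}}$). With it the total error is $C\epsilon^2\delta^{-2}$. In any case any fixed power of $\delta$ suffices downstream, since $\epsilon\to0$ before $\delta\to0$.

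One small point: $\operatorname{length}_{p,\delta}$ is not reparametrisation invariant. So $\operatorname{length}_{p,\delta}(\psi)$ should simply be read as $\int_0^{s_0}E_{p,\delta}(\psi,s)^{1/p}\,ds$, which is what both arguments integrate, rather than as a quantity obtained by reparametrising to $[0,1]$.
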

 \begin{proof}
 For a fixed $\epsilon$, using ellipticity and the implicit function theorem, we see that $\varphi_{\epsilon}(s,t)$ is differentiable in $s$. 
 Now we write
 \[l_{\delta}(s)=\operatorname{length}_{p,\delta}(\psi|_{[0,s]})\qquad\text{and}\qquad \hat{l}_{\delta}(s)=\operatorname{length}_{p,\delta}(\varphi_{\epsilon}(s,\cdot)).\]
 The lemma follows from the following inequality:
 \[\frac{dl_{\delta}(s)}{ds}+\frac{d\hat{l}_{\delta}(s)}{ds}\ge -C\epsilon^{2}\delta^{-3}.\]
 We see that 
 \begin{equation}
 	\label{l expression}
 \frac{dl_{\delta}(s)}{ds}=E_{p,\delta}^{\frac{1}{p}}(\psi,s)=\left(\int_{X}\sqrt{\left|\frac{d\psi}{ds}\right|^{2p}+\delta^{2}}\,g_{2}(\chi_{\psi})\right)^{\frac{1}{p}}
  \end{equation}
 and 

 \begin{equation}
 	 \label{hat l equality}
 \frac{d\hat{l}_{\delta}(s)}{ds}=\frac{1}{p}\int_{0}^{1}(E_{p,\delta}^{\epsilon}(s,t))^{\frac{1}{p}-1}\partial_{s}E_{p,\delta}^{\epsilon}(s,t)dt.
\end{equation}
 We now compute 
	 	 \begin{equation}
         \begin{split}
	 	&\frac{\partial E_{p,\delta}^{\epsilon}(s,t)}{\partial s}\\
	 	&=p\int_{X}\frac{\left|\frac{d\varphi_{\epsilon}}{dt}\right|^{2p-2}\frac{d\varphi_{\epsilon}}{dt}\frac{d^{2}\varphi_{\epsilon}}{dsdt}}{\sqrt{\left|\frac{d\varphi_{\epsilon}}{dt}\right|^{2p}+\delta^{2}}}g_{2}(\chi_{\varphi_\epsilon})+\int_{X}\sqrt{\left|\frac{d\varphi_{\epsilon}}{dt}\right|^{2p}+\delta^{2}}\,g_{2}'(\chi_{\varphi_\epsilon})\wedge \sqrt{-1}\partial\bar{\partial}\frac{d\varphi_{\epsilon}}{ds}\\
	 	&=p\frac{\partial}{\partial t}\left(\int_{X}\frac{\left|\frac{d\varphi_{\epsilon}}{dt}\right|^{2p-2}\frac{d\varphi_{\epsilon}}{dt}\frac{d\varphi_{\epsilon}}{ds}}{\sqrt{\left|\frac{d\varphi_{\epsilon}}{dt}\right|^{2p}+\delta^{2}}}g_{2}(\chi_{\varphi_\epsilon})\right)-p\int_{X}\frac{d\varphi_{\epsilon}}{ds}\frac{\partial}{\partial t}\left(\frac{\left|\frac{d\varphi_{\epsilon}}{dt}\right|^{2p-2}\frac{d\varphi_{\epsilon}}{dt}}{\sqrt{\left|\frac{d\varphi_{\epsilon}}{dt}\right|^{2p}+\delta^{2}}}g_{2}(\chi_{\varphi_\epsilon})\right)\\
	 	&-p\int_{X}\frac{\left|\frac{d\varphi_{\epsilon}}{dt}\right|^{2p-2}\frac{d\varphi_{\epsilon}}{dt}}{\sqrt{\left|\frac{d\varphi_{\epsilon}}{dt}\right|^{2p}+\delta^{2}}}g_{2}'(\chi_{\varphi_\epsilon})\wedge \sqrt{-1}\partial(\frac{d\varphi_{\epsilon}}{dt})\wedge\bar{\partial}(\frac{d\varphi_{\epsilon}}{ds}) \label{derivative expansion}
        \end{split}
	 \end{equation}
We calculate the second term in \eqref{derivative expansion} directly and obtain
\begin{align*}
	&-p\int_{X}\frac{d\varphi_{\epsilon}}{ds}\frac{\partial}{\partial t}\left(\frac{\left|\frac{d\varphi_{\epsilon}}{dt}\right|^{2p-2}\frac{d\varphi_{\epsilon}}{dt}}{\sqrt{\left|\frac{d\varphi_{\epsilon}}{dt}\right|^{2p}+\delta^{2}}}g_{2}(\chi_{\varphi_\epsilon})\right)\\
	&=-p\int_{X}\frac{d\varphi_{\epsilon}}{ds}\frac{(p-1)\left|\frac{d\varphi_{\epsilon}}{dt}\right|^{4p-2}+(2p-1)\delta^{2}\left|\frac{d\varphi_{\epsilon}}{dt}\right|^{2p-2}}{\left(\left|\frac{d\varphi_{\epsilon}}{dt}\right|^{2p}+\delta^{2}\right)^{\frac{3}{2}}}\frac{d^{2}\varphi_{\epsilon}}{dt^{2}}g_{2}(\chi_{\varphi_{\epsilon}})\\
	&-p\int_{X}\frac{d\varphi_{\epsilon}}{ds}\frac{\left|\frac{d\varphi_{\epsilon}}{dt}\right|^{2p-2}\frac{d\varphi_{\epsilon}}{dt}}{\sqrt{\left|\frac{d\varphi_{\epsilon}}{dt}\right|^{2p}+\delta^{2}}}g_{2}'(\chi_{\varphi_{\epsilon}})\wedge\sqrt{-1}\partial\bar{\partial}(\frac{d\varphi_{\epsilon}}{dt}).
\end{align*}
Integrating the third term in \eqref{derivative expansion} by parts, we obtain
\begin{align*}
	&-p\int_{X}\frac{\left|\frac{d\varphi_{\epsilon}}{dt}\right|^{2p-2}\frac{d\varphi_{\epsilon}}{dt}}{\sqrt{\left|\frac{d\varphi_{\epsilon}}{dt}\right|^{2p}+\delta^{2}}}g_{2}'(\chi_{\varphi_\epsilon})\wedge \sqrt{-1}\partial(\frac{d\varphi_{\epsilon}}{dt})\wedge\bar{\partial}(\frac{d\varphi_{\epsilon}}{ds})\\
	&=p\int_{X}\frac{d\varphi_{\epsilon}}{ds}\frac{(p-1)\left|\frac{d\varphi_{\epsilon}}{dt}\right|^{4p-2}+(2p-1)\delta^{2}\left|\frac{d\varphi_{\epsilon}}{dt}\right|^{2p-2}}{\left(\left|\frac{d\varphi_{\epsilon}}{dt}\right|^{2p}+\delta^{2}\right)^{\frac{3}{2}}}g_{2}'(\chi_{\varphi_{\epsilon}})\wedge\sqrt{-1}\partial(\frac{d\varphi_{\epsilon}}{dt})\wedge\bar{\partial}(\frac{d\varphi_{\epsilon}}{dt})\\
	&+p\int_{X}\frac{\left|\frac{d\varphi_{\epsilon}}{dt}\right|^{2p-2}\frac{d\varphi_{\epsilon}}{dt}\frac{d\varphi_{\epsilon}}{ds}}{\sqrt{\left|\frac{d\varphi_{\epsilon}}{dt}\right|^{2p}+\delta^{2}}}\sqrt{-1}\partial\bar{\partial}(\frac{d\varphi_{\epsilon}}{dt})\wedge g_{2}'(\chi_{\varphi_{\epsilon}}).
\end{align*}
Substituting these formulas into \eqref{derivative expansion} and using Lemma~\ref{geodesic lemma}, we obtain
\begin{align*}
			\frac{\partial E_{p,\delta}^{\epsilon}(s,t)}{\partial s}
			&=p\frac{\partial}{\partial t}\left(\int_{X}\frac{\left|\frac{d\varphi_{\epsilon}}{dt}\right|^{2p-2}\frac{d\varphi_{\epsilon}}{dt}\frac{d\varphi_{\epsilon}}{ds}}{\sqrt{\left|\frac{d\varphi_{\epsilon}}{dt}\right|^{2p}+\delta^{2}}}g_{2}(\chi_{\varphi_\epsilon})\right)\\
			&+p\int_{X}4\epsilon^{2}e^{2t}\frac{d\varphi_{\epsilon}}{ds}\frac{(p-1)\left|\frac{d\varphi_{\epsilon}}{dt}\right|^{4p-2}+(2p-1)\delta^{2}\left|\frac{d\varphi_{\epsilon}}{dt}\right|^{2p-2}}{\left(\left|\frac{d\varphi_{\epsilon}}{dt}\right|^{2p}+\delta^{2}\right)^{\frac{3}{2}}}\tilde g_2(\chi_\varphi).
\end{align*}
Applying $\partial_{s}$ to the $\epsilon$-geodesic equation and using the maximum principle, we obtain $\lvert \frac{d\varphi_{\epsilon}}{ds}\rvert\le C$. Using Proposition \ref{ddtestimate}, we obtain
\[\frac{\partial E_{p,\delta}^{\epsilon}(s,t)}{\partial s}\ge p\frac{\partial}{\partial t}\left(\int_{X}\frac{\left|\frac{d\varphi_{\epsilon}}{dt}\right|^{2p-2}\frac{d\varphi_{\epsilon}}{dt}\frac{d\varphi_{\epsilon}}{ds}}{\sqrt{\left|\frac{d\varphi_{\epsilon}}{dt}\right|^{2p}+\delta^{2}}}g_{2}(\chi_{\varphi_\epsilon})\right)- C\epsilon^{2}\delta^{-3}.\]
Observe that in the above the constant $C$ depends on the cohomological constant $\int_{X}|\tilde{g_{2}}(\chi_{\varphi})|$.
Substituting this into \eqref{hat l equality}, we obtain
\begin{align*}
	&\frac{d\hat{l}_{\delta}(s)}{ds}=\frac{1}{p}\int_{0}^{1}(E_{p,\delta}^{\epsilon}(s,t))^{\frac{1}{p}-1}\partial_{s}E_{p,\delta}^{\epsilon}(s,t)dt\\
	&\ge \int_{0}^{1}(E_{p,\delta}^{\epsilon}(s,t))^{\frac{1}{p}-1}\frac{\partial}{\partial t}\left(\int_{X}\frac{\left|\frac{d\varphi_{\epsilon}}{dt}\right|^{2p-2}\frac{d\varphi_{\epsilon}}{dt}\frac{d\varphi_{\epsilon}}{ds}}{\sqrt{\left|\frac{d\varphi_{\epsilon}}{dt}\right|^{2p}+\delta^{2}}}g_{2}(\chi_{\varphi_\epsilon})\right)dt-\int_{0}^{1}(E_{p,\delta}^{\epsilon}(s,t))^{\frac{1}{p}-1}C\epsilon^{2}\delta^{-3}dt\\
	&=\left.(E_{p,\delta}^{\epsilon}(s,t))^{\frac{1}{p}-1}\int_{X}\frac{\left|\frac{d\varphi_{\epsilon}}{dt}\right|^{2p-2}\frac{d\varphi_{\epsilon}}{dt}\frac{d\varphi_{\epsilon}}{ds}}{\sqrt{\left|\frac{d\varphi_{\epsilon}}{dt}\right|^{2p}+\delta^{2}}}g_{2}(\chi_{\varphi_\epsilon})\right|_{t=0}^{t=1}\\
	&-\int_{0}^{1}\left(\frac{\partial}{\partial t}(E_{p,\delta}^{\epsilon}(s,t))^{\frac{1}{p}-1}\right)\left(\int_{X}\frac{\left|\frac{d\varphi_{\epsilon}}{dt}\right|^{2p-2}\frac{d\varphi_{\epsilon}}{dt}\frac{d\varphi_{\epsilon}}{ds}}{\sqrt{\left|\frac{d\varphi_{\epsilon}}{dt}\right|^{2p}+\delta^{2}}}g_{2}(\chi_{\varphi_\epsilon})\right)dt-\int_{0}^{1}(E_{p,\delta}^{\epsilon}(s,t))^{\frac{1}{p}-1}C\epsilon^{2}\delta^{-3}dt.
\end{align*}
We know that $\varphi_{\epsilon}(s,0)=\psi(s)$ and $\varphi_{\epsilon}(s,1)=\hat{\psi}$. Hence, $\frac{d\varphi_{\epsilon}}{ds}(s,0)=\frac{d\psi}{ds}(s)$ and $\frac{d\varphi_{\epsilon}}{ds}(s,1)=0$. The second term and the third term can be controlled in the following way. Note that the function
\[
\max\Big\{\int_{\{\psi(s)>\hat\psi\}}|\psi(s)-\hat\psi|^p g_2(\chi_{\psi(s)}), \int_{\{\hat\psi>\psi(s)\}}|\psi(s)-\hat\psi|^p g_2(\chi_{\hat\psi})\Big\}
\]
is continuous in $s$, and it is positive because $\hat{\psi}\notin \psi([0,s_{0}])$. So it has a uniform positive lower bound. Thus, by Lemma \ref{estimate lemma}(ii), for all sufficiently small $\epsilon$, $E_{p,\delta}^{\epsilon}(s,t)$ also has a uniform positive lower bound.
Combining this with Lemma \ref{estimate lemma}(i) and the bounds on $\lvert\frac{d\varphi_\epsilon}{dt}\rvert$ and $\lvert\frac{d\varphi_\epsilon}{ds}\rvert$, we obtain
\[\frac{d\hat{l}_{\delta}(s)}{ds}\ge -\left.(E_{p,\delta}^{\epsilon}(s,0))^{\frac{1}{p}-1}\int_{X}\frac{\left|\frac{d\varphi_{\epsilon}}{dt}\right|^{2p-2}\frac{d\varphi_{\epsilon}}{dt}\frac{d\varphi_{\epsilon}}{ds}}{\sqrt{\left|\frac{d\varphi_{\epsilon}}{dt}\right|^{2p}+\delta^{2}}}g_{2}(\chi_{\varphi_\epsilon})\right|_{t=0}-C\epsilon^{2}\delta^{-3}.\]
Using H\"older's inequality, when $t=0$, we obtain
\begin{align*}
	&\int_{X}\frac{\left|\frac{d\varphi_{\epsilon}}{dt}\right|^{2p-2}\frac{d\varphi_{\epsilon}}{dt}\frac{d\varphi_{\epsilon}}{ds}}{\sqrt{\left|\frac{d\varphi_{\epsilon}}{dt}\right|^{2p}+\delta^{2}}}g_{2}(\chi_{\varphi_\epsilon})\\
	&\le \left(\int_{X}\lvert\frac{d\varphi_{\epsilon}}{ds}\rvert^{p}g_{2}(\chi_{\varphi_{\epsilon}})\right)^{\frac{1}{p}}\left(\int_{X}\left|\frac{\left|\frac{d\varphi_{\epsilon}}{dt}\right|^{2p-2}\frac{d\varphi_{\epsilon}}{dt}}{\sqrt{\left|\frac{d\varphi_{\epsilon}}{dt}\right|^{2p}+\delta^{2}}}\right|^{q}g_{2}(\chi_{\varphi_{\epsilon}})\right)^{\frac{1}{q}}\\
	&\le \left(\int_{X}\lvert\frac{d\psi}{ds}\rvert^{p}g_{2}(\chi_{\varphi_{\epsilon}})\right)^{\frac{1}{p}}\times (E_{p,\delta}^{\epsilon}(s,0))^{1-\frac{1}{p}}
\end{align*}
where $q=p/(p-1)$ when $p>1$. The last inequality follows pointwise from
\[
\left|\frac{|v|^{2p-2}v}{\sqrt{|v|^{2p}+\delta^2}}\right|^q
\le \sqrt{|v|^{2p}+\delta^2}.
\]
When $p=1$, the same conclusion follows from
\[
\frac{|v|}{\sqrt{|v|^2+\delta^2}}\le 1.
\]
Consequently,
\[\frac{d\hat{l}_{\delta}(s)}{ds}\ge -\left(\int_{X}\lvert\frac{d\psi}{ds}\rvert^{p}g_{2}(\chi_{\varphi_{\epsilon}})\right)^{\frac{1}{p}}-C\epsilon^{2}\delta^{-3}.\]
Using \eqref{l expression}, we obtain
\[\frac{dl_{\delta}(s)}{ds}+\frac{d\hat{l}_{\delta}(s)}{ds}\ge -C\epsilon^{2}\delta^{-3}.\]

 \end{proof}

 \begin{thm}
 \label{dp-distance-limit}
	 	For any $\varphi_{0},\varphi_{1}\in \mathcal{H}$, let $\varphi_{\epsilon}$ be the $\epsilon$-geodesic connecting $\varphi_{0}$ and $\varphi_{1}$. Then:
	 	\begin{enumerate}
	 	\item $d_{p}(\varphi_{0},\varphi_{1})=\lim_{\epsilon\rightarrow 0}\operatorname{length}_{p}(\varphi_{\epsilon})$;
	 	\item $d_{p}^{p}(\varphi_{0},\varphi_{1})=\lim_{\epsilon\rightarrow 0}E_{p}^{\epsilon}(t)$ for every $t\in [0,1]$;
	 	\item The following inequality holds:
\[
d_p^p(\varphi_0,\varphi_1) \ge \max\Big\{
\int_{\{\varphi_0\ge\varphi_1\}} |\varphi_0-\varphi_1|^p g_2(\chi_{\varphi_0}),\;
\int_{\{\varphi_1\ge\varphi_0\}} |\varphi_1-\varphi_0|^p g_2(\chi_{\varphi_1})
\Big\}.
\]
	 	\end{enumerate}
\end{thm}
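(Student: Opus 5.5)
The plan follows Chen's argument for the Mabuchi metric, adapted to the regularized energies $E_{p,\delta}$. The two lemmas above supply the needed pieces: Lemma~\ref{estimate lemma} gives near-constancy in $t$ and a lower bound for the energy, and the preceding lemma gives a triangle-type inequality for $\epsilon$-geodesics.

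\textbf{Step 1 (part 2 from part 1).} By Lemma~\ref{estimate lemma}(i), $|\partial_t E^\epsilon_{p,\delta}|\le C\epsilon^2$ uniformly in $\delta\in(0,1]$. Letting $\delta\to0$ with \eqref{approximate inequality} gives $|E^\epsilon_p(t)-E^\epsilon_p(t')|\le C\epsilon^2$. Hence $\operatorname{length}_p(\varphi_\epsilon)^p$ and $E^\epsilon_p(t)$ differ by $O(\epsilon^2)$ for every $t$, uniformly in $t$. This uses the elementary fact that $\int_0^1 E^{1/p}\,dt$ is squeezed between $\min E^{1/p}$ and $\max E^{1/p}$, and that $x\mapsto x^{1/p}$ is uniformly continuous on bounded sets; $E$ is bounded by Proposition~\ref{ddtestimate}. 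So once part 1 is proved, part 2 follows.

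\textbf{Step 2 (part 1).} The easy inequality comes first. Each $\varphi_\epsilon$ is a smooth path in $\mathcal H$ by Corollary~\ref{restriction}, so $d_p\le\liminf_{\epsilon\to0}\operatorname{length}_p(\varphi_\epsilon)$.

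For the reverse inequality, fix any smooth path $\psi(s)$, $s\in[0,1]$, from $\varphi_0$ to $\varphi_1$. I want to show $\limsup_{\epsilon\to0}\operatorname{length}_p(\varphi_\epsilon)\le\operatorname{length}_p(\psi)$. I will apply the preceding lemma with $\hat\psi=\varphi_1$ and the path $\psi|_{[0,s_0]}$, where $s_0<1$ is chosen so that $\varphi_1\notin\psi([0,s_0])$. If $\psi$ hits $\varphi_1$ earlier, I truncate at the first hitting time and the argument is unchanged. The lemma then gives
\[
\operatorname{length}_{p,\delta}(\varphi_\epsilon)\le \operatorname{length}_{p,\delta}(\psi|_{[0,s_0]})+\operatorname{length}_{p,\delta}(\varphi_\epsilon(s_0,\cdot))+C\epsilon^2\delta^{-3}.
\]
The constant $C$ depends on $s_0$, through the positive lower bound of the max-expression in that proof.

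Next I estimate the last length term. The $\epsilon$-geodesic from $\psi(s_0)$ to $\varphi_1$ has speed $|\dot\varphi_\epsilon|\le C(\|\psi(s_0)-\varphi_1\|_{L^\infty}+\epsilon^2)$ by Proposition~\ref{ddtestimate}. Here $C$ is uniform, because $\psi(s_0)$ ranges over a compact family in $\mathcal H$ with uniform eigenvalue bounds and a uniform positive lower bound for $g_2$. Also $\int_X g_2(\chi_\varphi)$ is a cohomological constant. Together with \eqref{approximate inequality}, this bounds the last term by $C\|\psi(s_0)-\varphi_1\|_{L^\infty}+C\epsilon^2+C\delta^{1/p}$.

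\textbf{Step 3 (order of limits).} I take limits in the order $\epsilon\to0$ first, then $\delta\to0$, then $s_0\to1$. Using \eqref{approximate inequality} on the left-hand side as well, this yields $\limsup_{\epsilon\to0}\operatorname{length}_p(\varphi_\epsilon)\le\operatorname{length}_p(\psi)$. Taking the infimum over $\psi$ gives part 1; piecewise smooth paths are handled by smoothing or by concatenation.

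This limit bookkeeping is the main obstacle. The constant in $C\epsilon^2\delta^{-3}$ degenerates as $s_0\to1$, so the order $\epsilon\to0$, then $\delta\to0$, then $s_0\to1$ is forced. One must also check that the $L^\infty$ speed bound on the tail geodesic is uniform in $s_0$. If that uniformity fails, I would instead replace the tail by the explicit affine path from $\psi(s_0)$ to $\varphi_1$, which stays in $\mathcal H$ by convexity.

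\textbf{Step 4 (part 3).} By part 2, $d_p^p=\lim_{\epsilon\to0}E^\epsilon_p(t)$ for any $t$. Lemma~\ref{estimate lemma}(ii), after letting $\delta\to0$ and then $\epsilon\to0$, gives the bound with strict sets $\{\varphi_0>\varphi_1\}$. On $\{\varphi_0=\varphi_1\}$ the integrand vanishes, so this is the stated inequality with non-strict sets.
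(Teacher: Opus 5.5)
Your proposal is correct and follows the paper's proof essentially step by step. The matching steps are: the easy inequality from the definition of $d_p$; the comparison lemma applied to $\psi|_{[0,s_0]}$ with $\hat\psi=\varphi_1$; control of the tail $\epsilon$-geodesic by Proposition~\ref{ddtestimate}; limits taken in the order $\epsilon\to0$, $\delta\to0$, $s_0\to1$; and parts 2 and 3 deduced from Lemma~\ref{estimate lemma} together with \eqref{approximate inequality}.

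The one thing to add is the degenerate case $\varphi_0=\varphi_1$, where no $s_0$ with $\varphi_1\notin\psi([0,s_0])$ exists. The paper treats it separately: Proposition~\ref{ddtestimate} gives $\lvert\frac{d\varphi_\epsilon}{dt}\rvert\le C\epsilon^2$, which makes all three assertions immediate.
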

\begin{proof}
We first prove (i). If $\varphi_0=\varphi_1$, then $d_{p}(\varphi_{0},\varphi_{1})=0$ by taking $\varphi_t=\varphi_0=\varphi_1$, and $\lim_{\epsilon\rightarrow 0}\operatorname{length}_{p}(\varphi_{\epsilon})=0$ by Proposition \ref{ddtestimate}. So we assume that $\varphi_0\not=\varphi_1$.
By definition,
\[d_{p}(\varphi_{0},\varphi_{1})\le \liminf_{\epsilon\rightarrow 0}\operatorname{length}_{p}(\varphi_{\epsilon}).\]
It therefore remains to prove
\[\limsup_{\epsilon\rightarrow 0}\operatorname{length}_{p}(\varphi_{\epsilon})\le d_{p}(\varphi_{0},\varphi_{1}).\]
Let $\psi(s)$, $s\in [0,1]$, be any smooth path connecting $\varphi_{0}$ and $\varphi_{1}$. After removing any portion of the path following its first arrival at $\varphi_1$, we may assume that $\varphi_{1}\notin \psi([0,1))$. For each $s\in [0,1)$, let $\varphi_{\epsilon}(s,t)$ be the $\epsilon$-geodesic connecting $\psi(s)$ to $\varphi_{1}$. For any $s_{0}\in (0,1)$, the preceding lemma gives
		\[\operatorname{length}_{p,\delta}(\varphi_{\epsilon}(0,\cdot))\le \operatorname{length}_{p,\delta}(\psi|_{[0,s_{0}]})+\operatorname{length}_{p,\delta}(\varphi_{\epsilon}(s_{0},\cdot))+C\epsilon^{2}\delta^{-3}.\]
		Using \eqref{approximate inequality} and $\varphi_{\epsilon}(0,\cdot)=\varphi_{\epsilon}$, we obtain
		\[\operatorname{length}_{p}(\varphi_{\epsilon})\le \operatorname{length}_{p}(\psi|_{[0,s_{0}]})+\operatorname{length}_{p}(\varphi_{\epsilon}(s_{0},\cdot))+C\delta^{\frac{1}{p}}+C\epsilon^{2}\delta^{-3}.\]
		Since Proposition \ref{ddtestimate} gives $\lvert\frac{d\varphi_{\epsilon}}{dt}\rvert\le C\|\psi(s_0)-\varphi_{1}\|_{L^{\infty}}+C\epsilon^{2}$, we obtain
		\[\operatorname{length}_{p}(\varphi_{\epsilon}(s_{0},\cdot))\le C\|\psi(s_{0})-\psi(1)\|_{L^{\infty}}+C\epsilon^{2}.\]
		Hence,
		\[\limsup_{\epsilon\rightarrow 0}\operatorname{length}_{p}(\varphi_{\epsilon})\le \operatorname{length}_{p}(\psi|_{[0,s_{0}]})+C\|\psi(s_{0})-\psi(1)\|_{L^{\infty}}+C\delta^{\frac{1}{p}}.\]
		Letting $\delta\rightarrow 0$ and then $s_{0}\rightarrow 1$, we obtain
		\[\limsup_{\epsilon\rightarrow 0}\operatorname{length}_{p}(\varphi_{\epsilon})\le \operatorname{length}_{p}(\psi).\]
		Since $\psi$ is an arbitrary path connecting $\varphi_{0}$ and $\varphi_{1}$,
		\[\limsup_{\epsilon\rightarrow 0}\operatorname{length}_{p}(\varphi_{\epsilon})\le d_{p}(\varphi_{0},\varphi_{1}).\]
		This proves (i). Assertion (ii) follows from (i), Lemma~\ref{estimate lemma}, and \eqref{approximate inequality}, after letting $\delta\to0$. Assertion (iii) follows from Lemma~\ref{estimate lemma} by letting $\epsilon\to0$ and $\delta\to0$.
\end{proof}

As a corollary, we prove that $(\mathcal{H},d_p)$ is a metric space.
\begin{cor}
$(\mathcal{H},d_p)$ is a metric space.
\end{cor}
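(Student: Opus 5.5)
We check the metric axioms for $d_p$ on $\mathcal{H}$; the only nontrivial one is nondegeneracy, which will come from Theorem~\ref{dp-distance-limit}(iii).

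\textbf{Finiteness, symmetry and the triangle inequality.} The value $d_p(\varphi_0,\varphi_1)$ is finite. Indeed, $\mathcal{H}$ is convex by Proposition~\ref{Convexity}, so the linear path $(1-t)\varphi_0+t\varphi_1$ is admissible and has finite length. The quantity $d_p$ is also nonnegative. This uses $g_2(\chi_\varphi)>0$ pointwise on $\mathcal{H}$, which holds because $\lambda(\omega^{-1}\chi_\varphi)\in\Upsilon_{g_2}\subset\{g_2>0\}$; hence every $E_p(\varphi,t)\ge 0$. Symmetry follows from reparametrizing a path by $t\mapsto 1-t$, which preserves $\operatorname{length}_p$. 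For the triangle inequality we concatenate near-optimal paths from $\varphi_0$ to $\varphi_1$ and from $\varphi_1$ to $\varphi_2$. After a linear rescaling of time the result is a piecewise smooth path whose length is the sum of the two lengths. Since $d_p$ is defined as an infimum over piecewise smooth paths, this gives $d_p(\varphi_0,\varphi_2)\le d_p(\varphi_0,\varphi_1)+d_p(\varphi_1,\varphi_2)$.

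\textbf{Nondegeneracy.} Suppose $d_p(\varphi_0,\varphi_1)=0$. Theorem~\ref{dp-distance-limit}(iii) then forces
\[
\int_{\{\varphi_0\ge\varphi_1\}}|\varphi_0-\varphi_1|^p g_2(\chi_{\varphi_0})=0
\quad\text{and}\quad
\int_{\{\varphi_1\ge\varphi_0\}}|\varphi_1-\varphi_0|^p g_2(\chi_{\varphi_1})=0 .
\]
The forms $g_2(\chi_{\varphi_i})$ are smooth volume forms with strictly positive density, and the integrands are continuous and nonnegative. Therefore $\varphi_0-\varphi_1$ vanishes on the open set $\{\varphi_0>\varphi_1\}$, so that set is empty. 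The same argument shows $\{\varphi_1>\varphi_0\}$ is empty. Hence $\varphi_0=\varphi_1$.

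\textbf{Main obstacle.} The only delicate input is Theorem~\ref{dp-distance-limit}(iii), and that result is already established. What remains to check is that $g_2(\chi_{\varphi_i})$ has a positive density everywhere, not merely almost everywhere, so that vanishing integrals force pointwise vanishing. This follows directly from the definition of $\mathcal{H}$, since $\lambda(\omega^{-1}\chi_{\varphi_i})\in\Upsilon_{g_2}\subset\{g_2>0\}$ at every point.
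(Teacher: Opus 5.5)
Your proposal is correct and follows essentially the same route as the paper. Symmetry and the triangle inequality come from reversed and concatenated paths, and nondegeneracy comes from Theorem~\ref{dp-distance-limit}(3). You also supply two details the paper leaves implicit: finiteness of $d_p$ via the convexity of $\mathcal{H}$, and the pointwise positivity of the density of $g_2(\chi_{\varphi_i})$, which turns the vanishing integrals into $\varphi_0=\varphi_1$ everywhere.
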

\begin{proof}
The triangle inequality, symmetry and positivity are standard. Since $\mathcal{H}$ is path connected, the reversed path proves symmetry and the glued path proves the triangle inequality. The only non-trivial thing is that $d_p(\varphi_0,\varphi_1)=0$ implies that $\varphi_0=\varphi_1$. This is true by Theorem \ref{dp-distance-limit} (3).
\end{proof}
Then we can define the $\mathcal{E}^p$ space as the completion of $(\mathcal{H},d_p)$. For $f_{2}(x)=x^{n}$, see \cite{DarvasMabuchiCompletion} for more details.

\section{Examples}

The classical example \cite{ChenMabuchiEnergy} is $f_1(x)=x^n-cx^{n-1}$, $c>0$ and $f_2(x)=x^n$, which says that the $J$-functional is convex along classical geodesics. For the hypercritical LYZ equation, the Collins-Yau theory \cite{CollinsYauGeodesics} corresponds to
\[
        f_1(x)=\Im(e^{-\sqrt{-1}\hat\theta}(1+\sqrt{-1}x)^n), \quad f_2(x)=\Re(e^{-\sqrt{-1}\hat\theta}(1+\sqrt{-1}x)^n).
\]
If $\hat\theta$ is in the supercritical phase $(\frac{n-2}{2}\pi, \frac{n}{2}\pi)$, then for any $\frac{n-2}{2}\pi<\theta_0<\hat\theta<\frac{n}{2}\pi$, the polynomials
\[
        f_1(x)=\Im(e^{-\sqrt{-1}\hat\theta}(1+\sqrt{-1}x)^n), \quad f_2(x)=\Im(e^{-\sqrt{-1}\theta_0}(1+\sqrt{-1}x)^n)
\]
also satisfy all of our conditions. This reduces to the Collins-Yau case if $\hat\theta$ is in the hypercritical phase $(\frac{n-1}{2}\pi, \frac{n}{2}\pi)$ and $\theta_0=\hat\theta-\frac{\pi}{2}$.

Another case is that $f_1(x)=x^n-c_kx^k$, $0\leq k\leq n-1$, $c_k>0$ and $f_2(x)=x^n$. In this case, the geodesic is the classical one and Fang-Lai-Ma \cite{FangLaiMaFlows} have studied the functional and its gradient flow.

\bibliographystyle{plain}
\bibliography{reference}

\begin{center}
\begin{minipage}{0.88\textwidth}
\small
\textbf{Gao Chen}\\[0.35em]
University of Science and Technology of China\\
No.\ 96 Jinzhai Road, Baohe District, Hefei, Anhui 230000, P.\,R.\ China\\[0.35em]
\href{mailto:chengao1@ustc.edu.cn}{\texttt{chengao1@ustc.edu.cn}}\\
\textbf{Kartick Ghosh}\\[0.35em]
University of Science and Technology of China\\
No.\ 96 Jinzhai Road, Baohe District, Hefei, Anhui 230000, P.\,R.\ China\\[0.35em]
\href{mailto:kghosh@ustc.edu.cn}{\texttt{kghosh@ustc.edu.cn}}\\
\textbf{Sijie Nie}\\[0.35em]
University of Science and Technology of China\\
No.\ 96 Jinzhai Road, Baohe District, Hefei, Anhui 230000, P.\,R.\ China\\[0.35em]
\href{mailto:niesijie@mail.ustc.edu.cn}{\texttt{niesijie@mail.ustc.edu.cn}}
\end{minipage}
\end{center}

\end{document}